\theoremstyle{plain}
\newtheorem*{bigtheo}{Theorem}
\newtheorem{theo}{Theorem}[section]
\newtheorem{prop}[theo]{Proposition}
\newtheorem{hypo}[theo]{Hypothesis}
\newtheorem{lemm}[theo]{Lemma}
\newtheorem{exa}[theo]{Example}
\newtheorem{coro}[theo]{Corollary}
\theoremstyle{definition} 
\newtheorem{defi}[theo]{Definition}
\theoremstyle{remark}
\newtheorem{rema}[theo]{Remark}
\title{Analytic continuation on Shimura varieties with $\mu$-ordinary locus}
\author{St\'ephane Bijakowski}
\address{Institut Mathématique de Jussieu, Université Paris 6, 4 place Jussieu, 75005 Paris France}
\email{stephane.bijakowski@orange.fr}
\keywords{Shimura variety, overconvergent modular forms, $\mu$-ordinary locus, canonical subgroups}
\subjclass[2010]{11G18,11F55 (primary), 14G35 (secondary)}
\begin{document}

\begin{abstract}
We study the geometry of unitary Shimura varieties without assuming the existence of an ordinary locus. We prove, by a simple argument, the existence of canonical subgroups on a strict neighborhood of the $\mu$-ordinary locus (with an explicit bound). We then define the overconvergent modular forms (of classical weight), as well as the relevant Hecke operators. Finally, we show how an analytic continuation argument can be adapted to this case to prove a classicality theorem, namely that an overconvergent modular form which is an eigenform for the Hecke operators is classical under certain assumptions. 
\end{abstract}

\maketitle

\tableofcontents

\section*{Introduction}
$ $\\
\indent A modular form is defined as a global section of a certain sheaf on the modular curve. To study congruences between modular forms, one is led to introducing new objects, namely $p$-adic and overconvergent modular forms. These are sections of a sheaf respectively on the ordinary locus of the modular curve, and on a strict neighborhood of the ordinary locus. A lot of work has been done using these objects, one can for example construct families of overconvergent modular forms. \\
\indent It is possible to generalize the definition of $p$-adic and overconvergent modular forms to other varieties. One can for example consider the Hilbert modular variety, or the Siegel variety. The natural definition for an overconvergent modular form is a section of a certain sheaf on a strict neighborhood of the ordinary locus. In greater generality, one can consider Shimura varieties with a non-empty ordinary locus. \\
$ $\\
\indent But for Shimura varieties without ordinary locus, this definition fails. Recall that for Shimura varieties of PEL type, the criterion for the existence of an ordinary locus at $p$ is that $p$ splits completely in its reflex field. For Shimura varieties of PEL type $(C)$ (associated to symplectic groups), the reflex field is $\mathbb{Q}$, so there is always an ordinary locus. But if one considers Shimura varieties of type $(A)$ (associated to unitary groups), then the reflex field may not be $\mathbb{Q}$ and the ordinary locus may be empty. \\
Let us look at an example. Let $F$ be a CM field with totally real subfield $F_0$, and consider the special fiber of the Shimura variety associated to an unitary group with signature $(a_\sigma,b_\sigma)$ at each real place $\sigma$ of $F_0$. Suppose for simplicity that $p$ is a prime number inert in $F_0$, and which splits in $\pi^+ \pi^-$ in $F$. The choice of $\pi^+$ gives an order for the elements of the couple $(a_\sigma,b_\sigma)$ for each real place $\sigma$ of $F_0$. The existence of the ordinary locus at $p$ is then equivalent to the fact that there exist integers $a$ and $b$ such that $a_\sigma = a$ and $b_\sigma = b$ for each real place $\sigma$. The structure of the $p$-torsion of the abelian variety is then well known on the ordinary locus. In our setting, the $p$-divisible group $A[p^\infty]$ splits in $A[(\pi^+)^\infty] \oplus A[(\pi^-)^\infty]$, and $A[(\pi^-)^\infty]$ is the dual of $A[(\pi^+)^\infty]$. On the ordinary locus, the $p$-divisible group $A[(\pi^+)^\infty]$ is an extension of a multiplicative part of height $da$, and an \'etale part of height $db$ ($d$ is the degree of $F_0$ over $\mathbb{Q}$). In particular, there exists a multiplicative subgroup $H_a \subset A[\pi^+]$ of height $da$. Actually, this property characterizes the ordinary locus : if there exists a multiplicative subgroup $H_a$ of height $ha$, then the abelian variety is ordinary at $p$. On the rigid space associated to the Shimura variety (i.e. the generic fiber of its formal completion along its special fiber), then one can define the ordinary locus. There is still a multiplicative subgroup of $A[\pi^+]$ of height $da$ on this locus, and work of Fargues (\cite{Fa2}) shows that this subgroup extends to a canonical subgroup on a strict neighborhood of the ordinary locus. \\
$ $\\
\indent If the $(a_\sigma)$ are not equal to a certain integer $a$, then the ordinary locus is empty. There is always a special locus in the special fiber of the Shimura variety, called the $\mu$-ordinary locus, but the situation is more involved. Suppose we are in the same setting as before, and let us order the elements $a_1 \leq a_2 \leq \dots \leq a_d$. Then the $\mu$-ordinary locus is characterized by the fact that the $p$-divisible group $A[(\pi^+)^\infty]$ has a filtration $0 \subset X_1 \subset X_2 \subset \dots \subset X_{d+1} = A[(\pi^+)^\infty]$, such that $X_{i+1} / X_i$ is a $p$-divisible group of height $d(a_{i+1} - a_i)$ with its structure explicitly described (by convention we set $a_0 = 0$ and $a_{d+1}=a_d+b_d$). The fact that $A$ is $\mu$-ordinary is then also equivalent to the existence of subgroups $0 \subset H_{a_1} \subset \dots H_{a_d} \subset A[\pi^+]$, with $H_{a_i}$ of height $d a_i$, and the structure of $H_{a_{i+1}} / H_{a_i}$ explicitly described. \\
If one looks at the rigid space of the Shimura variety, then one can define the $\mu$-ordinary locus. There are several canonical subgroups in $A[\pi^+]$ on this locus. We lack a good theory for these canonical subgroups, which should be analogous to the one dealt by Fargues in \cite{Fa2}. However, by simple arguments, one can prove the following fact.

\begin{bigtheo}
On a strict neighborhood of the $\mu$-ordinary locus, there exist canonical subgroups $H_{a_1} \subset \dots \subset H_{a_d}$ in $A[\pi^+]$. These subgroups are characterized by the fact their degree (in the sense of \cite{Fa}) is maximal among the subgroups of the same height of $A[\pi^+]$.
\end{bigtheo}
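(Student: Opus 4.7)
The plan is to combine Fargues' degree function on finite flat commutative group schemes (\cite{Fa}) with the explicit description of $A[(\pi^+)^\infty]$ on the $\mu$-ordinary locus. On that locus the canonical filtration $0 \subset X_1 \subset \dots \subset X_{d+1} = A[(\pi^+)^\infty]$ has graded pieces of prescribed isoclinic type, and intersecting it with $A[\pi^+]$ produces natural subgroups $H_{a_i}^\mu$ of height $d a_i$. Using additivity of degree in short exact sequences together with the known structure of each $X_{i+1}/X_i$, one computes $\deg(H_{a_i}^\mu)$ and identifies it with the theoretical upper bound $D(d a_i)$, where $D$ is the piecewise-linear concave function on $[0,\mathrm{ht}(A[\pi^+])]$ attached to the $\mu$-ordinary Hodge polygon, with breakpoints exactly at the heights $d a_i$. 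It is this maximality on the $\mu$-ordinary locus that will single out the canonical subgroups.

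For uniqueness on an admissible open, suppose $H,H' \subset A[\pi^+]$ are finite flat subgroups of height $d a_i$ with $\deg(H) = \deg(H') = D(d a_i)$. Fargues' additivity gives $\deg(H+H') + \deg(H \cap H') = 2 D(d a_i)$, while the universal inequality $\deg(G) \leq D(\mathrm{ht}(G))$ yields $D(\mathrm{ht}(H+H')) + D(\mathrm{ht}(H \cap H')) \geq 2 D(d a_i)$. Since $\mathrm{ht}(H+H') + \mathrm{ht}(H \cap H') = 2 d a_i$ and $D$ is strictly concave at its breakpoint $d a_i$, this forces $\mathrm{ht}(H+H') = \mathrm{ht}(H \cap H') = d a_i$, whence $H = H'$. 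The same sum/intersection argument applied to maximal subgroups of distinct heights $d a_i < d a_j$ will show that they are automatically nested, producing the full chain $H_{a_1} \subset \dots \subset H_{a_d}$.

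For existence on a strict neighborhood of the $\mu$-ordinary locus, one studies the rigid analytic moduli of finite flat subgroups of $A[\pi^+]$ of a fixed height, which is finite over the base, and on which the degree is a continuous $\mathbb{Q}$-valued function. On the $\mu$-ordinary locus $H_{a_i}^\mu$ is an isolated global maximum of $\deg$, so by continuity the maximum is still attained by a unique section on a small neighborhood, which must then specialize to $H_{a_i}^\mu$. The main obstacle is to make this neighborhood explicit and uniform in $i$: this calls for Hasse-type invariants quantifying the gap between the current Hodge polygon and the $\mu$-ordinary one, together with an effective translation of bounds on their valuations into a quantitative gap $D(d a_i) - \deg(G)$ for any competing subgroup $G$ of the same height. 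Carrying this through, while simultaneously verifying that the $H_{a_i}$ assemble into finite flat subgroup schemes varying in flat families over the strict neighborhood, is the technical heart of the argument.
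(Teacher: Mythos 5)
Your uniqueness and nesting arguments are essentially the paper's: both rest on the submodularity inequality $\deg H + \deg H' \leq \deg(H+H') + \deg(H\cap H')$ combined with the partial-degree bound $\deg_\sigma G \leq \min(a_\sigma,h)$ for an $O_F$-stable subgroup of height $fh$, and the strict concavity of the resulting envelope at the heights $fa_i$ is exactly what the computations in Propositions~\ref{group_A} and~\ref{incl_A} exploit. Repackaging those bounds as a single concave function $D$ attached to the $\mu$-ordinary Hodge polygon is a harmless reformulation (two small slips: the first display should be an inequality, not an equality, and the universal bound $\deg G \leq D(\mathrm{ht}\,G)$ needs the partial-degree theory for $O_F$-stable subgroups rather than bare Fargues degree). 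Likewise your computation of $\deg H_{a_i}^\mu$ on the $\mu$-ordinary locus via Moonen's filtration and additivity is exactly Corollary~\ref{coro_A}.

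The existence step, however, has a genuine gap, and you acknowledge it yourself: you propose to deduce existence on a strict neighborhood by a continuity argument on the moduli of subgroups plus Hasse-type invariants measuring the distance to the $\mu$-ordinary polygon, and you call constructing those invariants (and verifying flatness in families) ``the technical heart of the argument.'' That machinery is precisely what the paper avoids, and avoiding it is the entire point of the claimed ``simple argument.'' The paper's move is to pass to the Iwahori level from the start: the variety $X_{Iw}$ parametrizes the flag of subgroups tautologically, and the degree functions $\mathrm{Deg}_k$ are defined on $X_{Iw,rig}$. For $0<\varepsilon<1/2$, one sets $X_\varepsilon = \bigcap_k \mathrm{Deg}_k^{-1}([d_k-\varepsilon,d_k])$; since the projection $X_{Iw,rig}\to X_{rig}$ is finite \'etale and the $X_\varepsilon$ are strict neighborhoods of $X_0$, the images form a basis of strict neighborhoods of the $\mu$-ordinary locus (this is \cite{Bi1} Prop.~4.1.7, cited in the paper). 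On such an image, a subgroup of height $fa_i$ and degree $>d_i-1/2$ exists because it was given as part of the Iwahori data, and is unique by Proposition~\ref{group_A}; there is no continuity or flat-family verification to carry out, since existence and flatness are built into the moduli interpretation of $X_{Iw}$. You should replace the continuity/Hasse-invariant sketch by this passage to $X_{Iw}$; as written, your existence step does not close.
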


The proof is actually very simple : let us consider $X_{Iw}$ the variety with Iwahori level at $p$, and denote $f : X_{Iw} \to X$ the projection. The $\mu$-ordinary locus is the image by $f$ of the locus where the subgroups $H_{a_i}$ are of maximal degree. Let $0 < \varepsilon < 1/2$, and consider the locus in $X_{Iw}$ where the degree of $H_{a_i}$ is bigger than the maximal degree minus $\varepsilon$. The image by $f$ of this locus is a strict neighborhood of the $\mu$-ordinary locus. The existence of canonical subgroups follows from the definition. Their uniqueness is a simple computation using the properties of the degree function (see the proposition $\ref{group_A}$). \\
$ $\\
\indent If we consider the space $X_{Iw}$, then one can call the locus where the degree of each $H_{a_i}$ is maximal the $\mu$-ordinary-multiplicative locus. It then makes sense to define an overconvergent modular form as the section of a certain sheaf on a strict neighborhood of the $\mu$-ordinary-multiplicative locus. \\
The Hecke algebra at $p$ acts both on the rigid space and on the space of modular forms. In the case of existence of the ordinary locus, there is one relevant Hecke operator, parametrizing complements of the canonical subgroup. In the general case, there will be as many relevant Hecke operators as the number of canonical subgroups. We will denote by $U_{p,a_i}$ these Hecke operators. One can show that these operators increase the degrees of all the subgroups of $A[\pi^+]$, then act on the space of overconvergent modular forms. \\
We can now state the main result of the paper, namely that an overconvergent  modular form, which is an eigenform for the Hecke operaotrs $U_{p,a_i}$ can be analytically continued to the whole variety under a certain assumption, and thus is classical. Let $\kappa$ be a weight ; explicitly, it is a collection of integers $(\kappa_{i,1} \geq \dots \geq \kappa_{i,a_i} , \lambda_{i,a} \geq \dots \geq \lambda_{i,b_i})_{1 \leq i \leq d}$. Let $S = \{ a_1, \dots, a_d \} \cap [1,a_d+b_d-1]$. The cardinality of $S$ is exactly the number of canonical subgroups. Let us note $\Sigma_i = \{j, a_j = a_i \}$ for all $i$.

\begin{bigtheo} [Theorem \ref{final_theo}] 
Let $f$ be an overconvergent modular form of weight $\kappa$. Suppose that $f$ is an eigenform for the Hecke operators $U_{p,a_i}$, with eigenvalue $\alpha_i$ for $a_i \in S$, and that we have the relations
$$n_{i} + v(\alpha_i) < \inf_{j \in \Sigma_i} (\kappa_{j,a_j} + \lambda_{j,b_j})$$
Then $f$ is classical.
\end{bigtheo}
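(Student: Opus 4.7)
My plan is to follow the analytic continuation strategy pioneered by Buzzard--Taylor and systematized by Kassaei: extend $f$ from its initial domain (a strict neighborhood of the $\mu$-ordinary-multiplicative locus in $X_{Iw}$) to the whole of $X_{Iw}$, at which point the form becomes a global section of the automorphic sheaf and hence descends to a classical modular form on $X$. The novelty compared with the Siegel or Hilbert situation is that there is no single canonical subgroup but a chain $H_{a_1}\subset\dots\subset H_{a_d}$, and correspondingly several Hecke operators $U_{p,a_i}$; consequently the extension procedure must be broken up according to a stratification of $X_{Iw}$ by the degree profile of the $H_{a_i}$.

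First I would stratify $X_{Iw}$ by which of the canonical subgroups $H_{a_i}$ have maximal degree and which do not; the $\mu$-ordinary-multiplicative locus is the deepest stratum. As recalled in the introduction, each operator $U_{p,a_{i}}$ has the effect of increasing the degree of $H_{a_i}$ along its correspondence points. For a point $x$ in a stratum where some $H_{a_i}$ is deficient, I would apply the eigenfunction relation
\[
f(x) = \alpha_i^{-n}\,(U_{p,a_i}^n f)(x)
\]
and expand $U_{p,a_i}^n$ as a sum over length-$n$ chains of correspondence points. The goal is to show that, after iterating a well-chosen sequence of the $U_{p,a_{i_k}}$, all but a negligible set of the chain endpoints fall into the initial neighborhood, where $f$ is already defined.

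Second, to control the resulting series, I would use the slope inequality $n_i+v(\alpha_i)<\inf_{j\in\Sigma_i}(\kappa_{j,a_j}+\lambda_{j,b_j})$. Each application of $U_{p,a_i}$ to a section of weight $\kappa$ contributes a normalizing factor whose valuation is at least $\inf_{j\in\Sigma_i}(\kappa_{j,a_j}+\lambda_{j,b_j})$ on the relevant correspondence points, while dividing by $\alpha_i^n$ costs $n\,v(\alpha_i)$; the constant $n_i$ measures how far the starting stratum is from the $\mu$-ordinary-multiplicative locus. Under the stated inequality the terms go to zero $p$-adically and the series converges uniformly on each stratum, defining an extension of $f$ there.

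Finally, one has to glue the extensions obtained on the various strata. The main obstacle will be the combinatorial bookkeeping involved in this step: with a chain of $|S|$ canonical subgroups rather than a single one, one must choose a coherent order in which the various $U_{p,a_i}$ are iterated, and verify on every overlap that two extensions — coming from applying different sequences of Hecke operators — actually agree. I expect this to be handled by a Kassaei-type argument, using that on the overlaps each extension is characterized by the same system of eigenfunction equations, together with a density statement ensuring unicity of the analytic continuation. Once all the local extensions are glued, $f$ becomes a global section of the automorphic sheaf on $X_{Iw}$, and classicality follows by descent to $X$.
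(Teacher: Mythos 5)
Your broad outline — stratify $X_{Iw,rig}$ by degree profile, iterate the $U_{p,a_i}$, control the resulting series via the slope hypothesis, then invoke Koecher plus GAGA — is the same framework as the paper's. But several essential pieces of the mechanism are missing or misstated, and without them the extension step does not go through. First, $n_i$ is not ``how far the starting stratum is from the $\mu$-ordinary-multiplicative locus'': it is a fixed constant coming from the inseparability degree of the projection $p_1$ in the Hecke correspondence, independent of which stratum one starts in. Second, it is simply false that applying $U_{p,a_i}^N$ to an arbitrary point $x$ with small $\mathrm{Deg}_i$ sends all (or even ``all but a negligible set'' of) endpoints back into the initial neighborhood: near the deficient locus, some branches of the correspondence stay deficient no matter how many times you iterate. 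The actual argument decomposes $U_i^N$ on suitable opens into a ``good'' part (landing where $\mathrm{Deg}_i$ is large, so $f$ is already defined) and a ``bad'' part (landing where it is not); one then neglects the bad part, producing approximations $f_N$ modulo $p^{A_N}$, and the slope inequality is used precisely to show $\|\alpha_i^{-1}U_i^{\mathrm{bad}}\|<1$ so that $A_N\to\infty$ and the $f_N$ glue by a Kassaei-type lemma. You cannot bypass this decomposition: the normalizing gain is not available ``on each application'' of $U_{p,a_i}$ but only on the bad branches, and it is bounded below by $\inf_{\sigma\in S_2}(\kappa_\sigma+\lambda_\sigma)$ for a set $S_2\supsetneq\Sigma_i$ that grows at each step of the continuation.

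This last point explains why the ordering of the operators matters, and it is for a different reason than the consistency-on-overlaps issue you raise. There is no genuine compatibility problem to check between different extension routes (uniqueness of analytic continuation on a connected rigid space takes care of that); what does matter is that when you run the argument for $U_{i_k}$ after having already opened up the strata for $i_1,\dots,i_{k-1}$, the bound on the bad operator involves $\inf_{\sigma\in S_2}(\kappa_\sigma+\lambda_\sigma)$ with $S_2\supseteq\Sigma_{i_1}\cup\dots\cup\Sigma_{i_k}$, whereas the hypothesis you are given only bounds $n_{a_{i_k}}+v(\alpha_{i_k})$ by $\inf_{\sigma\in\Sigma_{i_k}}(\kappa_\sigma+\lambda_\sigma)$. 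To make the hypothesis suffice, one must process the operators in decreasing order of $K_i:=\inf_{\sigma\in\Sigma_i}(\kappa_\sigma+\lambda_\sigma)$, so that at each step $\inf_{\sigma\in S_2}$ coincides with the $K_{i_k}$ appearing in the assumption. As written, your proposal would need the stronger (and unavailable) hypothesis that every $n_i+v(\alpha_i)$ is bounded by the global minimum of the $\kappa_\sigma+\lambda_\sigma$.
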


Here $n_i$ is a constant depending on the variety. It actually comes from the normalization factor of the Hecke operator $U_{p,a_i}$. This theorem is a classicality result, analogous to the one proved by Coleman (\cite{Co}) in the case of the modular curve. Actually, this result has been also proven by Buzzard (\cite{Bu}) and Kassaei (\cite{Ka}) using an analytic continuation method, from which we inspire here. Note that there has been extensive work for the classicality problem in the case of the existence of ordinary locus. We can cite the work of Sasaki (\cite{Sa}), Johansson (\cite{Jo}), Tian et Xiao (\cite{T-X}) and Pilloni and Stroh (\cite{P-S}) in the case of Hilbert varieties, and the work of the author (\cite{Bi1}) for more general PEL Shimura varieties (one can also cite the thesis of the author \cite{Bi2} for Shimura varieties with ramification). \\ 
In this introduction, we have assumed that $p$ is inert in $F_0$ and splits in $F$. The group associated to the Shimura variety is then a linear group at $p$. If $p$ is inert in $F$, then the group is an unitary group at $p$. Everything we have said adapts to that context : the description of the $\mu$-ordinary locus, the existence of the canonical subgroups, and the analytic continuation theorem. Note that the geometry is more involved in that case : for example, to define Hecke operators, one has to deal with subgroups of $A[p^2]$. \\
Of course, the assumption that $p$ is inert in $F_0$ is for simplicity, so what we have said can be formulated for any prime $p$ unramified in $F$. In the redaction of the paper, we have tried to formulate propositions valid both in the linear and unitary case as much as possible, but of course we often had to treat separately the proofs.  \\
$ $\\
\indent Let us now talk briefly about the text. In the first part, we introduce the varieties we are dealing with, define the $\mu$-ordinary locus and study the canonical subgroups. In the section $2$ we define the classical and overconvergent modular, as well as the Hecke operators. In the third part, we prove the analytic continuation result. For simplicity, we suppose that the prime $p$ is inert in the sections $2$ and $3$, and the section $4$ briefly shows how to handle the general case. \\
$ $\\
\indent The author would like to thank Beno\^it Stroh, Valentin Hernandez and Vincent Pilloni for helpful discussions.

\section{Shimura varieties of type (A)}

\subsection{The moduli space}

\subsubsection{Shimura datum}

We will introduce the objects needed to define the Shimura variety of unitary type we will work with. We refer to \cite{Ko} section $5$ for more details. \\
Let $F_0$ be a totally real field of degree $d$, and $F$ a $CM$-extension of $F_0$. Let $(U_{\mathbb{Q}},\langle,\rangle)$ be a non-degenerate hermitian $F$-module, and $G$ its automorphism group. For all $\mathbb{Q}$-algebra $R$, we have
$$G(R) = \left\{ (g,c) \in GL_{F} (U_{\mathbb{Q}} \otimes_{\mathbb{Q}} R) \times R^* , \langle gx,gy \rangle =c \langle x,y\rangle \text{ for all } x,y \in U_{\mathbb{Q}} \otimes_{\mathbb{Q}} R \right\} $$

Let $\tau_1, \dots, \tau_d$ be the embeddings of $F_0$ into $\mathbb{R}$, and let $\sigma_i$ and $\overline{\sigma_i}$ be the two embeddings of $F$ into $\mathbb{C}$ extending $\tau_i$.The choice of $\sigma_i$ gives an isomorphism $F \otimes_{F_0, \tau_i} \mathbb{R} \simeq \mathbb{C}$. Let $U_i = U_\mathbb{Q} \otimes_{F_0,\tau_i} \mathbb{R}$. We note $(a_i,b_i)$ the signature of the anti-hermitian structure on $U_i$.
 Then $G_\mathbb{R}$ is isomorphic to 
$$\text{G} \left( \prod_{i=1}^d \text{U}(a_i,b_i) \right)$$
where $a_i + b_i$ is independent of $i$, and is equal to $\frac{1}{2d} $dim$_\mathbb{Q} U_\mathbb{Q}$. We'll call $a+b$ this quantity. \\
We also give ourselves a morphism of $\mathbb{R}$-algebra $h : \mathbb{C} \to $ End$_F U_\mathbb{R}$ such that $\langle h(z)v,w\rangle ~=~\langle v,h(\overline{z})w\rangle$ and $(v,w) \to \langle v,h(i)w\rangle$ is positive definite. This morphism gives a complex structure on $U_\mathbb{R}$ : let $U^{1,0}_{\mathbb{C}}$ be the subspace of $U_\mathbb{C}$ on which $h(z)$ acts by multiplication by $z$. \\
We then have $U^{1,0}_{\mathbb{C}} \simeq \prod_{i=1}^d (\mathbb{C})^{a_i} \oplus \overline{(\mathbb{C})}^{b_i}$ as $F \otimes_{\mathbb{Q}} \mathbb{R} \simeq \oplus_{i=1}^d \mathbb{C}$-module (the action of $\mathbb{C}$ on $(\mathbb{C})^{a_i} \oplus \overline{(\mathbb{C})}^{b_i}$ is the standard action on the first factor and the conjugated action on the second). \\
Let $U$ be an $O_F$-stable lattice of $U_\mathbb{Q}$, and we assume that the pairing $\langle,\rangle$ induces a pairing $U \times U \to \mathbb{Z}$ which is perfect at $p$.
$ $\\
The ring $O_F$ is a free $\mathbb{Z}$-module. Let $\alpha_1, \dots, \alpha_t$ be a basis of this module, and 
$$\text{det}_{U^{1,0}} = f(X_1, \dots, X_t) = \det (X_1 \alpha_1 + \dots + X_t \alpha_t ;U^{1,0}_{\mathbb{C}} \otimes_{\mathbb{C}} \mathbb{C} [X_1, \dots, X_t])$$
We can show that $f$ is polynomial with algebraic coefficients. The number field $E$ generated by its coefficients is called the reflex field. 

\begin{rema}
We chose for simplicity to work with a central algebra. One can easily adapt the arguments here replacing $F$ by a simple algebra $B$ with center $F$.
\end{rema}

\subsubsection{The Shimura variety}

Let us define now the PEL Shimura variety of type (A) associated to $G$. Let $K$ be an extension of $\mathbb{Q}_p$ containing the images of all the embeddings $F \hookrightarrow \overline{\mathbb{Q}_p}$. Assume that $p$ is unramified in $F$. We fix an embedding $E \hookrightarrow K$, so that the coefficients of the polynomial $\det_{U^{1,0}}$ can be seen as elements of $O_K$. We also fix an integer $N \geq 3$ prime to $p$.

\begin{defi}
Let $X$ be the moduli space over $O_K$, which $S$-points are the isomorphism classes of $(A,\lambda,\iota,\eta)$ where
\begin{itemize}
\item $A \to S$ is an abelian scheme
\item $\lambda : A \to A^t$ is a prime to $p$ polarization.
\item $\iota : O_F \to $ End $A$ is compatible with complex conjugation and the Rosati involution, and the polynomials $\det_{U^{1,0}}$ and $\det_{Lie (A)}$ are equal.
\item $\eta : A[N] \to U/NU$ is an $O_F$-linear symplectic similitude, which lifts locally for the étale topology in an $O_F$-linear symplectic similitude 
$$H_1 (A,\mathbb{A}_f^p) \to U \otimes_{\mathbb{Z}} \mathbb{A}_f^p$$
\end{itemize}
\end{defi}

The moduli space $X$ is representable by a quasi-projective scheme over $O_K$. We will make a slight hypothesis on the variety we consider.

\begin{hypo} \label{notcurve}
We suppose that we are not in the case $d=1$ and $(a,b)=(1,1)$.
\end{hypo}

This condition is technical, and will ensure that one can neglect the cusps in the definition of the modular forms. The case we exclude corresponds essentially to the modular curve, which is well known.

\subsubsection{Iwahori level}

Let $\pi$ be a prime of $F_0$ above $p$, we will call $f$ the residual degree and note $q:=p^f$. Since $p$ is unramified in $F$, we have two possibilities for the behavior of $p$ in $F$ : 
\begin{itemize}
\item $\pi$ splits in $\pi^+ \pi^-$ in $F$. We say that $\pi$ is in case $L$.
\item $\pi$ is inert in $F$. We say that $\pi$ is in case $U$.
\end{itemize}

The terminology $L$ and $U$ becomes from the fact that the group $G$ at $\pi$ is respectively a linear or an unitary group according to the different cases. To define the Iwahori structure, we will break into the two cases.

\begin{defi}
Let $X_{Iw,\pi}$ be the moduli space of isomorphism classes of $(A,\lambda,\iota,\eta,H_\bullet)$, where 
\begin{itemize}
\item $(A,\lambda,\iota,\eta)$ is a point of $X$.
\item $0 \subset H_1 \subset \dots \subset H_{a+b} = A[\pi^+]$, where each $H_i$ is a subgroup of $A[\pi^+]$ which is locally for the étale topology isomorphic to $(O_F / \pi^+)^i$ in the case $L$.
\item $0 \subset H_1 \subset \dots \subset H_{a+b} = A[\pi]$, where each $H_i$ is a subgroup of $A[\pi]$ which is locally for the étale topology isomorphic to $(O_F / \pi)^i$, and such that $H_{a+b-i} = H_i^\bot$ in the case $U$.
\end{itemize}
\end{defi}

The moduli spaces $X_{Iw,\pi}$ are representable by quasi-projective schemes over $O_K$. We also define the full Iwahori space by $X_{Iw} = X_{Iw,\pi_1} \times_X X_{Iw,\pi_2} \times_X \dots \times_X X_{Iw,\pi_g}$, where $\pi_1, \dots, \pi_g$ are the primes of $F_0$ above $p$, and the maps $X_{Iw,\pi_i} \to X$ are the natural morphisms corresponding to forgetting the $(H_i)$. 

\begin{rema}
In the case $L$, the subgroups $A[\pi^+]$ and $A[\pi^-]$ are Cartier duals. This comes from the compatibility between the complex conjugation and the Rosati involution. Therefore, each of these groups is totally isotropic. A flag $(H_\bullet)$ of $A[\pi^+]$ give naturally a flag $(H_\bullet^\bot)$ of $A[\pi^-]$, with $H_i^\bot = (A[\pi^+] / H_i)^D \subset A[\pi^+]^D = A[\pi^-]$. Choosing the prime $\pi^-$ instead of $\pi^+$ would have given the same definition.
\end{rema}

Now we will explicitly describe the determinant condition for the abelian scheme $A$. We are still working with a prime $\pi$ of $F_0$ above $p$, and assume it is of type L. Let $\Sigma_\pi$ be the decomposition group at $\pi$, i.e. the elements $\sigma \in Hom(F_0, \overline{\mathbb{Q}_p})$, such that $\sigma$ sends $\pi$ into the maximal ideal of $\overline{\mathbb{Q}_p}$. For every $\sigma \in \Sigma_\pi$, there are two embeddings $\sigma^+$ and $\sigma^-$ of $F$ into $\overline{\mathbb{Q}_p}$ above $\sigma$ ; the embedding $\sigma^+$ sends $\pi^+$ into the maximal ideal of $\overline{\mathbb{Q}_p}$, and similarly for $\pi^-$. To $\sigma$ we have a couple of integers $(a_\sigma, b_\sigma)$, and the choice of the embedding $\sigma^+$ gives an order for the two elements of the couple. Let $A \to R$ be an abelian scheme over an $O_K$-algebra $R$. If we denote $\omega_\pi := e^* \Omega^1_{A[\pi^\infty]}$, then we have $\omega_\pi = \omega_{\pi}^+ \oplus \omega_{\pi}^-$, where $\omega_\pi^+ := e^* \Omega^1_{A[(\pi^+)^\infty]}$. The determinant condition for $A$ implies then that
$$\omega_\pi^+ = \bigoplus_{\sigma \in \Sigma_\pi} R^{a_\sigma}$$
with $O_F$ acting on $R^{a_\sigma}$ by $\sigma^+$. Similarly, we have
$$\omega_\pi^- = \bigoplus_{\sigma \in \Sigma_\pi} R^{b_\sigma}$$
with $O_F$ acting on $R^{b_\sigma}$ by $\sigma^-$. \\
$ $\\
Now suppose that $\pi$ is of type U. We still denote by $\Sigma_\pi$ the decomposition group at $\pi$ of $F_0$. For each $\sigma \in \Sigma_\pi$, there are two embeddings $\sigma_1$ and $\sigma_2$ of $F$ above $\sigma$ ; the choice of $\sigma_1$ gives an order for the elements of the couple $(a_\sigma, b_\sigma)$. Let $A \to R$ be an abelian scheme over an $O_K$-algebra $R$. If we denote $\omega_\pi := e^* \Omega^1_{A[\pi^\infty]}$, then the determinant condition for $A$ implies
$$\omega_\pi = \bigoplus_{\sigma \in \Sigma_\pi} R^{a_\sigma} \oplus R^{b_\sigma}$$
where $O_F$ acts by $\sigma_1$ on $R^{a_\sigma}$ and by $\sigma_2$ on $R^{b_\sigma}$.

\subsection{$\mu$-ordinary locus}

We will describe in this section the $\mu$-ordinary locus of the Shimura variety. Let us first introduce some notations. \\
Let $L$ be an unramified extension of $\mathbb{Q}_p$ of degree $f_0$, and $k$ a field of characteristic $p$ containing the residue field of $L$. Let $D$ be the Galois group of $L$ over $\mathbb{Q}_p$ ; there is a isomorphism $D \simeq \mathbb{Z} / f_0 \mathbb{Z}$, where $1$ is identified with the Frobenius $\sigma$ of $L$. We will note $W(k)$ the ring of Witt vectors of $k$. Let $\underline{\varepsilon} = (\varepsilon_\tau)_{\tau \in D}$ be a sequence of integers equal to $0$ or $1$. We define a Dieudonné module $M_{\underline{\varepsilon}}$ in the following way : it is a free $W(k)$-module of rank $f_0$, and if $(e_\tau)_{\tau \in D}$ is a basis of this module, then the Frobenius and Verschiebung are defined by 
$$F e_{\sigma^{-1} \tau} = p^{\varepsilon_\tau} e_{\tau}   \qquad V e_{\tau} = p^{1-\varepsilon_\tau} e_{\sigma^{-1} \tau}$$
The module $M_{\underline{\varepsilon}}$ is given an action of the ring of integers of $L$ : $O_L$ acts on $W(k) \cdot e_\tau$ by $\tau$. \\
We'll note $BT_{\underline{\varepsilon}}$ the $p$-divisible group over $k$ corresponding to the Dieudonné module $M_{\underline{\varepsilon}}$, and $H_{\underline{\varepsilon}}$ the $p$ torsion of this $p$-divisible group. 

\subsubsection{Linear case}

Now we come back to our Shimura variety. Consider first the case $L$ ; we are still considering a place $\pi$ of $F_0$ above $\pi$ which splits in $\pi = \pi^+ \pi^-$ in $F$. If $k$ is a field containing the residue field of $O_K$, and if $x=(A,\lambda,\iota,\eta)$ is a $k$-point of $X$, then the fact that the abelian variety $A$ is $\mu$-ordinary at $\pi$ will depend on the $p$-divisible group $A[(\pi^+)^\infty]$. Recall that this $p$-divisible group has an action of $O_{F,\pi^+}$, the completion of $O_F$ at $\pi^+$ ; this is an unramified extension of $\mathbb{Q}_p$ of degree $f$. If $\Sigma_\pi$ denotes as before the decomposition group of $\pi$ in $F_0$, then there is a bijection between $\Sigma_\pi$ and the Galois group of $O_{F,\pi^+}$, and for each $\sigma \in \Sigma_\pi$, we have a couple of integers $(a_\sigma, b_\sigma)$. We order the elements $(a_\sigma)$ by increasing order : we then have $a_1 \leq a_2 \leq \dots \leq a_f$. For each integer $0 \leq i \leq f$ we define the sequence $\underline{\varepsilon}_i = (\varepsilon_{i,j})_{1 \leq j \leq f}$ by $\varepsilon_{i,j} = 1$ if $j \geq i+1$ and $\varepsilon_{i,j} = 0$ otherwise. We also set by convention $a_0 = 0$ and $a_{f+1} = a+b$.

\begin{defi}
Let $k$ be an algebraically closed field of characteristic $p$, and $x=(A,\lambda,\iota,\eta)$ be a $k$-point of $X$. Then $x$ is $\mu$-ordinary at $\pi$ if there is an isomorphism of $p$-divisible groups with $O_{F,\pi^+}$ action
$$A[(\pi^+)^{\infty} ] \simeq \prod_{i=0}^{f}  BT_{\underline{\varepsilon}_i}^{a_{i+1}-a_i}$$
\end{defi}

Remark that the term on the right-hand side is explicitly
$$BT_{(1, \dots, 1)}^{a_1} \times BT_{(0,1,\dots,1)}^{a_2-a_1} \times \dots \times BT_{(0, \dots,0, 1)}^{a_f-a_{f-1}} \times BT_{(0, \dots, 0)}^{b_f}$$

Let $X_0$ denote the special fiber of $X$, and $X_0^{\mu-\pi-ord}$ the $\mu$-ordinary locus at the place $\pi$. We have the following proposition due to Wedhorn (\cite{We} theorem $1.6.2$).

\begin{prop}
The $\mu$-ordinary locus $X_0^{\mu-\pi-ord}$ is open and dense in $X_0$.
\end{prop}

We also have the following characterization of the $\mu$-ordinary locus.

\begin{prop} [\cite{Mo}, Theorem $1.3.7$] \label{EO_l}
Let $k$ be an algebraically closed field of characteristic $p$, and $x=(A,\lambda,\iota,\eta)$ be a $k$-point of $X$. Then $x$ is $\mu$-ordinary at $\pi$ if and only if there is an isomorphism of finite flat group schemes with $O_{F,\pi^+}$ action
$$A[\pi^+] \simeq \prod_{i=0}^{f}  BT_{\underline{\varepsilon}_i}^{a_{i+1}-a_i} [\pi^+]$$
\end{prop}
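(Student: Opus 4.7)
The forward implication is immediate: given an isomorphism $A[(\pi^+)^\infty] \simeq \prod_{i=0}^{f} BT_{\underline{\varepsilon}_i}^{a_{i+1}-a_i}$ of $p$-divisible groups with $O_{F,\pi^+}$-action, one obtains the required isomorphism on $\pi^+$-torsion by restriction. The content of the proposition is the converse, and my plan would be to pass to (covariant) Dieudonn\'e modules.

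Let $M$ denote the Dieudonn\'e module of $A[(\pi^+)^\infty]$ over $W(k)$. It is a free $W(k)$-module of rank $f(a+b)$ equipped with a $\sigma$-linear Frobenius $F$, Verschiebung $V$, and an action of $O_{F,\pi^+}$. Using the $f$ embeddings $\tau \in \Sigma_\pi$, one decomposes $M = \bigoplus_\tau M_\tau$ with each $M_\tau$ free of rank $a+b$ over $W(k)$, and $F : M_{\sigma^{-1}\tau} \to M_\tau$, $V : M_\tau \to M_{\sigma^{-1}\tau}$. The determinant condition for the Shimura datum pins down the dimensions of $\mathrm{coker}(F)$ and $\mathrm{coker}(V)$ in each $\tau$-component in terms of $a_\tau$ and $b_\tau$. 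The hypothesis of the proposition provides the isomorphism class of $M/pM$ as a Dieudonn\'e module with $O_{F,\pi^+}$-action, together with a prescribed product decomposition modulo $p$.

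The key step is then to show that, for this particular model, the lift to $M$ is unique up to isomorphism; equivalently, that the $p$-divisible group is determined by its $p$-torsion. One would pick a basis $(\bar e_\tau)$ of $M/pM$ realizing the product structure on $p$-torsion, lift it to a basis $(e_\tau)$ of $M$, and show that after a further change of basis the maps $F$ and $V$ must take exactly the model form $F e_{\sigma^{-1}\tau} = p^{\varepsilon_{i,\tau}} e_\tau$ defining the $BT_{\underline{\varepsilon}_i}$. Rigidity would come from iterating $F$ along the cyclic orbit of embeddings and using that the products of Frobenius twists are forced by the integrality of $F$ and $V$ together with the $O_{F,\pi^+}$-equivariance. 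I expect the main obstacle to be precisely this rigidity statement: in general, the $p$-torsion does not determine a $p$-divisible group. What saves us here is that the $BT_{\underline{\varepsilon}_i}$ are \emph{minimal} $p$-divisible groups in the sense of Oort, for which the isomorphism class is indeed detected by the $p$-torsion; this is the deep input underlying Moonen's theorem, and a self-contained proof would essentially amount to reproving minimality in this $O_{F,\pi^+}$-linear setting.
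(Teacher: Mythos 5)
The paper does not give a proof of this proposition: it is imported verbatim from Moonen (\cite{Mo}, Theorem~1.3.7), so there is no in-house argument to compare against. What one can say is that your sketch correctly identifies the engine underlying Moonen's theorem. The forward direction is, as you say, trivial. For the converse, the slot decomposition $M=\bigoplus_\tau M_\tau$ with $F$ shifting the $\tau$-index and the determinant condition fixing $\dim\operatorname{coker}(F\colon M_{\sigma^{-1}\tau}\to M_\tau)=b_\tau$ is exactly the set-up in \cite{Mo}, and the rigidity you flag --- that a $p$-divisible group with $O_{F,\pi^+}$-action whose $\pi^+$-torsion is the prescribed $BT_1$ is forced to be the product $\prod_i BT_{\underline{\varepsilon}_i}^{a_{i+1}-a_i}$ --- is a minimality statement in the sense of Oort, generalized to the $\mathcal{O}$-linear setting. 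That is precisely what Moonen proves, and you are right that there is no shortcut: ``the $BT_1$ determines the $p$-divisible group'' is false in general and true here only because these objects are minimal.

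Two small refinements. First, the statement ``the lift to $M$ is unique up to isomorphism'' should be read as a statement about isomorphism classes of Dieudonn\'e modules over $W(k)$ with the prescribed reduction mod $p$, not as uniqueness of a lift of a fixed basis; Oort-type minimality asserts that any $p$-divisible group with the given minimal $p$-torsion is isomorphic to the model, which is what you actually use. Second, one should note that in the linear case (type $L$ at $\pi$) the polarization datum imposes no extra constraint on $A[(\pi^+)^\infty]$: the polarization identifies $A[(\pi^-)^\infty]$ with the Cartier--Serre dual of $A[(\pi^+)^\infty]$, so it is enough to determine the $O_{F,\pi^+}$-linear isomorphism class of the latter, which is what your Dieudonn\'e-module argument addresses. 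With those caveats, your proposal is a faithful (if compressed) reconstruction of the route taken in \cite{Mo}.
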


Since $BT_{\underline{\varepsilon}_i}$ is multiplicative for $i=0$, étale for $i=f$ and bi-infinitesimal otherwise, we have the following criterion for the existence of the ordinary locus at $\pi$.

\begin{prop}
The $\mu$-ordinary locus equals the ordinary locus (at the place $\pi$) if and only if there exists an integer $a$ such that $a_\sigma = a$ for all $\sigma \in \Sigma_\pi$.
\end{prop}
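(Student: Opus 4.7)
The plan is to combine the explicit decomposition of the $p$-divisible group on the $\mu$-ordinary locus (the defining isomorphism right before Proposition \ref{EO_l}) with the observation recalled in the excerpt that $BT_{\underline{\varepsilon}_0}$ is multiplicative, $BT_{\underline{\varepsilon}_f}$ is étale, and all intermediate $BT_{\underline{\varepsilon}_i}$ for $1 \le i \le f-1$ are bi-infinitesimal. Once this dictionary is in hand, the equivalence reduces to asking which $a_{i+1}-a_i$ are allowed to be nonzero.

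For the "if" direction, suppose $a_\sigma = a$ for every $\sigma \in \Sigma_\pi$, i.e.\ $a_1 = \cdots = a_f = a$. Then in the product
\[
A[(\pi^+)^\infty] \simeq \prod_{i=0}^{f} BT_{\underline{\varepsilon}_i}^{a_{i+1}-a_i}
\]
every exponent $a_{i+1}-a_i$ for $1 \le i \le f-1$ vanishes, and only the factors $BT_{\underline{\varepsilon}_0}^{a}$ (multiplicative) and $BT_{\underline{\varepsilon}_f}^{b}$ (étale) survive. Hence $A[(\pi^+)^\infty]$ is an extension of an étale $p$-divisible group by a multiplicative one at every $\mu$-ordinary point, so the $\mu$-ordinary locus is contained in, and in fact equal to (by openness and density), the ordinary locus at $\pi$.

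For the converse, assume the $\mu$-ordinary locus at $\pi$ coincides with the ordinary locus at $\pi$, and pick an algebraically closed point $x$ of this locus. Then $A[(\pi^+)^\infty]_x$ is an extension of an étale by a multiplicative $p$-divisible group with $O_{F,\pi^+}$-action. Comparing this with the $\mu$-ordinary decomposition shows that any factor $BT_{\underline{\varepsilon}_i}$ with $1 \le i \le f-1$ appearing must be simultaneously étale-by-multiplicative, which is impossible because those $BT_{\underline{\varepsilon}_i}$ are bi-infinitesimal. Therefore $a_{i+1}-a_i = 0$ for all $1 \le i \le f-1$, giving $a_1 = \cdots = a_f$ and producing the desired common value $a$.

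The only step that requires a little care is the uniqueness of the decomposition used in the converse: once one knows that the isotypic components $BT_{\underline{\varepsilon}_i}$ are pairwise non-isogenous (they are distinguished by their Newton polygons / slopes), the exponents $a_{i+1}-a_i$ in the decomposition of $A[(\pi^+)^\infty]_x$ are determined, so no ambiguity enters the argument. Everything else is a direct reading of the multiplicative/étale/bi-infinitesimal classification recalled just above the statement.
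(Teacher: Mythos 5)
Your proof is correct and follows exactly the route the paper suggests: the proposition is stated immediately after the observation that $BT_{\underline{\varepsilon}_i}$ is multiplicative for $i=0$, étale for $i=f$, and bi-infinitesimal otherwise, and the paper gives no further argument. Reading off which exponents $a_{i+1}-a_i$ vanish from the decomposition $A[(\pi^+)^\infty]\simeq\prod_i BT_{\underline{\varepsilon}_i}^{a_{i+1}-a_i}$ is precisely the intended content, and your appeal to the uniqueness of the slope/isoclinic decomposition is the right way to make the converse rigorous.

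One small point worth tightening: in the "if" direction you justify the reverse containment (ordinary $\subseteq$ $\mu$-ordinary) by invoking openness and density of the $\mu$-ordinary locus. That alone does not force equality — an open dense subset can be strictly contained in another open dense subset. What is actually used is the general fact that an ordinary point always lies in the open (maximal) Newton stratum, i.e.\ ordinary $\Rightarrow$ $\mu$-ordinary with no hypothesis on the signature; combined with your argument that $\mu$-ordinary $\Rightarrow$ ordinary when all $a_\sigma$ coincide, this gives the equality cleanly. This is a minor phrasing issue, not a gap in the substance.
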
 

This last condition is also equivalent to the fact that the local reflex field at $\pi$ is equal to $\mathbb{Q}_p$ (one can see \cite{We} section $1.6$ for more details).

\subsubsection{Unitary case}

Let us now consider the unitary case. Let $k$ be a field containing the residue field of $O_K$, and $x=(A,\lambda,\iota,\eta)$ be a $k$-point of $X$. The fact that the abelian variety $A$ is $\mu$-ordinary at $\pi$ will depend on the $p$-divisible group $A[\pi^\infty]$. Recall that this $p$-divisible group has an action of $O_{F,\pi}$, the completion of $O_F$ at $\pi$ ; this is an unramified extension of $\mathbb{Q}_p$ of degree $2f$. Recall that $\Sigma_\pi$ is the decomposition group at $\pi$ of $F_0$ and it is of cardinal $f$. If $\sigma \in \Sigma_\pi$, there are two embeddings $\sigma_1$ and $\sigma_2$ of $F$ into $\overline{\mathbb{Q}_p}$, and the choice of one of the two gives elements $a_\sigma$ and $b_\sigma$. We suppose that the choice is made such that $a_\sigma \leq b_\sigma$ ; we also order the elements in $\Sigma_\pi$ such that the sequence $(a_\sigma)$ is increasing. We then have
$$a_1 \leq a_2 \leq \dots \leq a_f \leq b_f \leq b_{f-1} \leq \dots \leq b_1$$
This gives an order on the embeddings of $F$ into $\overline{\mathbb{Q}_p}$. For each integer $0 \leq i \leq 2f$ we define the sequence $\underline{\varepsilon}_i = (\varepsilon_{i,j})_{1 \leq j \leq 2f}$ par $\varepsilon_{i,j} = 1$ if $j \geq i+1$ and $\varepsilon_{i,j} = 0$ otherwise. We define a sequence $(\alpha_i)_{0 \leq i \leq 2f+1}$ by $\alpha_0 = 0$, $\alpha_i = a_i$ for $1 \leq i \leq f$, $\alpha_i = b_{2f+1-i}$ for $f+1 \leq i \leq 2f$ and $\alpha_{2f+1} = a+b$.

\begin{defi}
Let $k$ be an algebraically closed field of characteristic $p$, and $x=(A,\lambda,\iota,\eta)$ be a $k$-point of $X$. Then $x$ is $\mu$-ordinary at $\pi$ if there is an isomorphism of $p$-divisible groups with $O_{F,\pi}$ action
$$A[\pi^{\infty} ] \simeq \prod_{i=0}^{2f}  BT_{\underline{\varepsilon}_i}^{\alpha_{i+1}-\alpha_i}$$
\end{defi}

Remark that the term in the right-hand side is explicitly
$$BT_{(1, \dots, 1)}^{a_1} \times BT_{(0,1,\dots,1)}^{a_2-a_1} \times \dots \times BT_{(0, \dots,0,1,1 \dots, 1)}^{a_f-a_{f-1}} 
\times BT_{(0, \dots,0,1 \dots, 1)}^{b_f-a_f} \times BT_{(0, \dots,0,0,1 \dots, 1)}^{a_f-a_{f-1}} \times \dots \times BT_{(0, \dots, 0)}^{a_1}$$

\noindent (We have used the fact that $b_i - b_{i-1} = a_{i-1} - a_i$, since the quantity $a_j+b_j$ is independent of $j$.) \\
Let $X_0$ denote the special fiber of $X$, and $X_0^{\mu-\pi-ord}$ the $\mu$-ordinary locus. We have the following proposition due to Wedhorn (\cite{We} theorem $1.6.2$).

\begin{prop}
The $\mu$-ordinary locus $X_0^{\mu-\pi-ord}$ is open and dense in $X_0$.
\end{prop}

We also have the following characterization of the $\mu$-ordinary locus.

\begin{prop} [\cite{Mo}, Theorem $1.3.7$] \label{EO_u}
Let $k$ be an algebraically closed field of characteristic $p$, and $x=(A,\lambda,\iota,\eta)$ be a $k$-point of $X$. Then $x$ is $\mu$-ordinary at $\pi$ if and only if there is an isomorphism of finite flat group schemes with $O_{F,\pi}$ action
$$A[\pi] \simeq \prod_{i=0}^{2f}  BT_{\underline{\varepsilon}_i}^{\alpha_{i+1}-\alpha_i} [\pi]$$
\end{prop}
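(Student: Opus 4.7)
The forward implication is immediate: simply pass from $A[\pi^\infty]$ to its $\pi$-torsion. The substance lies in the converse, a truncation-recovery statement asserting that the isomorphism class of the BT$_1$ group scheme $A[\pi]$ with its $O_{F,\pi}$-action pins down the isomorphism class of the full $p$-divisible group $A[\pi^\infty]$.

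My plan is to work through covariant Dieudonn\'e theory over the algebraically closed field $k$. The Dieudonn\'e module $M$ of $A[\pi^\infty]$ is a finite free $W(k)$-module of rank $2f(a+b)$, equipped with an action of $O_{F,\pi}$, a $\sigma$-semilinear Frobenius $F$, and its Verschiebung $V$. Since $W(k)$ contains the residue field of $O_{F,\pi}$, one obtains a decomposition $M = \bigoplus_\tau M_\tau$ indexed by the $2f$ embeddings $\tau : O_{F,\pi} \hookrightarrow W(k)$; each $M_\tau$ is free of rank $a+b$ over $W(k)$, and $F$ restricts to $W(k)$-linear maps $M_\tau \to M_{\sigma\tau}$ whose cokernels have the prescribed dimensions $a_\sigma$ or $b_\sigma$. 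The $\pi$-torsion corresponds to $M/pM$ with its induced operators, while the model group $\prod_i BT_{\underline{\varepsilon}_i}^{\alpha_{i+1}-\alpha_i}$ is described by the Dieudonn\'e module $M^\mu$ of the introduction, in which each component $F : M^\mu_\tau \to M^\mu_{\sigma\tau}$ is either an isomorphism or identically zero modulo $p$ according to the prescribed $\underline{\varepsilon}$-pattern.

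The key step is to lift an isomorphism $M/pM \simeq M^\mu/pM^\mu$ of $\pi$-torsion Dieudonn\'e modules (with $O_{F,\pi}$-action) to an isomorphism $M \simeq M^\mu$. The $\mu$-ordinary type is extremal at this truncated level: for each $\tau$ the reduction $F \bmod p : M_\tau/pM_\tau \to M_{\sigma\tau}/pM_{\sigma\tau}$ is either surjective or zero. In the surjective case, any $W(k)$-linear lift of $F$ is forced to be an isomorphism by Nakayama; in the zero case, $F$ factors through multiplication by $p$, and the relation $FV=VF=p$ transfers the analysis to $V$, which is surjective there. Using these constraints one constructs, by successive $p$-adic approximation, a basis of $M$ compatible with the $O_{F,\pi}$-grading in which $F$ and $V$ assume the model normal form, yielding an $O_{F,\pi}$-equivariant isomorphism $M \simeq M^\mu$ and hence the desired isomorphism of $p$-divisible groups.

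The main obstacle is the combinatorial bookkeeping in the lifting step: along the cyclic chain of embeddings one must track which components of $F$ and $V$ are automatic surjections and which require correction by factors of $p$, and verify that the successive adjustments converge in the $p$-adic topology on $W(k)$. Conceptually, this is the assertion that the Ekedahl--Oort stratum corresponding to the $\mu$-ordinary $\pi$-torsion type coincides with the open Newton stratum $X_0^{\mu-\pi-\mathrm{ord}}$, which is precisely the content of Moonen's Theorem~$1.3.7$ invoked in the statement.
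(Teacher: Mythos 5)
The paper does not prove this proposition; it is stated with the citation \cite{Mo}, Theorem~$1.3.7$, and the author uses it as a black box. So there is no ``paper proof'' to compare against, and your attempt is trying to reprove a result from the literature.

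That said, your sketch has a concrete error at the pivotal step. You assert that for the $\mu$-ordinary model Dieudonn\'e module, ``for each $\tau$ the reduction $F \bmod p : M_\tau/pM_\tau \to M_{\sigma\tau}/pM_{\sigma\tau}$ is either surjective or zero.'' This is false. Taking the $\tau_j$-component of $M^\mu = \bigoplus_{i=0}^{2f} (M_{\underline{\varepsilon}_i})^{\alpha_{i+1}-\alpha_i}$, the map $F$ is multiplication by $p^{\varepsilon_{i,j}}$ on the $i$-th summand, so $F\bmod p$ at $\tau_j$ is an isomorphism on the summands with $i\le j-1$ and zero on those with $i\ge j$; its rank is $\sum_{i\le j-1}(\alpha_{i+1}-\alpha_i)=\alpha_j$, a quantity strictly between $0$ and $a+b$ whenever $0<\alpha_j<a+b$. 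Indeed the determinant condition pins the rank of $F\bmod p$ at every $\tau_j$ to be $\alpha_j$ for \emph{every} point of $X$, so this rank carries no information about $\mu$-ordinarity at all. The special feature of the $\mu$-ordinary $BT_1$ is not that each $F_\tau$ is surjective or zero, but that $M/pM$ decomposes as a direct sum of the elementary pieces $M_{\underline{\varepsilon}_i}/pM_{\underline{\varepsilon}_i}$ --- equivalently, that $\ker F$ and $\mathrm{im}\, V$ split off compatibly across the cycle of embeddings.

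Because the ``surjective or zero'' dichotomy fails, the Nakayama argument you build on it does not get started, and the subsequent ``successive $p$-adic approximation'' is exactly the content you would need to supply: one must show that a direct-sum decomposition of the $BT_1$ lifts to a direct-sum decomposition of the Dieudonn\'e module itself, which is the genuinely hard rigidity statement. As written, the proposal ends by observing that the claim ``is precisely the content of Moonen's Theorem~$1.3.7$'' --- which is to concede the point rather than prove it. If you want a self-contained proof, the route via $F$-zips (or Moonen's original argument using the slope filtration of the $\mu$-ordinary $p$-divisible group and its compatibility with the truncation) is the way to make the lifting step rigorous.
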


Since $BT_{\underline{\varepsilon}_i}$ is multiplicative for $i=0$, étale for $i=2f$ and bi-infinitesimal otherwise, we have the following criterion for the existence of the ordinary locus at $\pi$.

\begin{prop}
The $\mu$-ordinary locus equals the ordinary locus (at the place $\pi$) if and only if $a_\sigma = b_\sigma = (a+b)/{2}$ for all $\sigma \in \Sigma_\pi$.
\end{prop}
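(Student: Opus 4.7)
The plan is to read off the condition directly from the explicit description of the $\mu$-ordinary $p$-divisible group given above, using the fact that among the building blocks $BT_{\underline{\varepsilon}_i}$ exactly $BT_{\underline{\varepsilon}_0}$ is multiplicative, $BT_{\underline{\varepsilon}_{2f}}$ is \'etale, and every intermediate block ($1 \leq i \leq 2f-1$) is bi-infinitesimal. An ordinary $p$-divisible group (with $O_{F,\pi}$-action) is an extension of an \'etale part by a multiplicative part, so any decomposition of it into indecomposable Dieudonn\'e-type summands contains only multiplicative and \'etale summands.

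First I would argue that the ordinary locus equals the $\mu$-ordinary locus if and only if the ordinary locus is non-empty. Indeed both loci are locally closed in $X_0$ and the $\mu$-ordinary locus is the unique open dense Ekedahl--Oort stratum; any non-empty ordinary locus is itself an Ekedahl--Oort stratum (the one of maximal Newton polygon), so if it is non-empty it must intersect, hence coincide with, the $\mu$-ordinary stratum. Thus the problem reduces to deciding when the $\mu$-ordinary points are ordinary.

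Next I would invoke Proposition~\ref{EO_u} (or rather its $p$-divisible version): at a $\mu$-ordinary point $A[\pi^\infty]$ is isomorphic to $\prod_{i=0}^{2f} BT_{\underline{\varepsilon}_i}^{\alpha_{i+1}-\alpha_i}$. For this to be ordinary, each intermediate bi-infinitesimal summand must have zero multiplicity, i.e.\ $\alpha_{i+1}=\alpha_i$ for all $1\leq i \leq 2f-1$. Unwinding the definition of the sequence $(\alpha_i)$ (with $\alpha_i=a_i$ for $1\le i\le f$ and $\alpha_i=b_{2f+1-i}$ for $f+1\le i\le 2f$) this is exactly the chain of equalities
$$a_1=a_2=\cdots=a_f=b_f=b_{f-1}=\cdots=b_1.$$
Calling the common value $c$, we have $a_\sigma=b_\sigma=c$ for every $\sigma\in\Sigma_\pi$, and since $a_\sigma+b_\sigma=a+b$ we obtain $c=(a+b)/2$.

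Conversely, if $a_\sigma=b_\sigma=(a+b)/2$ for all $\sigma\in\Sigma_\pi$, then $\alpha_i=(a+b)/2$ for every $1\le i\le 2f$, so $A[\pi^\infty]\simeq BT_{\underline{\varepsilon}_0}^{(a+b)/2}\times BT_{\underline{\varepsilon}_{2f}}^{(a+b)/2}$ at the $\mu$-ordinary points, which is ordinary. No step looks like a genuine obstacle: the proof is entirely combinatorial, the only subtlety being the preliminary reduction ``ordinary locus non-empty $\Rightarrow$ ordinary locus $=$ $\mu$-ordinary locus'', which uses only that both loci are unions of Ekedahl--Oort strata and that the $\mu$-ordinary stratum is open and dense.
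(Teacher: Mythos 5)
Your proposal is correct and is essentially the argument the paper treats as immediate: the paper states the proposition as a direct consequence of the preceding observation that among the summands $BT_{\underline{\varepsilon}_i}^{\alpha_{i+1}-\alpha_i}$ in the $\mu$-ordinary decomposition of $A[\pi^\infty]$, only $i=0$ is multiplicative, only $i=2f$ is \'etale, and all intermediate ones are bi-infinitesimal, so ordinarity forces $\alpha_1=\cdots=\alpha_{2f}$, i.e.\ $a_\sigma=b_\sigma=(a+b)/2$. Your preliminary reduction via the Ekedahl--Oort stratification (that a non-empty ordinary locus must coincide with the unique open dense $\mu$-ordinary stratum) is a clean way of making rigorous what the paper leaves implicit, and the unwinding of the $\alpha_i$ is done correctly; both directions are handled as the paper intends.
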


Again, this last condition is equivalent to the fact that the local reflex field at $\pi$ is equal to $\mathbb{Q}_p$. \\
$ $\\
We'll need later to work with the rigid space associated to $X$. Let us note $X_{rig}$ this rigid space ; it is the generic fiber of the formal completion of $X$ along its special fiber. We refer to \cite{Be} for more details on rigid spaces. We have a specialization map $sp : X_{rig} \to X_0$, and we'll note $X_{rig}^{\mu-\pi-ord}$ the inverse image of the $\mu$-ordinary locus under the specialization map. We'll also note $X_{Iw,rig}$ the rigid space associated to $X_{Iw}$.

\subsection{Canonical subgroups}

\subsubsection{Degrees and partial degrees}

Before introducing the canonical subgroups on the $\mu$-ordinary locus, we'll define the degree for a finite flat group scheme defined over a finite extension of $\mathbb{Q}_p$, and the partial degrees for these endowed with an action of a ring of integers of an unramified extension of $\mathbb{Q}_p$. 

\begin{defi}
Let $L$ be a finite extension of $\mathbb{Q}_p$ and $G$ be a finite flat group scheme of $p$-power order over $O_L$. Let $\omega_G$ be the conormal module along the unit section. The the degree of $G$ is by definition
$$\deg G := v(\text{Fitt}_0 \text{ } \omega_G)$$
where $Fitt_0$ denotes the Fitting ideal, and the valuation of an ideal is defined by $v(xO_L) = v(x)$, normalized by $v(p)=1$.
\end{defi}

We now state some properties of this function. We refer to the work of Fargues (\cite{Fa} section $3$) for more details.

\begin{prop} \label{degreeprop}
The degree function has the following properties.
\begin{itemize}
\item Let $G$ be as before. Then, if $G^D$ denotes the Cartier dual of $G$, we have 
$$\deg G^D = \text{ht} G - \deg G$$
In particular, $\deg G \in [0,$ ht $G ]$. 
\item The degree function is additive : if we have an exact sequence 
$$0 \to G_1 \to G_2 \to G_3 \to 0$$
with $G_i$ finite flat, then $\deg G_2 = \deg G_1 + \deg G_3$. 
\item Let $G$ and $G'$ be two finite flat group schemes, and suppose that there exists a morphism $f : G \to G'$ which is an isomorphism in generic fiber. Then $\deg G \leq \deg G'$, and we have equality if and only if $f$ is an isomorphism. 
\end{itemize}
\end{prop}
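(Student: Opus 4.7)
The plan is to derive all three properties from the identification $\deg G = v(\text{Fitt}_0\,\omega_G)$, so that each claim reduces to a statement about Fitting ideals of finite-length $O_L$-modules controlled by an explicit short exact sequence.

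First I would treat additivity. Given $0 \to G_1 \to G_2 \to G_3 \to 0$, the transitivity triangle of cotangent complexes, pulled back along the unit section, yields a short exact sequence
\[ 0 \to \omega_{G_3} \to \omega_{G_2} \to \omega_{G_1} \to 0 \]
of finite-length $O_L$-modules; the key input is that $G_2 \to G_3$ is a $G_1$-torsor, so $L_{G_2/G_3}$ is trivialized by the pullback of $L_{G_1/O_L}$ and the boundary map vanishes after pullback to the unit section. Since Fitting ideals are multiplicative across short exact sequences of finite-length $O_L$-modules, $\text{Fitt}_0\,\omega_{G_2} = \text{Fitt}_0\,\omega_{G_1}\cdot\text{Fitt}_0\,\omega_{G_3}$, and applying $v$ gives $\deg G_2 = \deg G_1 + \deg G_3$.

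Next, for the duality identity, I would use an isogeny d\'evissage. Any finite flat $G$ over $O_L$ embeds, after possibly enlarging $L$, as the kernel of an isogeny $\phi : A \to B$ of abelian schemes, and Cartier duality gives $G^D = \ker(\phi^\vee : B^t \to A^t)$. Using the identification $\omega_G \cong \text{coker}(\phi^* : \omega_B \to \omega_A)$ and its analogue for $G^D$, together with the Poincar\'e compatibility $\det(\phi^*)\cdot\det((\phi^\vee)^*) = \deg(\phi) = p^{\text{ht}(G)}$ (up to a unit in $O_L$), one obtains $\deg G + \deg G^D = \text{ht}(G)$. The inclusion $\deg G \in [0,\text{ht}(G)]$ is then automatic: $\deg G \ge 0$ because $\text{Fitt}_0\,\omega_G \subseteq O_L$, and the upper bound is the lower bound applied to $G^D$.

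Finally, for the comparison, I would factor $f : G \to G'$ through its schematic image $f(G)$; each of the two factors $G \twoheadrightarrow f(G)$ and $f(G) \hookrightarrow G'$ is an isomorphism on the generic fiber, so the corresponding kernel (respectively cokernel) is a finite flat group scheme $H$ with trivial generic fiber. Such an $H$ has $\deg H \ge 0$ by the bound just established, with $\deg H = 0$ forcing $\omega_H = 0$; then $H$ is \'etale, and an \'etale group scheme with trivial generic fiber over the henselian base $O_L$ is trivial. Combined with additivity this gives $\deg G \le \deg G'$ with equality iff both kernel and cokernel vanish, i.e.\ iff $f$ is an isomorphism. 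The main obstacle is the exactness of the conormal sequence in the additivity step, since $\omega$ is only right exact in general; this requires the cotangent-complex argument above, or equivalently a direct verification on Hopf algebras exploiting that $O_{G_2}$ is finite locally free over $O_{G_3}$ as a $G_1$-torsor.
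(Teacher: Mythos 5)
The paper does not prove this proposition; it simply cites Fargues [Fa, \S 3], so you are supplying a proof where the paper supplies a reference. Your structure (cotangent-complex triangle for additivity, Raynaud's embedding plus a Hodge/de Rham determinant computation for duality, factoring through the schematic image for the comparison) is essentially the standard treatment, and the third item is fully correct. The duality step is also correct but compresses a real computation inside the phrase ``Poincar\'e compatibility'': one should say that $\phi^*$ acting on all of $H^1_{\mathrm{dR}}$ has determinant of valuation $v(\deg \phi)$, that it preserves the Hodge filtration, and that the induced map on the $\mathrm{Lie}$ quotient is the $O_L$-linear dual of $(\phi^\vee)^*$ on $\omega$'s; taking determinants then gives the product formula.

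The one point where the justification as written is off (though the conclusion is right) is the vanishing of the boundary map in the additivity step. The torsor observation is what identifies the pullback of $L_{G_2/G_3}$ along the unit section with the co-Lie complex of $G_1$, giving the distinguished triangle of co-Lie complexes; but it does \emph{not} by itself kill the connecting map $H^{-1}\to\omega_{G_3}$, since fppf-local triviality of the torsor over $G_3$ says nothing about the pullback to a single section. The correct reason is separate: the co-Lie complex of a finite flat group scheme over $O_L$ is perfect of tor-amplitude $[-1,0]$ (finite flat group schemes over $O_L$ are local complete intersections), so it is represented by a two-term complex of finite free $O_L$-modules $M_{-1}\to M_0$; since $G_{1,L}$ is \'etale over the generic fiber, this complex becomes acyclic after $\otimes_{O_L} L$, hence the differential is injective and $H^{-1}=0$. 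With this substituted in, the additivity argument is complete. (Alternatively, since your duality proof uses only right-exactness of the conormal sequence and injectivity of $\phi^*$ on the free modules $\omega_B,\omega_A$, and hence does not depend on additivity, you can combine the duality relation with the automatic right-exactness applied to the sequence and to its Cartier dual to obtain $\deg G_2 \le \deg G_1 + \deg G_3 \le \deg G_2$, avoiding the co-Lie complex entirely.)
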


We deduce from the last property the following corollary.

\begin{coro}
Let $G$ be a finite flat group scheme of $p$-power order defined over a finite extension of $\mathbb{Q}_p$. Suppose that $H_1$ and $H_2$ are two finite flat subgroups of $G$. Then we have
$$\deg H_1 + \deg H_2 \leq \deg (H_1 + H_2) + \deg (H_1 \cap H_2)$$
\end{coro}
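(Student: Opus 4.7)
The plan is to exploit the third property of Proposition \ref{degreeprop} (monotonicity of the degree under morphisms that are isomorphisms in generic fiber) together with additivity along short exact sequences.

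First I would set up the schematic sum $H_1+H_2$ (the scheme-theoretic image of $H_1\times H_2 \to G$) and the schematic intersection $H_1\cap H_2$ as finite flat subgroup schemes of $G$. Over the generic fiber these are just the usual subgroups of an abstract finite group, so one has an honest isomorphism
$$H_1/(H_1\cap H_2) \xrightarrow{\sim} (H_1+H_2)/H_2$$
coming from the second isomorphism theorem. Taking schematic closures inside the finite flat quotients $H_1/(H_1\cap H_2)$ and $(H_1+H_2)/H_2$, this generic-fiber isomorphism extends to a morphism of finite flat group schemes
$$\varphi : H_1/(H_1\cap H_2) \longrightarrow (H_1+H_2)/H_2$$
which is an isomorphism on the generic fiber.

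Next I would apply Proposition \ref{degreeprop}: the third bullet gives
$$\deg\bigl(H_1/(H_1\cap H_2)\bigr) \leq \deg\bigl((H_1+H_2)/H_2\bigr),$$
and additivity along the two exact sequences
$$0 \to H_1\cap H_2 \to H_1 \to H_1/(H_1\cap H_2) \to 0, \qquad 0 \to H_2 \to H_1+H_2 \to (H_1+H_2)/H_2 \to 0$$
converts this inequality into
$$\deg H_1 - \deg(H_1\cap H_2) \leq \deg(H_1+H_2) - \deg H_2,$$
which is exactly the desired inequality after rearranging.

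The only non-routine step is checking that $H_1\cap H_2$ and $H_1+H_2$ are genuinely finite flat subgroup schemes of $G$ over $O_L$ (so that their degrees are defined) and that the quotients appearing above are finite flat, so that additivity of degree applies. The main technical obstacle is therefore verifying flatness of the sum and intersection; this is standard for finite flat group schemes of $p$-power order over a DVR, but worth invoking explicitly, after which the inequality follows formally from the properties of the degree function.
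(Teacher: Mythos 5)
Your proof is correct and rests on exactly the same two ingredients as the paper's (the monotonicity of degree under a generic-fiber isomorphism, and additivity along short exact sequences); the only difference is organizational: the paper first quotients by $H_1\cap H_2$ to reduce to the disjoint case and then compares $H_1\times H_2$ with $H_1+H_2$, whereas you keep $H_1\cap H_2$ in play and invoke the second isomorphism theorem to compare $H_1/(H_1\cap H_2)$ with $(H_1+H_2)/H_2$. The two versions are formally equivalent, and your explicit remark that one must verify flatness of the sum, intersection, and quotients is a reasonable point the paper leaves implicit.
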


\begin{proof}
By dividing everything by $H_1 \cap H_2$, we are reduced to the case $H_1 \cap H_2 = \{ 0 \}$. The morphism $H_1 \times H_2 \to H_1 + H_2$ is an isomorphism in generic fiber, thus by the previous proposition, we get
$$\deg (H_1 \times H_2) \leq \deg (H_1 + H_2)$$
But since the degree function is additive, we have $\deg (H_1 \times H_2) = \deg H_1 + \deg H_2$.
\end{proof}

Let $G$ be as in the previous definition and suppose now that $G$ has an action of $O_M$, where $M$ is a finite unramified extension of $\mathbb{Q}_p$. Let $\Sigma$ be the set of embeddings of $M$ into $\overline{\mathbb{Q}_p}$. The the module $\omega_G$ has an action of $O_M$ and has the decomposition  
$$\omega_G = \bigoplus_{\sigma \in \Sigma} \omega_{G,\sigma}$$
where $O_M$ acts on $\omega_{G,\sigma}$ by $\sigma$.

\begin{defi}
The partial degree of $G$ is defined for all $\sigma \in \Sigma$ as 
$$\deg_\sigma G = v(\text{Fitt}_0 \text{ } \omega_{G, \sigma}) $$
\end{defi}

\begin{prop}
The partial degree functions have the following properties.
\begin{itemize}
\item We have $\deg G = \sum_{\sigma \in \Sigma} \deg_\sigma G$.
\item Suppose that $G$ has height $[M : \mathbb{Q}_p] h$. If $G^D$ denotes the Cartier dual of $G$, we have for all $\sigma \in \Sigma$ 
$$\deg_\sigma G^D = h - \deg_\sigma G$$
In particular, $\deg_\sigma G \in [0,h]$. 
\item The partial degree functions are additive : if we have an exact sequence 
$$0 \to G_1 \to G_2 \to G_3 \to 0$$
with $G_i$ finite flat with action of $O_M$, then $\deg_\sigma G_2 = \deg_\sigma G_1 + \deg_\sigma G_3$ for all $\sigma \in \Sigma$. 
\end{itemize}
\end{prop}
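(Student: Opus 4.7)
I would prove the three properties in the order (i), (iii), then (ii), the last of which carries the real content.

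For (i), since $M/\mathbb{Q}_p$ is unramified, the ring $O_M \otimes_{\mathbb{Z}_p} O_L$ splits as $\prod_{\sigma \in \Sigma} O_L$, so the decomposition $\omega_G = \bigoplus_\sigma \omega_{G,\sigma}$ is actually a direct sum of $O_L$-modules. The identity $\deg G = \sum_\sigma \deg_\sigma G$ follows from the multiplicativity $\text{Fitt}_0(M_1 \oplus M_2) = \text{Fitt}_0(M_1) \cdot \text{Fitt}_0(M_2)$ on finitely generated torsion $O_L$-modules, after taking valuations. For (iii), a short exact sequence of finite flat group schemes gives an exact sequence of conormal modules
$$0 \to \omega_{G_3} \to \omega_{G_2} \to \omega_{G_1} \to 0,$$
exactly the input Fargues uses for the total additivity. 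Since the $G_i$ carry compatible $O_M$-actions, the sequence is $O_M$-equivariant and therefore remains exact $\sigma$ by $\sigma$; multiplicativity of $\text{Fitt}_0$ in short exact sequences of torsion $O_L$-modules then provides additivity of each $\deg_\sigma$.

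For (ii), working locally over $O_L$ I would present $G$ as the kernel of an $O_M$-equivariant isogeny $\phi : H \to H'$ between two $p$-divisible groups carrying an $O_M$-action of $O_M$-height $h$ (such a presentation exists by embedding $G$ into its divisible $O_M$-hull). Functoriality of the Hodge exact sequence $0 \to \omega_{H^D} \to \mathbb{D}(H) \to \text{Lie}(H) \to 0$ on the covariant Dieudonn\'e crystal yields a commutative diagram
$$
\begin{array}{ccccccccc}
0 & \to & \omega_{H^D} & \to & \mathbb{D}(H) & \to & \text{Lie}(H) & \to & 0 \\
  &     & \downarrow  &     & \downarrow   &     & \downarrow    &     &   \\
0 & \to & \omega_{(H')^D} & \to & \mathbb{D}(H') & \to & \text{Lie}(H') & \to & 0
\end{array}
$$
in which $\mathbb{D}(H)$ and $\mathbb{D}(H')$ are free $O_M \otimes_{\mathbb{Z}_p} O_L$-modules of rank $h$. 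Restricting to the $\sigma$-isotypic component gives a commutative diagram of free $O_L$-modules whose vertical determinants multiply. Identifying the map on the left with $(\phi^D)^{*}$ (by functoriality of Cartier duality) and the one on the right with the $O_L$-dual of $\phi^{*}$, one obtains
$$v\bigl(\det \phi^{*}_\sigma\bigr) + v\bigl(\det (\phi^D)^{*}_\sigma\bigr) = v\bigl(\det \mathbb{D}(\phi)_\sigma\bigr).$$
The left-hand side equals $\deg_\sigma G + \deg_\sigma G^D$ by definition, and the bound $\deg_\sigma G \in [0,h]$ will be immediate from the resulting identity and positivity of $\deg_\sigma G^D$.

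\textbf{Main obstacle.} The real content is the claim that $v(\det \mathbb{D}(\phi)_\sigma) = h$ for each $\sigma$, equivalently that the cokernel $\mathbb{D}(G)_\sigma$ has $O_L$-length exactly $h$ --- a $\sigma$-by-$\sigma$ ``balance'' property of the crystal. I would first verify this in the case $p G = 0$, where $G$ carries an action of $O_M / p \simeq \mathbb{F}_{p^{[M:\mathbb{Q}_p]}}$ and Raynaud's classification of group schemes of type $(p, \dots, p)$ provides an explicit description of $\omega_G$ and $\omega_{G^D}$ in terms of characters of $\mathbb{F}_{p^{[M:\mathbb{Q}_p]}}^{*}$, making the $\sigma$-components directly computable. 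The general case then follows by d\'evissage along $0 \to G[p] \to G \to G/G[p] \to 0$, using the additivity (iii) proved above.
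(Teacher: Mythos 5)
The paper itself does not give a proof of this proposition; it simply points to the reference \cite{Bi2}, section $1.1.5$. So your argument cannot be measured against the paper's own, but it can be assessed on its merits.

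Parts (i) and (iii) are fine and essentially forced: over an unramified $O_M$ the ring $O_M \otimes_{\mathbb{Z}_p} O_L$ is a product of copies of $O_L$ indexed by $\Sigma$, the conormal exact sequence $0 \to \omega_{G_3} \to \omega_{G_2} \to \omega_{G_1} \to 0$ that Fargues uses is $O_M$-equivariant and hence splits $\sigma$ by $\sigma$, and Fitting ideals of torsion $O_L$-modules are multiplicative in short exact sequences. Nothing to add there.

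For (ii) you have correctly located the genuine content: the ``balance'' assertion, namely that the subquotient of the crystal attached to $G$ has $O_L$-length $h$ in each $\sigma$-isotypic slot, or equivalently $\deg_\sigma G + \deg_\sigma G^D = h$. This is a real statement and is, for instance, false over a field (over $\overline{\mathbb{F}}_p$ one can put an $\mathbb{F}_q$-action on $\alpha_p^n$ concentrated in a single $\sigma$-component); it is saved over the DVR $O_L$ by the rigidity of finite flat group schemes there. But your write-up is circuitous: you first build the diagram of Hodge sequences for an enveloping isogeny $\phi : H \to H'$ and reduce (ii) to $v(\det \mathbb{D}(\phi)_\sigma) = h$, and then propose to establish this last equality via Raynaud plus d\'evissage. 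Once Raynaud is on the table, though, the crystal is no longer needed: for a Raynaud $\mathbb{F}_q$-vector space scheme of $\mathbb{F}_q$-dimension one over $O_L$, the classification writes $\omega_{G,\sigma_i} \simeq O_L/(\gamma_i)$ and $\omega_{G^D,\sigma_i} \simeq O_L/(\delta_i)$ with $\gamma_i \delta_i$ of valuation exactly $1$, which \emph{is} the identity $\deg_{\sigma} G + \deg_{\sigma} G^D = 1$, and then (iii) together with the dual short exact sequence for $G^D$ propagates this along any filtration. So the Dieudonn\'e-crystal scaffolding is dead weight in the argument as proposed; it should either be dropped, or else the balance of the cokernel should be deduced independently (most cleanly from the $O_M$-balance of the Tate module $T = G(\overline{L})$ --- which is automatic because $T$ is a torsion module over the DVR $O_M$ of $\mathbb{Z}_p$-length $[M:\mathbb{Q}_p]h$ --- transported to the crystal via the integral comparison), rather than by quoting the Raynaud computation, which already proves (ii) directly.

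Two smaller points. First, the d\'evissage ``along $0 \to G[p] \to G \to G/G[p] \to 0$'' only reduces to the case $pG = 0$; if $h > 1$ one must further filter the $\mathbb{F}_q$-vector space scheme $G$ by $\mathbb{F}_q$-line subschemes to reach the Raynaud building blocks, which in general requires replacing $L$ by a finite extension. This is harmless because $\deg_\sigma$ is insensitive to base change (it is normalized by $v(p)=1$), but it should be said. Second, the ``divisible $O_M$-hull'' embedding needs a word: one must embed $G$ $O_M$-equivariantly into a $p$-divisible group with $O_M$-action over $O_L$; this is available but is not quite the bare statement that every finite flat group scheme embeds into a $p$-divisible group.
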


We refer to \cite{Bi2} section $1.1.5$ for more details.

\begin{rema}
If we are in the situation of the third point of the proposition $\ref{degreeprop}$, i.e. if we have a morphism of finite flat groups schemes $f : G \to G'$, which is an isomorphism in generic fiber and if $G$ and $G'$ have an action of $O_M$, then it is not true that the partial degrees increase. Indeed, the functions that increase are linear combinations of the partial degrees. For example, if $[M : \mathbb{Q}_p]=2$, there are two partial degrees $\deg_1$ and $\deg_2$, and the functions that increase are $\deg_1 + p \deg_2$ and $p \deg_1 + \deg_2$. See \cite{Bi2} proposition $1.1.33$ for more details.
\end{rema}

\begin{exa}
Let us apply what we have said to our Shimura variety. Let $x=(A,\lambda,\iota,\eta)$ be an $O_L$-point of $X$ (where $L$ is a finite extension of $\mathbb{Q}_p$) and suppose that $\pi$ is a place of $F_0$ above $p$ in case $L$. Then $A[\pi^+]$ has an action of $O_{F_0, \pi}$. Moreover, we have 
$$\deg_\sigma A[\pi^+] = a_\sigma$$
for all $\sigma \in \Sigma_{\pi}$. If $H$ is an $O_F$-stable subgroup of $A[\pi^+]$ of height $fh$, then the orthogonal $H^\bot$ is subgroup of $A[\pi^-]$ of height $f(a+b-h)$. We have $H^\bot \simeq (A[\pi^+] /H)^D$, and thus
$$\deg_{\sigma} H^\bot = (a+b-h) - (a_\sigma - \deg_\sigma H) = b_\sigma - h + \deg_\sigma H$$
for all $\sigma \in \Sigma_\pi$. We see that one has the inequalities
$$\deg_\sigma H \geq h - b_\sigma \quad \text{ and } \quad \deg_\sigma H^\bot \geq b_\sigma - h$$
for all $\sigma \in \Sigma_\pi$.
\end{exa}

\begin{exa}
Suppose now that $\pi$ is in case $U$. Then the group scheme $A[\pi]$ has an action of $O_{F,\pi}$. Recall if $\sigma \in \Sigma_{\pi}$ is an embedding of $F_0$ into $\overline{\mathbb{Q}_p}$ above $\pi$, then there are two embeddings $\sigma_1$ and $\sigma_2$ of $F$ extending $\sigma$. With our previous conventions, we have
$$\deg_{\sigma_1} A[\pi] = a_\sigma \quad \text{ and } \quad \deg_{\sigma_2} A[\pi] = b_\sigma$$
If $H$ is an $O_F$-stable subgroup of $A[\pi]$ of height $2fh$, then the orthogonal $H^\bot$ is subgroup of $A[\pi]$ of height $2f(a+b-h)$. We have $H^\bot \simeq (A[\pi] /H)^{D,c}$, where the subscript $c$ means that the action of $O_F$ on $(A[\pi] /H)^{D,c}$ is the conjugate of the natural one. This comes from the compatibility between the Rosati involution and the complex conjugation. Thus
$$\deg_{\sigma_1} H^\bot = a_\sigma - h + \deg_{\sigma_2} H  \quad \text{ and } \quad \deg_{\sigma_2} H^\bot = b_\sigma - h + \deg_{\sigma_1} H$$
for all $\sigma \in \Sigma_\pi$. We see that one has the inequalities
$$\deg_{\sigma_1} H \geq h - b_\sigma \quad \text{ and } \quad \deg_{\sigma_2} H \geq h - a_\sigma$$
for all $\sigma \in \Sigma_\pi$.

\end{exa}

\subsubsection{Siegel variety}

Let us now recall some facts for the canonical subgroup for the Siegel variety. \\
Let $g \geq 1$ be an integer, and $\mathcal{A}_g$ the Siegel variety. There is a universal abelian scheme $A$ on $\mathcal{A}_g$. There is also a Hasse invariant $Ha$ on $\mathcal{A}_g$. We quote the main result obtained by Fargues \cite{Fa2} on the canonical subgroup.

\begin{prop}
Let $A$ be an abelian scheme of dimension $g$ defined over $O_L$ ($L$ is a finite extension of $\mathbb{Q}_p$). Suppose that the valuation $w$ of the Hasse invariant is strictly less than $1/2$. Then there is a canonical subgroup $H \subset A[p]$, of height $g$, totally isotropic, with
$$\deg H = g - w$$
\end{prop}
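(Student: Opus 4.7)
The plan is to prove the statement by working on the Iwahori cover of the Siegel variety, and using the properties of the degree function exactly as the author sketches in the introduction for the $\mu$-ordinary case.

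First I would set up the moduli-theoretic picture. Let $\mathcal{A}_{g,Iw} \to \mathcal{A}_g$ be the Iwahori cover parametrizing maximal totally isotropic subgroups $H \subset A[p]$ (necessarily of height $g$). On the associated rigid space $\mathcal{A}_{g,Iw,rig}$ one has the degree function $\deg H : \mathcal{A}_{g,Iw,rig} \to [0,g]$ on the universal such $H$. The ordinary-multiplicative locus is exactly $\{\deg H = g\}$, and it maps isomorphically onto the ordinary locus. I would then consider, for any $0 < \varepsilon < 1/2$, the admissible open $U_\varepsilon \subset \mathcal{A}_{g,Iw,rig}$ defined by $\deg H > g - \varepsilon$.

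The core step is uniqueness: $U_\varepsilon$ injects into $\mathcal{A}_{g,rig}$. Here is where the degree inequality does its work. Suppose two maximal isotropic $H_1, H_2 \subset A[p]$ both satisfy $\deg H_i > g-\varepsilon$. Set $k = \mathrm{ht}(H_1 \cap H_2) - g$ (negative) so that $\mathrm{ht}(H_1+H_2) = g + |k|$ and $\mathrm{ht}(H_1 \cap H_2) = g - |k|$, with $|k|\geq 1$ if $H_1 \neq H_2$. Since $H_1 + H_2 \subset A[p]$ and by self-duality $\deg A[p] = g$, Proposition \ref{degreeprop} gives $\deg(H_1+H_2) \leq g$, while $\deg(H_1 \cap H_2) \leq g - |k|$. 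The Corollary following $\ref{degreeprop}$ then yields
$$2g - 2\varepsilon < \deg H_1 + \deg H_2 \leq \deg(H_1+H_2) + \deg(H_1 \cap H_2) \leq 2g - |k| \leq 2g - 1,$$
contradicting $\varepsilon < 1/2$. Hence $H_1 = H_2$, and the projection $U_\varepsilon \to \mathcal{A}_{g,rig}$ is a monomorphism.

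For existence and the formula $\deg H = g - w$, I would relate $w$ directly to $\deg H$ using the Hodge--Tate map on $A[p]$. On the ordinary-multiplicative locus the identity $\deg H = g$ and $w=0$ agree; the point is to propagate this. Concretely, one considers the exact sequence $0 \to H \to A[p] \to A[p]/H \to 0$ on $U_\varepsilon$: since $H$ is maximal isotropic we have $(A[p]/H)^D \simeq H$, and the quotient $A[p]/H$ is generically étale. The Hodge--Tate morphism $\omega_{(A[p]/H)^D} \to (A[p]/H)(O_L/p)$ has determinant, up to units, a power of the Hasse invariant; comparing conormal modules via the Fitting-ideal definition of the degree then identifies $g - \deg H$ with the valuation of this determinant, i.e.\ with $w$. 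Combined with uniqueness this gives, for each geometric point with $w < 1/2$, a unique $H$ with $\deg H = g - w > g - 1/2$, so that $\{w < 1/2\} \subset \mathrm{image}(U_{1/2})$, and the inverse section produces the canonical subgroup.

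The main obstacle is this last step: uniqueness is a clean degree computation, but the identification $\deg H = g - w$ (and hence the existence on the full locus $\{w < 1/2\}$ with the sharp bound $1/2$) requires a nontrivial analysis of the Hodge--Tate map, or equivalently a Dieudonné-theoretic comparison between the Hasse invariant and the conormal sheaf of $A[p]/H$. This is precisely the technical heart of Fargues' work \cite{Fa2} and would be the part requiring real work rather than formal manipulation of the degree function.
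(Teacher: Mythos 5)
The paper does not prove this proposition at all: it is explicitly quoted as Fargues' theorem from \cite{Fa2} ("We quote the main result obtained by Fargues\dots"), and the proof is never reproduced. What you have written splits the statement into a uniqueness half and an existence-plus-formula half. The uniqueness half of your argument is correct, and in fact it coincides with the proof the paper gives of the \emph{next} proposition (the one asserting that at most one height-$g$ subgroup of $A[p]$ has degree $> g - 1/2$): same use of the degree inequality $\deg H_1 + \deg H_2 \leq \deg(H_1+H_2) + \deg(H_1\cap H_2)$, same bounds $\deg(H_1+H_2)\leq g$ and $\deg(H_1\cap H_2)\leq g-|k|$, same contradiction. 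So that part is fine and you have essentially reproduced a proof that appears immediately afterward in the text.

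The existence half, together with the precise identity $\deg H = g - w$ and the sharpness of the bound $w < 1/2$, is a genuine gap, and you are honest about it: you describe the shape of the Hodge--Tate comparison that should relate $g - \deg H$ to the valuation of the Hasse invariant, but you do not carry it out, and you rightly observe that this is the technical heart of \cite{Fa2}. The gap is real in the sense that the sketch gives no actual construction of $H$ on the locus $\{w<1/2\}$ (the uniqueness argument only tells you there is at most one candidate; it does not produce one), and the passage from the Fitting-ideal definition of $\deg$ to the valuation of the determinant of the Hodge--Tate map requires a nontrivial Dieudonn\'e-theoretic input, including control over the cokernel of the Hodge--Tate map modulo $p^{1-w}$, which is exactly what Fargues establishes and which your sketch does not reconstruct. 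In short: your uniqueness step is correct and matches the paper's own (subsequent) argument; for existence and the formula you correctly identify what is needed but do not prove it, which mirrors the paper's own choice to cite Fargues rather than reprove the result.
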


Let $\mathcal{A}_{g,rig}$ be the rigid space associated to $\mathcal{A}_g$. Then the ordinary locus of $\mathcal{A}_{g,rig}$ is defined as the locus where the associated abelian scheme is ordinary ; it is also the locus where the Hasse invariant is invertible. The proposition says that on a strict neighborhood of the ordinary locus, there exists a canonical subgroup of high degree in $A[p]$. We propose a simple reformulation of this property. We have the following observation.

\begin{prop}
Let $A$ be an abelian scheme of dimension $g$ defined over $O_L$ ($L$ is a finite extension of $\mathbb{Q}_p$). There exists at most one subgroup $H$ of height $g$ of $A[p]$ with
$$\deg H > g-\frac{1}{2}$$
\end{prop}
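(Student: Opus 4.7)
The plan is to argue by contradiction, using the subadditivity corollary just proved: for two finite flat subgroups $H_1, H_2$ of a finite flat group scheme, $\deg H_1 + \deg H_2 \leq \deg(H_1 + H_2) + \deg(H_1 \cap H_2)$. I would apply this with ambient group $A[p]$ to two hypothetical distinct subgroups $H_1 \neq H_2$ of height $g$, both satisfying $\deg H_i > g - 1/2$, and try to bound the right-hand side by $2g - 1$, contradicting $\deg H_1 + \deg H_2 > 2g - 1$.

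First I would record two elementary facts about $A[p]$. The conormal sheaf satisfies $\omega_{A[p]} \simeq \omega_A / p\omega_A$, which is free of rank $g$ over $O_L/p$; hence $\deg A[p] = g$, while $\mathrm{ht}\,A[p] = 2g$. Second, a finite flat subgroup of $A[p]$ is recovered from its generic fibre as its flat closure, so $H_1 \neq H_2$ forces $H_{1,\eta} \neq H_{2,\eta}$ as \'etale subgroups of $A[p]_\eta$ of order $p^g$. Therefore $\mathrm{ht}(H_1 \cap H_2) \leq g - 1$, whence by the first bullet of Proposition \ref{degreeprop}, $\deg(H_1 \cap H_2) \leq g - 1$.

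The step that requires a bit of care is the bound $\deg(H_1 + H_2) \leq g$, which is where the fact that the $H_i$ live inside $A[p]$ (and not merely inside some larger finite flat group) is actually used. The idea is to apply additivity to the exact sequence $0 \to H_2 \to H_1 + H_2 \to (H_1 + H_2)/H_2 \to 0$. The quotient embeds into $A[p]/H_2$, and by additivity (together with the positivity of the degree of the cokernel) its degree is at most $\deg(A[p]/H_2) = g - \deg H_2$. Adding $\deg H_2$ back gives $\deg(H_1 + H_2) \leq g$.

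Combining the two estimates through the corollary yields $\deg H_1 + \deg H_2 \leq g + (g - 1) = 2g - 1$, contradicting $\deg H_1 + \deg H_2 > 2(g - 1/2) = 2g - 1$. The only real subtlety I foresee is the bookkeeping to ensure that $H_1 \cap H_2$ and $H_1 + H_2$ are interpreted as finite flat group schemes in the sense for which the additivity and subadditivity statements are valid; past that, the whole argument is a direct numerical manipulation of the degree axioms, with no geometric input beyond the computation $\deg A[p] = g$.
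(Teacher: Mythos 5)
Your proof is correct and follows essentially the same route as the paper's: apply the subadditivity corollary to $H_1, H_2 \subset A[p]$, bound $\deg(H_1+H_2) \leq \deg A[p] = g$ and $\deg(H_1\cap H_2) \leq \mathrm{ht}(H_1\cap H_2) \leq g-1$, and conclude $2g-1 < \deg H_1 + \deg H_2 \leq 2g-1$, a contradiction. The only difference is that you spell out the justifications ($\omega_{A[p]}\simeq \omega_A/p\omega_A$ to get $\deg A[p]=g$, and the exact-sequence argument for $\deg(H_1+H_2)\leq g$) which the paper leaves implicit; the paper simply states $\deg(H_1+H_2)\leq \deg A[p]=g$, which follows at once from additivity applied to the subgroup inclusion $H_1+H_2 \subset A[p]$ rather than to $H_2 \subset H_1+H_2$.
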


\begin{proof}
Suppose not, and let $H_1$ and $H_2$ be two subgroups, with $\deg H_i > g-1/2$. Then we have 
$$2g - 1 < \deg H_1 + \deg H_2  \leq \deg (H_1+H_2) + \deg (H_1 \cap H_2)$$
But $\deg (H_1 + H_2) \leq \deg A[p] =g$, and since $H_1 \cap~H_2$ is of height $h \leq~g-1$, we have $\deg (H_1 \cap~H_2) \leq~g-1$. We get a contradiction.
\end{proof}

This can be used to prove the existence of the canonical subgroup in the following way. Let $\mathcal{A}_g'$ be the Siegel variety parametrizing a $g$-dimensional abelian scheme with polarization and a subgroup $H$ totally isotropic of height $g$. We have a map $f : \mathcal{A}_g' \to \mathcal{A}_g$ corresponding to forgetting $H$. If we denote $\mathcal{A}_{g,rig}'$ the rigid space associated to $\mathcal{A}_g'$, we still have a morphism $f : \mathcal{A}_{g,rig}' \to \mathcal{A}_{g,rig}$. Define $X_r = \{x \in \mathcal{A}_{g,rig}', \deg H(x) \geq g-r \}$ for any rational $r$, it is an admissible open of $\mathcal{A}_{g,rig}'$. Then the ordinary locus of $\mathcal{A}_{g,rig}$ is $f(X_0)$, and it follows from \cite{Bi1} proposition $4.1.7$ that the $(f(X_r))_{r>0}$ form a basis of strict neighborhoods of the ordinary locus (the map $f$ is finite étale, and the $(X_r)_{r>0}$ are strict neighborhoods of $X_0$). The previous proposition shows that on $f(X_r)$ there is exactly one subgroup of height $g$ of degree greater or equal to $g-r$ for $r<1/2$, this is the canonical subgroup. 

\subsubsection{Linear case}

Now let's get back to our Shimura varieties. Suppose we are in case L. Then we have the following proposition.

\begin{prop} \label{group_A}
Let $L$ be a finite extension of $\mathbb{Q}_p$, and $x=(A,\lambda,\iota,\eta)$ an $O_L$-point of $X$. Let $1 \leq i \leq f$ be an integer. Then there exists at most one subgroup $H \subset A[\pi^+]$ stable by $O_F$ of height $f a_i$ such that
$$\deg H > \sum_{j=1}^f \min(a_j,a_i) - \frac{1}{2}$$ 
\end{prop}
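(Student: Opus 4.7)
The approach is to mimic the Siegel-case argument given earlier, but using partial degrees and integrality. Suppose for contradiction that there exist two distinct $O_F$-stable subgroups $H_1, H_2 \subset A[\pi^+]$ of height $fa_i$ each satisfying the inequality. Set $K = H_1 \cap H_2$ and $S = H_1 + H_2$ (taking schematic closures); both are $O_F$-stable subgroups of $A[\pi^+]$. Write their heights as $fh_K$ and $fh_S$, so that $h_K + h_S = 2a_i$ and, since $H_1 \ne H_2$, one has $h_K \leq a_i - 1$ and $h_S \geq a_i + 1$. The previous corollary then gives
$$\deg H_1 + \deg H_2 \leq \deg K + \deg S.$$

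The plan is to bound the right-hand side using the partial degrees. For any $O_F$-stable subgroup $H' \subset A[\pi^+]$ of height $fh'$, the relation $\deg_\sigma H' + \deg_\sigma (A[\pi^+]/H') = a_\sigma$ together with the general bounds $0 \leq \deg_\sigma \leq h'$ (from Cartier duality) gives
$$\deg_\sigma H' \leq \min(h', a_\sigma),$$
hence $\deg H' \leq \sum_{\sigma} \min(h', a_\sigma)$. Applying this to $K$ and $S$ yields
$$\deg K + \deg S \leq \sum_{j=1}^{f}\bigl(\min(h_K, a_j) + \min(h_S, a_j)\bigr).$$

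The key elementary observation, to be checked case-by-case on the position of $a_j$ relative to $h_K < a_i < h_S$, is that for each $j$ one has $\min(h_K, a_j) + \min(h_S, a_j) \leq 2\min(a_i, a_j)$. Moreover, for the index $j = i$ itself we have $a_j = a_i$, so $\min(h_K, a_i) + \min(h_S, a_i) = h_K + a_i$, whereas $2\min(a_i, a_i) = 2 a_i$; the deficit is $a_i - h_K \geq 1$. Hence
$$\deg K + \deg S \leq 2\sum_{j=1}^{f}\min(a_i, a_j) - 1,$$
contradicting the assumption $\deg H_1 + \deg H_2 > 2\sum_j \min(a_i, a_j) - 1$.

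The main obstacle is the last step: one must verify that the term-by-term inequality is strict by at least $1$. This is where the integrality of the heights $h_K, h_S$ and the existence of an index $j$ with $a_j = a_i$ are both essential; without this combination the bound $\tfrac{1}{2}$ in the statement would not suffice. Everything else is a routine manipulation of the degree formalism recalled in the preceding subsection.
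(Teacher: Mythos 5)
Your proof is correct and follows essentially the same route as the paper: both start from the superadditivity inequality $\deg H_1 + \deg H_2 \leq \deg(H_1 \cap H_2) + \deg(H_1 + H_2)$ and bound each degree via $\deg_\sigma H' \leq \min(h',a_\sigma)$, then extract a deficit of at least $a_i - h_K \geq 1$ from the term at $j=i$. The only difference is cosmetic: you package the estimate as a termwise inequality $\min(h_K,a_j)+\min(h_S,a_j)\leq 2\min(a_i,a_j)$, whereas the paper splits the two sums at $j\leq i$ versus $j>i$ and adds them up — the two computations are the same.
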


\begin{proof} 
Suppose not, and let $H_1,H_2$ be two such subgroups. Let us denote by $fh$ the height of $H_1 \cap H_2$ ; the height of $H_1 + H_2$ is then $f(2a_i-h)$. We have
$$\deg H_1 + \deg H_2  \leq \deg (H_1+H_2) + \deg (H_1 \cap H_2)$$
But 
$$\deg(H_1 + H_2) \leq \sum_{j=1}^f \min(a_j,2a_i-h) \leq  \sum_{j=1}^i a_j + \sum_{j=i+1}^f (2a_i-h)$$
and
$$\deg (H_1 \cap H_2) \leq \sum_{j=1}^f \min(a_j,h) \leq \sum_{j=1}^{i-1} a_j + \sum_{j=i}^f h$$
We finally get
$$\deg H_1 + \deg H_2 \leq 2\sum_{j=1}^{i-1} a_j + 2 \sum_{j=i+1}^f a_i + a_i +h \leq 2 \sum_{j=1}^f \min(a_j,a_i) - 1$$
since $h \leq a_i-1$. We get a contradiction.
\end{proof}

The proposition shows that there exists at most one subgroup of height $fa_i$ and of big degree. The next proposition shows that if two such subgroups exists (with different heights), then we automatically have an inclusion.

\begin{prop} \label{incl_A}
Let $i <j$ be two integers between $1$ and $f$. Let $x=(A,\lambda,\iota,\eta)$ be an $O_L$-point of $X$, and suppose there exists that for $l \in \{i,j\}$ a subgroup $H_l \subset A[\pi^+]$ stable by $O_F$ of height $f a_l$ such that
$$\deg H_l > \sum_{k=1}^f \min(a_k,a_l) - \frac{1}{2}$$
Then we have $H_i \subset H_j$.
\end{prop}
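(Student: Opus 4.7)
The plan is to mimic the contradiction argument used in the proof of Proposition \ref{group_A}: I would assume $H_i \not\subset H_j$ and derive a numerical contradiction from the corollary comparing $\deg H_i + \deg H_j$ with $\deg(H_i+H_j) + \deg(H_i \cap H_j)$.

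First I would set $fh$ to be the height of $H_i \cap H_j$; since $H_i \cap H_j \subset H_i$ has height $fa_i$, we have $h \leq a_i$, and the hypothesis $H_i \not\subset H_j$ forces $h \leq a_i - 1$. Correspondingly $H_i + H_j$ is an $O_F$-stable subgroup of $A[\pi^+]$ of height $f(a_i+a_j-h)$. Applying the corollary yields
$$\deg H_i + \deg H_j \leq \deg(H_i+H_j) + \deg(H_i \cap H_j).$$

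Next I would bound each term on the right using the following observation: for any $O_F$-stable finite flat subgroup $H \subset A[\pi^+]$ of height $fh'$, the partial degree satisfies $\deg_\sigma H \leq \min(a_\sigma, h')$ for each $\sigma \in \Sigma_\pi$. Indeed, $\deg_\sigma H \leq h'$ is the general upper bound, while $\deg_\sigma H \leq a_\sigma$ follows from additivity applied to $0 \to H \to A[\pi^+] \to A[\pi^+]/H \to 0$ combined with positivity of partial degrees. Summing over $\sigma$ gives $\deg H \leq \sum_k \min(a_k, h')$. Applying this to $H_i+H_j$ and to $H_i \cap H_j$, and combining with the hypothesis $\deg H_i + \deg H_j > \sum_k \min(a_k,a_i) + \sum_k \min(a_k,a_j) - 1$, the proposition reduces to the elementary inequality
$$\sum_{k=1}^f \bigl[\min(a_k,a_i+a_j-h)+\min(a_k,h)\bigr] \leq \sum_{k=1}^f \bigl[\min(a_k,a_i)+\min(a_k,a_j)\bigr] - 1$$
valid for integers $0 \leq h \leq a_i - 1$ with $a_i \leq a_j$.

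This final combinatorial inequality is the main obstacle. I would verify it by a term-by-term comparison, splitting into cases according to the position of $a_k$ relative to the thresholds $h, a_i, a_j, a_i+a_j-h$. In each regime the left-hand term is at most the right-hand term (the essential inputs being $h \leq a_i \leq a_j$ and $a_k \leq a_j \leq a_i+a_j-h$ when $a_k \leq a_j$). For the index $k=i$ one has $h < a_k = a_i$, and the left-hand term is $a_i + h$ while the right-hand term is $2a_i$, producing a strict gap of $a_i - h \geq 1$ that accounts for the $-1$ and thus closes the contradiction.
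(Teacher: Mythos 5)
Your proposal is correct and follows essentially the same strategy as the paper's own proof: assume $H_i \not\subset H_j$ so that $h \leq a_i - 1$, apply the subadditivity corollary $\deg H_i + \deg H_j \leq \deg(H_i+H_j) + \deg(H_i\cap H_j)$, bound each term on the right via $\deg G \leq \sum_\sigma \min(a_\sigma, h')$ for a subgroup $G$ of height $fh'$, and derive a numerical contradiction. The only cosmetic difference is in how the final inequality is settled: the paper uses coarser sum-splitting estimates and obtains a deficit of $j-i+1$, whereas you do a term-by-term comparison and observe the strict drop of $a_i - h \geq 1$ at $k=i$; both suffice, and the underlying argument is the same.
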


\begin{proof}
Let $fh$ denote the height of $H_i \cap H_j$. We have the following inequalities.
$$\deg (H_i + H_j) \leq \sum_{k=1}^f \min (a_k,a_i + a_j - h) \leq \sum_{k=1}^{j} a_k + \sum_{k=j+1}^f (a_i+a_j-h)$$
$$\deg (H_i \cap H_j) \leq \sum_{k=1}^f \min (a_k,h) \leq \sum_{k=1}^{i-1} a_k + \sum_{k=i}^f h$$
We the get
$$\deg H_i + \deg H_j \leq \deg (H_i + H_j) + \deg (H_i \cap H_j) \leq 2 \sum_{k=1}^{i-1} a_k + \sum_{k=i}^j (a_k +h) + \sum_{k=j+1}^f (a_i+a_j)$$
If we do not have the inclusion $H_i \subset H_j$, then $h \leq a_i-1$. We then get
$$\deg H_i + \deg H_j \leq \sum_{k=1}^f (\min(a_k,a_i) + \min(a_k,a_j)) - (j-i+1)$$
We get a contradiction with the hypothesis stating that $H_i$ and $H_j$ have big degrees.
\end{proof}

As a consequence, we directly get the existence of canonical subgroups for $\mu$-ordinary abelian scheme. Let $s$ be the cardinal of $\{a_1, \dots, a_f \} \cap [1,a+b-1]$, and denote by $A_1 < \dots < A_s$ the different elements of this set.

\begin{coro} \label{coro_A}
Let $L$ be a finite extension of $\mathbb{Q}_p$, and let $x=(A,\lambda,\iota,\eta)$ be an $O_L$-point of $X$. Assume that $x$ is $\mu$-ordinary (i.e. the special fiber of $A$ is $\mu$-ordinary). Then for any integer $1 \leq k \leq s$, there exists a unique subgroup $H_k \subset A[\pi^+]$ of height $fA_k$, with
$$\deg_\sigma H_k = \min(a_\sigma,A_k)$$
for all $\sigma \in \Sigma_\pi$.
\end{coro}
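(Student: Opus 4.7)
The plan is to prove uniqueness by a direct application of Proposition~\ref{group_A}, and existence by lifting the explicit decomposition of Proposition~\ref{EO_l} through the Iwahori cover $X_{Iw,\pi}\to X$.

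\emph{Uniqueness.} If $H_k\subset A[\pi^+]$ is $O_F$-stable of height $fA_k$ with the stated partial degrees, its total degree equals $\sum_{j=1}^f\min(a_j,A_k)$, which strictly exceeds $\sum_{j=1}^f\min(a_j,A_k)-1/2$. Applying Proposition~\ref{group_A} with any index $i$ such that $a_i=A_k$ then rules out a second such subgroup.

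\emph{Existence.} I would first work in the special fiber. Proposition~\ref{EO_l} provides a decomposition $\bar A[\pi^+]\simeq\prod_{i=0}^{f}BT_{\underline{\varepsilon}_i}^{a_{i+1}-a_i}[\pi^+]$. Setting $I_k=\max\{j:a_j\leq A_k\}$, so that $a_{I_k}=A_k$, define
$$\bar H_k=\prod_{i=0}^{I_k-1}BT_{\underline{\varepsilon}_i}^{a_{i+1}-a_i}[\pi^+],$$
an $O_F$-stable subgroup of $\bar A[\pi^+]$ of height $fA_k$; using $\varepsilon_{i,j}=1$ iff $j>i$, one checks that $\dim_k\omega_{\bar H_k,\sigma}=\min(a_\sigma,A_k)$. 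Refine the $\bar H_k$ to a full Iwahori flag to obtain a $k$-point of $X_{Iw,\pi}$ above the reduction $\bar x$ of $x$. Since $X_{Iw,\pi}\to X$ is finite, this point lifts (after extending $L$ to a finite extension $L'$) to an $O_{L'}$-point above $x$, producing subgroups $H_k\subset A[\pi^+]$ over $O_{L'}$, of height $fA_k$, lifting the $\bar H_k$.

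\emph{Partial degree computation.} The splitting $\bar A[\pi^+]\simeq\bar H_k\times\bar A[\pi^+]/\bar H_k$ from Proposition~\ref{EO_l} makes the natural map $\omega_{\bar A[\pi^+]/\bar H_k,\sigma}\to\omega_{\bar A[\pi^+],\sigma}$ a split injection of $k$-vector spaces, with image of dimension $m:=\max(0,a_\sigma-A_k)=a_\sigma-\min(a_\sigma,A_k)$. Choose elements $v_1,\dots,v_m\in\omega_{A[\pi^+]/H_k,\sigma}$ whose reductions form a basis; by Nakayama they generate the module, and their images in $\omega_{A[\pi^+],\sigma}\simeq(O_{L'}/p)^{a_\sigma}$ have $k$-linearly independent reductions. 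An iterated Nakayama argument over the local Artinian ring $O_{L'}/p$ then shows that these images generate a free rank-$m$ direct summand. Consequently $\omega_{H_k,\sigma}\simeq(O_{L'}/p)^{\min(a_\sigma,A_k)}$, whose zeroth Fitting ideal is $(p^{\min(a_\sigma,A_k)})$, giving $\deg_\sigma H_k=\min(a_\sigma,A_k)$.

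The main obstacle is this last partial-degree computation: the partial degree over a possibly ramified $O_{L'}$ is not automatically determined by the dimension of the cotangent module at the closed point. The $\mu$-ordinary hypothesis is essential precisely here; via Proposition~\ref{EO_l} it delivers the splitting in the special fiber that forces the cotangent module of $H_k$ to be a free $O_{L'}/p$-module, ruling out the ramification-induced torsion that would otherwise yield strict inequality.
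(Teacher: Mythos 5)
Your uniqueness argument via Proposition~\ref{group_A} is exactly the paper's, and your Nakayama/partial-degree computation in the final step would be a valid way to compute $\deg_\sigma H_k$ \emph{given} a finite flat $H_k$ over $O_{L'}$ reducing to $\bar H_k$. The gap is in the lifting step. Finiteness of $X_{Iw,\pi}\to X$ does not by itself imply that the chosen $k(L)$-point of $X_{Iw,\pi}$ above $\bar x$ lifts to an $O_{L'}$-point above $x$: a finite $O_L$-algebra may have maximal ideals of its special fiber which are not specializations of any point of the generic fiber (they lie on components supported entirely over the closed point). This is not a pathology one can rule out for free here, since $X_{Iw,\pi}\to X$ is genuinely ramified. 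To make your argument work you would need to show that the fiber $Z=X_{Iw,\pi}\times_{X,x}\mathrm{Spec}\,O_L$ is flat over $O_L$ (equivalently, that the chosen flag is in the closure of the generic fiber), and neither is established in the paper nor obvious. Moreover, since $\bar A[\pi^+]$ contains bi-infinitesimal factors $BT_{\underline{\varepsilon}_i}[\pi^+]$ with $0<i<f$, it admits many finite subgroups over $\bar{\mathbb{F}}_p$ that cannot be lifted to $O_{\overline{\mathbb{Q}}_p}$ (the generic fiber $A[\pi^+]\otimes L$ is \'etale, so has only finitely many subgroups); one has to argue specifically that the sub-product $\bar H_k$ is among those that lift.

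The paper circumvents this entirely: it invokes Moonen's Proposition 2.1.9, which gives a filtration $0\subset X_1\subset\dots\subset X_{f+1}=A[(\pi^+)^\infty]$ by $p$-divisible groups over $O_{\overline{\mathbb{Q}}_p}$, not merely over $\bar{\mathbb{F}}_p$. Setting $H_i=X_i[\pi^+]$ then produces finite flat subgroups already in mixed characteristic, and the $\omega_{Y_i,\sigma}$ for $Y_i=X_{i+1}/X_i$ are free by the $p$-divisible structure, so the partial degree follows from additivity with no Nakayama argument needed. In other words, your appeal to Proposition~\ref{EO_l} (Moonen's Theorem 1.3.7, which concerns only $A[\pi^+]$ over $k$) is too weak an input; the paper needs the stronger Proposition 2.1.9 precisely because it does the lifting for you. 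If you instead supply a flatness argument for $X_{Iw,\pi}\to X$ at $\mu$-ordinary points, your route would become a genuine alternative, trading Moonen's mixed-characteristic filtration for a local-model flatness input.
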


\begin{proof}
We can work over $\overline{\mathbb{Q}}_p$. Since $A$ is $\mu$-ordinary, we have by \cite{Mo} proposition $2.1.9$ a filtration on $A[(\pi^+)^\infty]$ :
$$ X_0 = 0 \subset X_1 \subset \dots \subset X_{f+1} = A[(\pi^+)^\infty]$$
with $X_i$ $p$-divisible groups such that $(X_{i+1} / X_i) \times \overline{\mathbb{F}}_p \simeq BT_{\underline{\varepsilon}_i}^{a_{i+1}-a_i}$ for $0 \leq i \leq f$. Let $Y_i = X_{i+1} / X_i$ for $0 \leq i \leq f$. Then $Y_i$ is a $p$-divisible group over the ring of integers of $\overline{\mathbb{Q}}_p$. The module $\omega_{Y_i}$ decomposes into  $\oplus_{\sigma \in \Sigma_\pi} \omega_{Y_i, \sigma}$, and each $\omega_{Y_i, \sigma}$ is free over the ring of integers of $\overline{\mathbb{Q}}_p$. Recall that we have chosen an ordering for the set $\Sigma_\pi = \{\sigma_1, \dots, \sigma_f \}$. From the description of the special fiber of $Y_i$, one sees that $\omega_{Y_i, \sigma_j}$ is $0$ if $j \leq i$, and is free of rank $a_{i+1}- a_i$ over the ring of integers of $\overline{\mathbb{Q}}_p$, if $j \geq i+1$. Thus $\deg_{\sigma_j} Y_i [\pi^+]$ is $0$ if $j \leq i$ and $a_{i+1} - a_i$ otherwise. \\
Let $H_i = X_i[\pi^+]$ for $0 \leq i \leq f$; it is a finite flat subgroup of $A[\pi^+]$ of height $f a_i$. Moreover, we have
$$\deg_{\sigma_j} H_i = \sum_{k=1}^i \deg_{\sigma_j} H_{k} / H_{k-1} = \sum_{k=1}^{\min(i,j)} (a_k - a_{k-1}) = a_{\min(i,j)} = \min(a_i,a_j)$$
for all $i$ and $j$ between $1$ and $f$. This gives the existence of the desired subgroups. The uniqueness follows from the previous proposition.
\end{proof}

\begin{rema}
We also have the following description of the canonical subgroups in the special fiber. Let $L$ be a finite extension of $\mathbb{Q}_p$, and let $x=(A,\lambda,\iota,\eta)$ be a $\mu$-ordinary point of $X$ defined over $O_L$. Let $A_s$ denote the special fiber of $A$ ; then the Frobenius acts on $A_s[(\pi^+)^\infty]$, and we can form the subgroups 
$$C_i := (\pi^+)^{f-i} A_s[F^f,(\pi^+)^{f-i+1}]$$
for $1 \leq i \leq f$, where $F$ is the Frobenius. Then we have $0 \subset C_1 \subset \dots \subset C_f \subset A_s[\pi^+]$, and the special fiber of the canonical subgroups are equal to one of the $C_i$. More precisely, we have for $1 \leq k \leq s$, $H_k \times k(L) = C_{r(k)}$, with $k(L)$ the residue field of $L$ and $r(k) = \min (l, a_l=A_k)$.
\end{rema}

We can then define the relevant degree functions on $X_{Iw,\pi}$. For each integer $k$, define 
$$d_k = \sum_{\sigma \in \Sigma_\pi} \min(a_\sigma,A_k)$$
Let $X_{Iw,\pi,rig}$ be the associated rigid space. We define the degree function $Deg : X_{Iw,\pi,rig} \to~\prod_{k=1}^s [0,d_k]$ on this space by

$$Deg (A,\lambda,\iota,\eta,H_\bullet) := (\deg H_{A_k})_{1 \leq k \leq s}$$

We also define the $k$-th degree function by $Deg_k (A,\lambda,\iota,\eta,H_\bullet) := \deg H_{A_k}$ for $1 \leq k \leq s$.

\begin{rema}
The integers $s,d_k$ as well as the functions $Deg$ and $Deg_k$ depend on the place $\pi$. If the context is clear, we choose not to write the the dependence on $\pi$ to lighten the notations.
\end{rema}

We then have the following description of the $\mu$-ordinary locus.

\begin{prop} \label{ordgroup_l}
The space $X_{rig}^{\mu-\pi-ord} \subset X_{rig}$ is exactly the image of $Deg^{-1} (\{d_1\} \times \dots \times \{d_s\})$ by the map $X_{Iw,\pi,rig} \to X_{rig}$.
\end{prop}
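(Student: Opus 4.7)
I would prove both inclusions separately. For the forward inclusion, take $x = (A, \lambda, \iota, \eta) \in X_{rig}^{\mu-\pi-ord}$, defined over some $O_L$. Since the specialization of $x$ is $\mu$-ordinary, Corollary~\ref{coro_A} provides a chain $H_{A_1} \subset \cdots \subset H_{A_s}$ in $A[\pi^+]$ with $\deg_\sigma H_{A_k} = \min(a_\sigma, A_k)$, so that $\deg H_{A_k} = d_k$. To lift $x$ to a point of $X_{Iw,\pi,rig}$, I complete this partial filtration to a full Iwahori flag $0 \subset H_1 \subset \cdots \subset H_{a+b} = A[\pi^+]$: after a finite étale extension, each quotient $H_{A_k}/H_{A_{k-1}}$ is locally free of rank $A_k - A_{k-1}$ over $O_F/\pi^+$, and thus admits a complete $(O_F/\pi^+)$-linear flag interpolating the required ranks. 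Pulling back yields a point of $X_{Iw,\pi,rig}$ above $x$ with $Deg = (d_1, \dots, d_s)$.

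For the backward inclusion, consider $(A, \lambda, \iota, \eta, H_\bullet) \in X_{Iw,\pi,rig}$ with $\deg H_{A_k} = d_k$ for all $k$. The general bounds on partial degrees give $\deg_\sigma H_{A_k} \le A_k$ and, by additivity applied to $H_{A_k} \subset A[\pi^+]$, also $\deg_\sigma H_{A_k} \le \deg_\sigma A[\pi^+] = a_\sigma$. Hence $\deg_\sigma H_{A_k} \le \min(a_\sigma, A_k)$. Combined with the equality $\sum_\sigma \deg_\sigma H_{A_k} = d_k = \sum_\sigma \min(a_\sigma, A_k)$, this forces $\deg_\sigma H_{A_k} = \min(a_\sigma, A_k)$ for every $\sigma$ and $k$. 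Setting $G_k := H_{A_k}/H_{A_{k-1}}$ (with $H_{A_0} = 0$ and $H_{A_{s+1}} = A[\pi^+]$), additivity yields $\deg_\sigma G_k = A_k - A_{k-1}$ whenever $a_\sigma \ge A_k$, and $\deg_\sigma G_k = 0$ otherwise---since no $a_\sigma$ lies strictly between $A_{k-1}$ and $A_k$.

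The remaining task---the main obstacle---is to convert these saturated partial-degree statements into the decomposition of $\bar A[\pi^+]$ appearing in Proposition~\ref{EO_l}, and so certify that the specialization is $\mu$-ordinary. I would argue that a finite flat $O_F/\pi^+$-stable subquotient whose partial degree at $\sigma$ is maximal (resp.\ zero) has multiplicative (resp.\ étale) $\sigma$-isotypic component on the special fiber; this identifies each $\bar G_k$ with $BT_{\underline{\varepsilon}_{r(k)}}[\pi^+]^{A_k - A_{k-1}}$ for the sign sequence recording which embeddings have $a_\sigma \ge A_k$. The filtration then splits on the special fiber because consecutive graded pieces have incompatible multiplicative/étale types in each $\sigma$-component, giving the required product decomposition. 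This last step is essentially a Dieudonné-theoretic computation and I expect it to rely on the same analysis of degrees versus $p$-divisible group structure that underlies Propositions~\ref{group_A} and~\ref{incl_A}, together with the dictionary between Fargues' degree and multiplicative/étale subschemes in characteristic $p$ (cf.~\cite{Fa}, \cite{Bi2}).
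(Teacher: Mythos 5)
Your forward inclusion matches the paper, and in the backward direction you correctly force $\deg_\sigma H_{A_k} = \min(a_\sigma, A_k)$ exactly as the paper does. The divergence --- and the gap --- is in how you try to pass from these partial-degree equalities to the product decomposition of Proposition~\ref{EO_l}.

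You work with the graded pieces $G_k := H_{A_k}/H_{A_{k-1}}$, identify them with the expected $BT_{\underline{\varepsilon}}[\pi^+]$'s, and then claim the filtration splits on the special fiber \emph{because consecutive graded pieces have incompatible multiplicative/\'etale types in each $\sigma$-component}. This step does not hold up. First, a finite flat group scheme with $O_F$-action does not decompose $\sigma$-isotypically (only $\omega_G$ does), so ``multiplicative at $\sigma$'' is not an invariant of the group that you can read off slot by slot; the Dieudonn\'e module's $F$ and $V$ shift the $\sigma$-grading. Second, and more seriously, most of the $\bar G_k$ are \emph{bi-infinitesimal}, not multiplicative or \'etale; already in the simplest genuinely $\mu$-ordinary example (say $f=2$, signatures $(1,3),(2,2)$, so $A_1=1$, $A_2=2$) one has $\bar G_1$ multiplicative, $\bar G_2 \simeq BT_{(0,1)}[\pi^+]$ bi-infinitesimal, $\bar G_3$ \'etale, and the extension $0\to \bar G_1\to \bar H_{A_2}\to \bar G_2\to 0$ is an extension of connected by connected. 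The connected-\'etale sequence is silent, and ``incompatible types'' gives you nothing. You flag this as the step you would still need to argue, so the plan as written is incomplete.

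The paper sidesteps the splitting-on-the-special-fiber question entirely by producing an \emph{integral} splitting up front. Set $H_0=0$, $H_{s+1}=A[\pi^+]$, and after enlarging $L$ take the schematic closure $H_j'$ of a generic-fiber complement of $H_{j-1}$ in $H_j$. The generic isomorphism $H_{j-1}\to H_j/H_j'$ increases the degree; but $H_j/H_j'$ is a subquotient of $A[\pi^+]$ of height $fA_{j-1}$, so $\deg_\sigma(H_j/H_j')\le \min(a_\sigma,A_{j-1})$ and hence $\deg(H_j/H_j')\le d_{j-1}=\deg H_{j-1}$. Equality forces the map to be an isomorphism over $O_L$, so $H_j\simeq H_j'\times H_{j-1}$, and inductively $A[\pi^+]\simeq H_1'\times\cdots\times H_{s+1}'$. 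Only then does one identify each $\bar H_j'$ with $BT_{\underline{\varepsilon}_{s(j)}}[\pi^+]^{A_j-A_{j-1}}$ via its Dieudonn\'e module, which is straightforward for a single piece with prescribed partial degrees. If you want to salvage your graded-piece approach, you should replace the ``incompatible types'' heuristic with this degree-maximality argument applied to the chosen complements: it is the same computation you already used to pin down $\deg_\sigma H_{A_k}$, just applied once more to $H_j/H_j'$.
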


\begin{proof}
If $x$ is a $\mu$-ordinary point, it then follows from the previous corollary that there exist subgroups $H_k$ of height $fA_k$ with $\deg H_k = d_k$ for all $1 \leq k \leq s$. Conversely, suppose that $(A,\lambda,\iota,\eta)$ is a point of $X_{rig}$, with $A$ defined over the ring of integers of an extension $L$ of $\mathbb{Q}_p$, and that there exist subgroups $H_k$ of height $f A_k$ with $\deg H_k = d_k$ for all $1 \leq k \leq s$. We want to show that $A$ is $\mu$-ordinary ; it suffices to show that $A[\pi^+]$ has a nice description. We have $\deg_\sigma H_k \leq \min(a_\sigma, A_k)$, for all $\sigma \in \Sigma_\pi$, and since $d_k = \sum_{\sigma \in \Sigma_\pi} \min(a_\sigma, A_k)$, this forces the last inequality to be an equality. Define $H_0=0$, $H_{s+1} = A[\pi^+]$, and let $H_j'$ be a complement of $H_{j-1}$ in $H_{j}$ for all $1 \leq j \leq s+1$. This is possible if the field $L$ is large enough. We claim that for $1 \leq j \leq s+1$
$$H_{j} \simeq H_j' \times H_{j-1}$$
Indeed we have a morphism $H_{j-1} \to H_j / H_j'$, which is an isomorphism in generic fiber. The degree of the image of $H_{j-1}$ in $H_j / H_j'$ thus increases ; but since the degree of $H_{j-1}$ is maximal, it must be an equality. We deduce that $\deg H_j = \deg H_j' + \deg H_{j-1}$, and that the morphism $H_j' \times H_{j-1} \to H_j$ is an isomorphism. \\
We finally get
$$A[\pi^+ ] \simeq H_1' \times H_2' \times \dots \times H_{s+1}'$$
But we can explicitly describe the groups $H_j'$. Indeed, for all $1 \leq j \leq s+1$ and $\sigma \in \Sigma_\pi$, we have (setting $A_0=0$ and $A_{s+1}=a+b$)
$$\deg_\sigma H_j' = \deg_\sigma H_j - \deg_\sigma H_{j-1} = \min(a_\sigma, A_j) - \min(a_\sigma, A_{j-1})$$
This quantity is $0$ if $a_\sigma \leq A_{j-1}$ and $A_j - A_{j-1}$ if $a_\sigma \geq A_j$. Since the height of $H_j'$ is $f(A_j - A_{j-1})$, one can see that the special fiber of $H_j'$ is isomorphic to $BT_{\underline{\varepsilon}_{s(j)}} [\pi^+]^{A_{j} - A_{j-1}}$, where $s(j)$ is the number of $\sigma \in D_\pi$ with $a_\sigma \leq A_{j-1}$ (one can see this by looking at the Dieudonné module associated to the special fiber of $H_j'$ for example). We then conclude that $A$ is $\mu$-ordinary by the proposition $\ref{EO_l}$.  
\end{proof}

\subsubsection{Unitary case}

Suppose now we are in case U. Then we have the following proposition.

\begin{prop}
Let $L$ be a finite extension of $\mathbb{Q}_p$, and $x=(A,\lambda,\iota,\eta)$ an $O_L$-point of $X$. Let $1 \leq i \leq f$ be an integer. Then there exists at most one subgroup $H \subset A[\pi]$ stable by $O_F$ of height $2f a_i$ such that
$$\deg H > \sum_{j=1}^{2f} \min(\alpha_j,a_i) - \frac{1}{2}$$ 
Moreover, if such a subgroup exists, it is totally isotropic.
\end{prop}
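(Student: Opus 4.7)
The plan is to follow the pattern of Proposition~\ref{group_A}, adapted to the unitary setting, and then to bootstrap uniqueness into isotropy by exploiting the self-duality of $A[\pi]$ induced by the polarization. Throughout, set $\Delta := \sum_{j=1}^{2f}\min(a_i,\alpha_j)$.

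\textbf{Uniqueness.} Assume $H_1 \ne H_2$ both satisfy the hypothesis and let $2fh$ be the height of $H_1 \cap H_2$, so $h \le a_i-1$. Summing the partial-degree inequalities $\deg_{\sigma_1}H \le \min(h'',a_\sigma)$ and $\deg_{\sigma_2}H \le \min(h'',b_\sigma)$ from the unitary example, one obtains the general bound $\deg H \le \sum_{j=1}^{2f}\min(h'',\alpha_j)$ for any $O_F$-stable subgroup $H$ of $A[\pi]$ of height $2fh''$. Combined with subadditivity,
$$\deg H_1+\deg H_2 \le \deg(H_1 \cap H_2)+\deg(H_1+H_2) \le \sum_{j=1}^{2f}\bigl(\min(h,\alpha_j)+\min(2a_i-h,\alpha_j)\bigr).$$
Each summand is $\le 2\min(a_i,\alpha_j)$ by concavity of $x \mapsto \min(x,\alpha_j)$, and the index $j=i$ (for which $\alpha_i=a_i$) contributes a deficit of $a_i-h \ge 1$. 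This forces $\deg H_1+\deg H_2 \le 2\Delta-1$, contradicting the hypothesis.

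\textbf{Total isotropy.} A short computation using $\alpha_{2f+1-j}=b_j=a+b-a_j$ gives
$$\Delta' \;:=\; \sum_{j=1}^{2f}\min(a+b-a_i,\alpha_j) \;=\; f(a+b)-2fa_i+\Delta.$$
From the duality formulas of the unitary example one has $\deg H^\perp = f(a+b)-2fa_i+\deg H$, so the hypothesis on $H$ translates into $\deg H^\perp > \Delta'-\tfrac12$ (note that $\Delta'$ is precisely the maximal possible degree of an $O_F$-stable subgroup of height $2f(a+b-a_i)$). Assume now that $H$ is not totally isotropic and let $2fh'$ be the height of $H \cap H^\perp$, so $h' \le a_i-1$. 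Set
$$\Psi(x) \;:=\; \sum_{j=1}^{2f}\bigl[\min(x,\alpha_j)+\min(a+b-x,\alpha_j)\bigr],$$
so that $\Psi(a_i)=\Delta+\Delta'$. The general bound applied to $H \cap H^\perp$ and $H+H^\perp$ yields
$$\deg H+\deg H^\perp = \deg(H \cap H^\perp)+\deg(H+H^\perp) \le \Psi(h'),$$
while combining the two lower bounds gives $\deg H+\deg H^\perp > \Psi(a_i)-1$. Hence $\Psi(a_i)-\Psi(h') < 1$. On the other hand, each summand $g_j(x):=\min(x,\alpha_j)+\min(a+b-x,\alpha_j)$ is concave and nondecreasing on $[0,(a+b)/2]$, so every term of $\Psi(a_i)-\Psi(h')$ is nonnegative, and the $j=i$ summand alone equals $2a_i-(h'+a_i)=a_i-h' \ge 1$, a contradiction.

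The main engine in both steps is the same observation: the value $a_i$ always appears in the list $(\alpha_j)_{j=1}^{2f}$ (as $\alpha_i$), precisely at the kink of the relevant concave piecewise-linear function, producing the critical ``deficit $\ge 1$'' in a single coordinate. The routine but essential preliminary is the identity $\Delta'=f(a+b)-2fa_i+\Delta$, which is what ensures that the orthogonal $H^\perp$ also lies in the ``high degree'' regime, so that the uniqueness trick can be applied to the pair $(H,H^\perp)$.
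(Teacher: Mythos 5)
Your proof is correct and takes essentially the same approach as the paper. The paper's proof delegates the uniqueness step to ``same as Proposition~\ref{group_A}'' and the isotropy step to ``apply the proof of Proposition~\ref{incl_A}'' after establishing $\deg H^\bot > \sum_j \min(b_i,\alpha_j)-\tfrac12$; you simply carry out those referenced arguments explicitly, using the same core mechanism (subadditivity of degree, the bound $\deg H'' \le \sum_j \min(h'',\alpha_j)$, and the deficit of at least $1$ arising at the index $j=i$ where $\alpha_i=a_i$), packaged somewhat more cleanly via the concavity of $x\mapsto\min(x,\alpha_j)$ and the auxiliary function $\Psi$. One trivial slip: in the isotropy step you write $\deg H + \deg H^\perp = \deg(H\cap H^\perp) + \deg(H+H^\perp)$ where this should be $\le$ (it is only an inequality, by the corollary to Proposition~\ref{degreeprop}), but this does not affect the argument since you only use the resulting upper bound.
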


\begin{proof}
The proof of the first part of the proposition is exactly the same as in the linear case (see proposition $\ref{group_A}$). To prove that the subgroup is totally isotropic, we will use the same argument as in the proof of the proposition $\ref{incl_A}$. We only need to get a bound for the degree of $H^\bot$. But we have $H^\bot \simeq (A[\pi]/H)^D$, so
$$\deg H^\bot = 2fb_i - \deg (A[\pi] /H) = \deg H + \sum_{j=1}^{2f} (b_i - \alpha_j) > \sum_{j=1}^{2f} (b_i - \alpha_j + \min(\alpha_j, a_i)) - \frac{1}{2}$$
But $b_i - \alpha_j + \min(\alpha_j,a_i) = \min(b_i, a_i + b_i - \alpha_j)$, and since $a_i+b_i$ is constant, we have $a_i+~b_i-~\alpha_j=~\alpha_{2f+1-i}$. In conclusion, we get
$$\deg H^\bot > \sum_{j=1}^{2f} \min(b_i, \alpha_j) - \frac{1}{2}$$
We conclude that $H \subset H^\bot$ by applying directly the proof of the proposition $\ref{incl_A}$ (note that $H^\bot$ is of height $2fb_i$). 
\end{proof}


The proposition shows that there exists at most one subgroup of height $2fa_i$ and of big degree. The next proposition shows that if two such subgroups exists (with different heights), then we automatically have an inclusion.

\begin{prop}
Let $i <j$ be two integers between $1$ and $f$. Let $x=(A,\lambda,\iota,\eta)$ be an $O_L$-point of $X$, and suppose there exists that for $l \in \{i,j\}$ a subgroup $H_l \subset A[\pi]$ stable by $O_F$ of height $2f a_l$ such that
$$\deg H_l > \sum_{k=1}^{2f} \min(\alpha_k,a_l) - \frac{1}{2}$$
Then we have $H_i \subset H_j$.
\end{prop}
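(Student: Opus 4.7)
The proof follows the unitary analog of Proposition~\ref{incl_A}: replace $f$ by $2f$ and the sequence $(a_k)$ by $(\alpha_k)$. Let $2fh$ denote the height of $H_i \cap H_j$, so that $H_i + H_j$ is $O_F$-stable of height $2f(a_i + a_j - h)$. The corollary of Proposition~\ref{degreeprop} gives
$$\deg H_i + \deg H_j \leq \deg(H_i + H_j) + \deg(H_i \cap H_j).$$
For any $O_F$-stable subgroup $G \subset A[\pi]$ of height $2fh'$, the partial degree $\deg_\sigma G$ is bounded both by $\deg_\sigma A[\pi] = \alpha_{k(\sigma)}$ (inclusion) and by $h'$ (via Cartier duality applied to $G^D$). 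Hence
$$\deg(H_i+H_j) \leq \sum_{k=1}^{2f}\min(\alpha_k, a_i+a_j-h), \qquad \deg(H_i \cap H_j) \leq \sum_{k=1}^{2f}\min(\alpha_k, h).$$

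Next I split the first sum at index $j$ (using $\min \leq \alpha_k$ for $k \leq j$ and $\min \leq a_i+a_j-h$ for $k > j$) and the second at index $i-1$. A short rearrangement gives
$$\deg H_i + \deg H_j \leq 2\sum_{k=1}^{i-1}\alpha_k + \sum_{k=i}^{j}\alpha_k + (2f-j)(a_i+a_j) + (j-i+1)h.$$

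Finally, assume for contradiction that $H_i \not\subset H_j$; then $h \leq a_i - 1$. Since $i, j \leq f$ we have $\alpha_i = a_i$ and $\alpha_j = a_j$, and the monotonicity $\alpha_1 \leq \cdots \leq \alpha_{2f}$ yields the identity $\sum_{k=1}^{2f}\min(\alpha_k, a_l) = \sum_{k=1}^{l}\alpha_k + (2f-l)a_l$ for $l \in \{i,j\}$. Substituting $h = a_i - 1$ and simplifying, the right-hand side of the previous display becomes $\sum_{k=1}^{2f}(\min(\alpha_k,a_i) + \min(\alpha_k,a_j)) - (j-i+1)$, which contradicts the degree hypothesis since $j-i+1 \geq 2$. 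The only real work is this final rearrangement, which is routine bookkeeping identical to the linear case; in particular, the total isotropy of $H_i, H_j$ from the preceding proposition is not used.
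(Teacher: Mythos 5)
Your proof is correct and matches the paper's intent exactly: the paper's own proof of this proposition is the single line ``the proof is the same as in the linear case,'' and you have carried out that transposition carefully, replacing $f$ by $2f$ and the sequence $(a_k)$ by $(\alpha_k)$. Your remark that total isotropy of $H_i$ and $H_j$ plays no role here is accurate and worth noting, since the linear-case argument indeed only uses $O_F$-stability, the height hypotheses, and the degree bounds.
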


\begin{proof}
The proof is the same as in the linear case (see the proposition $\ref{incl_A}$).
\end{proof}

As a consequence, we directly get the existence of canonical subgroups for $\mu$-ordinary abelian scheme. Let $s$ be the cardinal of $\{a_1, \dots, a_f \} \cap [1,(a+b)/2]$, and denote by $A_1 < \dots < A_s$ the different elements of this set.

\begin{coro}
Let $L$ be a finite extension of $\mathbb{Q}_p$, and let $x=(A,\lambda,\iota,\eta)$ be an $O_L$-point of $X$. Assume that $x$ is $\mu$-ordinary (i.e. the special fiber of $A$ is $\mu$-ordinary). Then for any integer $1 \leq k \leq s$, there exists a unique totally isotropic subgroup $H_k \subset A[\pi]$ of height $2fA_k$, with
$$\deg_\sigma H_k = \min(\alpha_\sigma,A_k)$$
for all $\sigma \in \Sigma_\pi$.
\end{coro}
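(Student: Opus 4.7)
The plan is to mimic the proof of Corollary \ref{coro_A} in the linear case, replacing the Moonen filtration there with its unitary analog and otherwise adjusting indices from $f$ to $2f$. Since every claim only involves the generic and special fibers of $A$, I would first extend scalars and work over $O_{\overline{\mathbb{Q}}_p}$. The $\mu$-ordinarity of $A$ at $\pi$ together with \cite{Mo} Proposition $2.1.9$ then produces an $O_{F,\pi}$-stable filtration by $p$-divisible subgroups
$$0 = X_0 \subset X_1 \subset \dots \subset X_{2f+1} = A[\pi^\infty]$$
whose successive quotients $Y_i := X_{i+1}/X_i$ have special fiber isomorphic to $BT_{\underline{\varepsilon}_i}^{\alpha_{i+1}-\alpha_i}$ for $0 \leq i \leq 2f$.

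Next I would compute the partial degrees of each $Y_i[\pi]$. From the Dieudonn\'e module $M_{\underline{\varepsilon}_i}$, where $O_{F,\pi}$ acts on $W(k) \cdot e_{\sigma_j}$ by the chosen embedding $\sigma_j$, the dimension of $BT_{\underline{\varepsilon}_i}$ at $\sigma_j$ is exactly $\varepsilon_{i,j}$. Hence $\omega_{Y_i, \sigma_j}$ is free of rank $\alpha_{i+1}-\alpha_i$ when $j \geq i+1$ and is zero when $j \leq i$, so that
$$\deg_{\sigma_j} Y_i[\pi] = \begin{cases} 0 & \text{if } j \leq i, \\ \alpha_{i+1}-\alpha_i & \text{if } j \geq i+1. \end{cases}$$

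For each $k \in \{1,\dots,s\}$, pick an index $i(k) \in \{1,\dots,f\}$ with $\alpha_{i(k)} = A_k$ and set $H_k := X_{i(k)}[\pi]$. This is an $O_F$-stable finite flat subgroup of $A[\pi]$ of height $2f A_k$, and additivity of the partial degree along the filtration gives
$$\deg_{\sigma_j} H_k \;=\; \sum_{l=0}^{i(k)-1} \deg_{\sigma_j} Y_l[\pi] \;=\; \alpha_{\min(i(k),j)} \;=\; \min(\alpha_j, A_k),$$
which is the formula stated in the corollary. Summing over $j$, the total degree of $H_k$ equals $\sum_{j=1}^{2f}\min(\alpha_j, A_k)$, hence strictly exceeds $\sum_j \min(\alpha_j, A_k)-\tfrac12$, so the previous proposition applies and yields both the uniqueness of $H_k$ and the fact that it is automatically totally isotropic.

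I do not anticipate any serious obstacle. The one step that has to be done carefully is matching the ordering $\sigma_1,\dots,\sigma_{2f}$ of the embeddings of $F$ above $\pi$ used to label the basis of $M_{\underline{\varepsilon}_i}$ with the ordering used to define the $\alpha_j$; once that is aligned, every assertion reduces to the partial-degree bookkeeping above, and the symplectic constraint is handled entirely through the previous proposition rather than by any direct pairing computation.
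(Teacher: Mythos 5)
Your proof is correct and takes essentially the same route the paper intends: the paper's own proof of the unitary corollary is simply the remark ``similar to the linear case,'' and your argument carries out precisely the adaptation (Moonen filtration with $2f+1$ steps, partial-degree computation via $\varepsilon_{i,j}$, then invoking the unitary uniqueness/isotropy proposition). The bookkeeping $\deg_{\sigma_j}H_k=\alpha_{\min(i(k),j)}=\min(\alpha_j,A_k)$ is valid because $(\alpha_j)$ is non-decreasing, and routing the isotropy claim through the uniqueness proposition rather than a direct pairing computation is exactly what the paper's proposition is set up to provide.
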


\begin{proof}
The proof is similar to the linear case (see $\ref{coro_A}$).
\end{proof}

\begin{rema}
We also have the following description of the canonical subgroups in the special fiber. Let $L$ be a finite extension of $\mathbb{Q}_p$, and let $x=(A,\lambda,\iota,\eta)$ be a $\mu$-ordinary point of $X$ defined over $O_L$. Let $A_s$ denote the special fiber of $A$ ; then the Frobenius acts on $A_s[\pi^\infty]$, and we can form the subgroups 
$$C_i := \pi^{2f-i} A_s[F^{2f},\pi^{2f-i+1}]$$
for $1 \leq i \leq 2f$, where $F$ is the Frobenius. Then we have $0 \subset C_1 \subset \dots \subset C_{2f} \subset A_s[\pi]$, and the special fiber of the canonical subgroups are equal to one of the $C_i$. More precisely, we have for $1 \leq k \leq s$, $H_k \times k(L) = C_{r(k)}$, with $k(L)$ the residue field of $L$ and $r(k) = \min (l, a_l=A_k)$. Note also that $C_i^\bot = C_{2f+1-i}$.
\end{rema}

We can then define the relevant degree functions on $X_{Iw,\pi}$. For each integer $k$, define 
$$d_k = \sum_{j=1}^{2f} \min(\alpha_j,A_k)$$
Let $X_{Iw,\pi,rig}$ be the associated rigid space. We define the degree function $Deg : X_{Iw,\pi,rig} \to~\prod_{k=1}^s [0,d_k]$ on this space by

$$Deg (A,\lambda,\iota,\eta,H_\bullet) := (\deg H_{A_k})_{1 \leq k \leq s}$$

We also define the $k$-th degree function by $Deg_k (A,\lambda,\iota,\eta,H_\bullet) := \deg H_{A_k}$ for $1 \leq k \leq s$. We then have the following description of the $\mu$-ordinary locus.

\begin{prop} \label{ordgroup_u}
The space $X_{rig}^{\mu-ord} \subset X_{rig}$ is exactly the image of $Deg^{-1} (\{d_1\} \times \dots \times \{d_s\})$ by the map $X_{Iw,\pi,rig} \to X_{rig}$.
\end{prop}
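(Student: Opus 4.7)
The plan is to mirror the proof of Proposition \ref{ordgroup_l}, with additional bookkeeping for the self-duality condition imposed by the Iwahori structure in the unitary case.

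For the direct implication, assume $x=(A,\lambda,\iota,\eta)$ is $\mu$-ordinary. The previous corollary supplies totally isotropic $O_F$-stable subgroups $H_{A_1}\subset\cdots\subset H_{A_s}\subset A[\pi]$ with $\deg_\sigma H_{A_k}=\min(\alpha_\sigma,A_k)$, hence $\deg H_{A_k}=d_k$. After a finite extension of $L$, the explicit $\mu$-ordinary decomposition of $A[\pi]$ given by Proposition \ref{EO_u} lets us refine the chain $0\subset H_{A_1}\subset\cdots\subset H_{A_s}\subset H_{A_s}^\bot\subset\cdots\subset H_{A_1}^\bot\subset A[\pi]$ to a full self-dual $O_F$-flag, producing a point of $X_{Iw,\pi,rig}$ that maps to $x$ and lies in $Deg^{-1}(\{d_1\}\times\cdots\times\{d_s\})$.

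For the converse, let $(A,\lambda,\iota,\eta,H_\bullet)\in X_{Iw,\pi,rig}$ satisfy $\deg H_{A_k}=d_k$ for every $k$. Since $\deg H_{A_k}=\sum_\sigma\deg_\sigma H_{A_k}$ with $\deg_\sigma H_{A_k}\leq\min(\alpha_\sigma,A_k)$ termwise, equality at the top forces $\deg_\sigma H_{A_k}=\min(\alpha_\sigma,A_k)$ for every embedding. The preceding proposition then guarantees that $H_{A_k}$ is totally isotropic, and the dual partial-degree formula yields $\deg_\sigma H_{A_k}^\bot=\min(\alpha_\sigma,a+b-A_k)$, again maximal for its height. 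After enlarging $L$, pick complements $H_j'$ step by step along the extended self-dual chain $0\subset H_{A_1}\subset\cdots\subset H_{A_s}\subset H_{A_s}^\bot\subset\cdots\subset H_{A_1}^\bot\subset A[\pi]$. As in the linear proof, the natural map $H_{\mathrm{prev}}\to H_{\mathrm{next}}/H_j'$ is a generic isomorphism of finite flat group schemes whose source already has the maximal total degree, hence an actual isomorphism; iterating yields $A[\pi]\simeq\prod_j H_j'$. Computing partial degrees of each factor identifies its special fiber with the appropriate $BT_{\underline{\varepsilon}_i}[\pi]^{\alpha_{i+1}-\alpha_i}$, and Proposition \ref{EO_u} concludes that $A$ is $\mu$-ordinary at $\pi$.

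The main subtlety is organizational rather than conceptual: one must check that once the orthogonals $H_{A_k}^\bot$ are inserted, the successive heights and partial degrees of the extended chain match exactly the factors of the $\mu$-ordinary slope filtration indexed by $\underline{\varepsilon}_0,\ldots,\underline{\varepsilon}_{2f}$. This amounts to the identity $\alpha_{2f+1-i}=a+b-\alpha_i$, the combinatorial avatar of the relation $a_\sigma+b_\sigma=a+b$ underlying the self-duality.
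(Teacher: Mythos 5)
Your proof follows the same route as the paper's own argument: for the direct implication the corollary supplies subgroups $H_{A_k}$ of maximal degree; for the converse, the term-by-term identity $\deg_\sigma H_{A_k}=\min(\alpha_\sigma,A_k)$ is extracted from $\deg H_{A_k}=d_k$, the chain is extended with the orthogonals $H_{A_k}^\bot$, complements are chosen iteratively, the maximality of each $\deg H_{j-1}$ forces the splitting $H_j\simeq H_j'\times H_{j-1}$, and then the partial degrees of the $H_j'$ pin down the special fiber so that Proposition~\ref{EO_u} applies. You are somewhat more explicit than the paper on the direct implication (refining the chain $0\subset H_{A_1}\subset\cdots\subset H_{A_s}\subset H_{A_s}^\bot\subset\cdots\subset H_{A_1}^\bot\subset A[\pi]$ to a full self-dual flag so as to actually land in $X_{Iw,\pi,rig}$, a step the paper leaves implicit) and on the observation $\alpha_{2f+1-i}=a+b-\alpha_i$ that makes the orthogonals' degrees come out maximal for their heights. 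The only imprecision is attributing the over-$O_L$ decomposition to Proposition~\ref{EO_u} (which is a statement about the special fiber); the relevant lift is the Moonen filtration used in the proof of the corollary, though this does not affect the correctness of the argument.
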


\begin{proof}
If $x$ is a $\mu$-ordinary point, it then follows from the previous corollary that there exist subgroups $H_k$ of height $2f A_k$ with $\deg H_k = d_k$ for all $1 \leq k \leq s$. Conversely, suppose that $(A,\lambda,\iota,\eta)$ is a point of $X_{rig}$, with $A$ defined over the ring of integers of an extension $L$ of $\mathbb{Q}_p$, and that there exist subgroups $H_k$ of height $2f A_k$ with $\deg H_k = d_k$ for all $1 \leq k \leq s$. This implies that we have $\deg_\sigma H_k = \min(\alpha_\sigma , A_k)$ for all $\sigma \in D_\pi$ and $1 \leq k \leq s$. We want to show that $A$ is $\mu$-ordinary ; it suffices to show that $A[\pi]$ has a nice description. Define $H_0=0$, $H_{i} = H_{2s+1-i}^\bot$ for $s+1 \leq i \leq 2s+1$, and let $H_j'$ be a complement of $H_{j-1}$ in $H_{j}$ for all $1 \leq j \leq 2s+1$. This is possible if the field $L$ is big enough. We claim that 
$$H_{j} \simeq H_j' \times H_{j-1}$$
Indeed we have a morphism $H_{j-1} \to H_j / H_j'$, which is an isomorphism in generic fiber. The degree of the image of $H_{j-1}$ in $H_j / H_j'$ thus increases ; but since the degree of $H_{j-1}$ is maximal, it must be an equality. We deduce that $\deg H_j = \deg H_j' + \deg H_{j-1}$, and that the morphism $H_j' \times H_{j-1} \to H_j$ is an isomorphism. \\
We finally get
$$A[\pi ] \simeq H_1' \times H_2' \times \dots \times H_{2s+1}'$$
But we can explicitly describe the groups $H_j'$ using the same proof as in the proposition $\ref{ordgroup_l}$. We then conclude that $A$ is $\mu$-ordinary by the proposition $\ref{EO_u}$.  
\end{proof}

\section{Modular forms and Hecke operators}

\subsection{Modular forms}

Let us now define the modular forms for the Shimura variety $X$. Let $\pi$ be a place of $F_0$ above $p$, and suppose it is in case L. Then we define the $O_F \otimes O_K$-module $St_\pi$ by
$$St_\pi := O_K^{a_\sigma} \oplus O_K^{b_\sigma}$$
where $O_F$ acts on $O_K^{a_\sigma}$ by $\sigma^+$ and on $O_K^{b_\sigma}$ by $\sigma^-$. If $\pi$ is in case $U$, we define the $O_F \otimes O_K$-module $St_\pi$ by
$$St_\pi := O_K^{a_\sigma} \oplus O_K^{b_\sigma}$$
where $O_F$ acts on $O_K^{a_\sigma}$ by $\sigma_1$ and on $O_K^{b_\sigma}$ by $\sigma_2$. Finally, we define the $O_F \otimes O_K$-module $St$ by
$$St = \bigoplus_{\pi} St_\pi$$
where $\pi$ runs over the places of $F_0$ above $p$. If $R$ is an $O_K$-algebra, and if $(A,\lambda,\iota,\eta)$ is a $R$-point of $X$, then the $R \otimes O_F$-module $e^* \Omega^1_{A/R}$ is isomorphic to $St \otimes_{O_K} R$. The sheaf $\omega_A := e^* \Omega^{1}_{A/X}$ is then locally isomorphic to $St \otimes_{O_K} \mathcal{O}_X$ (it is a locally free sheaf on $\mathcal{O}_X$). \\
Define 
$$\mathcal{T} = \text{Isom}_{O_F \otimes \mathcal{O}_X} (St \otimes \mathcal{O}_X, \omega_A)$$
It is a torsor on $X$ under the group defined over $O_K$
$$M=\prod_{\pi \in \mathcal{P}} \prod_{\sigma \in \Sigma_\pi} GL_{a_\sigma} \times GL_{b_\sigma}             $$
where $\mathcal{P}$ is the set of primes of $F_0$ above $\pi$. Let $B_M$ be the upper Borel of $M$, $U_M$ its unipotent radical, and $T_M$ its maximal torus. Let $X(T_M)$ be the character group of $T_M$, and $X(T_M)^+$ the cone of dominant weights for $B_M$. If $\kappa \in X(T_M)^+$, we note $\kappa'=- w_0 \kappa \in X(T_M)^+$, where $w_0$ is the element of highest length in the Weyl group of $M$ relatively to $T_M$. \\
Let $\phi : \mathcal{T} \to X$ be the projection morphism.

\begin{defi}
Let $\kappa \in X(T_M)^+$. The sheaf of modular forms of weight $\kappa$ is $\omega^\kappa =~\phi_* O_\mathcal{T}[\kappa']$, where $\phi_* O_\mathcal{T}[\kappa']$ is the subsheaf of $\phi_* O_\mathcal{T}$ where $B_M=T_M U_M$ acts by $\kappa'$ on $T_M$ and trivially on $U_M$.
\end{defi} 

A modular form of weight $\kappa$ on $X$ with coefficients in an $O_L$-algebra $R$ is thus a global section of $\omega^\kappa$, so an element of $H^0(X \times_{O_K} R , \omega^\kappa)$. Using the projection $X_{Iw} \to X$, we define similarly the sheaf $\omega^\kappa$ on $X_{Iw}$, as well as the modular forms on $X_{Iw}$. 

\subsection{Overconvergent modular forms}

For simplicity, we will now assume that there is only one place $\pi$ of $F_0$ above $p$, that is to say that $p$ is inert in $F_0$. The case with several places does not add any difficulty, and will be treated in section \ref{several}. \\ 
We can then define the space of overconvergent modular forms. These will be sections of the sheaf of modular forms defined over a strict neighborhood of the $\mu$-ordinary locus. Recall that we have defined in both cases a degree function.

$$Deg : X_{Iw,rig} \to \prod_{k=1}^s [0,d_k]$$

Since there is only one place above $p$ in $F_0$, we have $X_{Iw}=X_{Iw,\pi}$. \\
We define the $\mu$-ordinary-multiplicative locus as $Deg^{-1} (\{d_1\} \times \dots \times \{d_s\})$. By the proposition $\ref{ordgroup_l}$ or $\ref{ordgroup_u}$, this locus lies in the $\mu$-ordinary locus.

\begin{defi}
The space of overconvergent modular forms of weight $\kappa$ is defined as 
$$M^\dagger := colim_\mathcal{V} H^0(\mathcal{V}, \omega^\kappa)$$
where $\mathcal{V}$ runs over the strict neighborhoods of the $\mu$-ordinary-multiplicative locus in $X_{Iw,rig}$.
\end{defi}

An overconvergent modular form is then defined over a space of the form
$$Deg^{-1} ([d_1-\varepsilon,d_1] \times \dots \times [d_s-\varepsilon,d_s])$$
for some $\varepsilon >0$.

\subsection{Hecke operators}

We now define the Hecke operators. These operators will both act on the rigid space, and on the space of modular forms. We will fix the weight $\kappa$. Explicitly, $\kappa$ is a collection of integer
$$\left ((\kappa_{\sigma,1} \geq \dots \geq \kappa_{\sigma,a_\sigma}),(\lambda_{\sigma,1} \geq \dots \geq \lambda_{\sigma,b_\sigma}) 
\right)_{\sigma \in \Sigma_\pi} $$
We recall that we still assume that $\pi$ is the only place of $F_0$ above $p$. To simplify the notation, we define $\kappa_\sigma := \kappa_{\sigma,a_\sigma}$ and $\lambda_\sigma := \lambda_{\sigma,b_\sigma}$.

\subsubsection{Linear case}

Assume that $\pi$ is in case $L$. Let $1 \leq i \leq a+b-1$ be an integer, and define $C_i$ be the moduli space defined over $K$ parameterizing $(A,\lambda,\iota,\eta,H_\bullet,L)$, with $(A,\lambda,\iota,\eta,H_\bullet)$ a point of $X_{Iw}$ and $L=L_0 \oplus L_0 ^\bot$ is a subgroup of $A[\pi]$, where $L_0$ is an $O_F$-stable subgroup of $A[\pi^+]$ with $A[\pi^+] = H_i \oplus L_0$. We have two morphisms $p_1, p_2 : C_i \to X_{Iw} \times_{O_K} K$. The morphism $p_1$ corresponds to forgetting $L$, and the morphism $p_2$ is defined as $p_2(A,\lambda,\iota,\eta,H_\bullet,L) = (A/L,\lambda',\iota',\eta',H_\bullet')$, with
\begin{itemize}
\item $H_j' = (H_j+L_0)/L_0$ if $j \leq i$.
\item $H_j' = ((\pi^+)^{-1} (H_j \cap L_0)) /L_0$ if $j>i$.
\end{itemize}
We take the polarization $\lambda'$ to be equal to $p \cdot \lambda$, which is a prime to $p$ polarization. Let $C_i^{an}$ be the analytic space associated to $C_i$, and define $C_{i,rig} := p_1^{-1} (X_{Iw,rig})$. The morphisms $p_1,p_2$ give morphisms $C_{i,rig} \to X_{Iw,rig}$.

\begin{defi}
The $i$-th Hecke operator acting on the subsets of $X_{Iw,rig}$ is defined by 
$$U_{\pi,i} (S) = p_2(p_1^{-1} (S))$$
This operator preserves the admissible open subsets, and quasi-compact admissible open subsets.
\end{defi}

Let us denote by $p : A \to A/L$ the universal isogeny over $C_i$. This induces an isomorphism $p^* : \omega_{(A/L)/X} \to \omega_{A/X}$, and thus a morphism 
$p^* (\kappa) :~p_2^* \omega^\kappa \to~p_1^* \omega^\kappa$. For every admissible open $\mathcal{U}$ of $X_{Iw,rig}$, we form the composed morphism
\begin{displaymath}
\widetilde{U}_{\pi,i} :   H^0(U_{\pi,i}(\mathcal{U}),\omega^\kappa) \to H^0 ( p_1^{-1} (\mathcal{U}), p_2^* \omega^\kappa) \overset{p^*(\kappa)}{\to} H^0(p_1^{-1}(\mathcal{U}) , p_1^* \omega^\kappa) \overset{Tr_{p_1}}{\to}  H^0(\mathcal{U},\omega^\kappa)
\end{displaymath}

\begin{defi}
The Hecke operator acting on modular forms is defined by $U_{\pi,i}~=~\frac{1}{p^{N_i}} \widetilde{U}_{\pi,i}$ with 
$$N_i= \sum_{\sigma \in \Sigma_\pi} \Big( \min(i,a_\sigma)\min(a+b-i,b_\sigma) + \max(a_\sigma-i,0) \kappa_\sigma + \max(i-a_\sigma,0) \lambda_\sigma \Big) $$
\end{defi}

We will also write 
$$n_i = \sum_{\sigma \in \Sigma_\pi} \min(i,a_\sigma)\min(a+b-i,b_\sigma)$$
the constant term of $N_i$, which is independent of the weight. \\
Let us explain briefly the meaning of the normalization factor $N_i$. Then term $\min(i,a_\sigma)\min(a+~b-~i,b_\sigma)$ comes from the inseparability degree of the projection $p_1$. The term $\max(a_\sigma-i,0) \kappa_\sigma + \max(i-~a_\sigma,0) \lambda_\sigma$ comes from the morphism $p^* (\kappa)$. Indeed, we have the following proposition. 

\begin{prop} \label{minorate}
Let $M$ be a finite extension of $\mathbb{Q}_p$, let $(A,\lambda,\iota,\eta,H_\bullet)$ be an $O_M$-point of $X_{Iw}$ and $L=L_0 \oplus L_0 ^\bot$  a subgroup of $A[\pi]$, where $L_0$ is an $O_F$-stable subgroup of $A[\pi^+]$ with $A[\pi^+] = H_i \oplus L_0$ in generic fiber. Then we have for all $\sigma \in \Sigma_\pi$
\begin{displaymath}
\begin{array}{ccc}
\deg_\sigma L_0 \geq a_\sigma - i  & \text{ and } &  \deg_\sigma L_0^\bot \geq i - a_\sigma 
\end{array}     
\end{displaymath}
\end{prop}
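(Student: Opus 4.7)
The plan is to get both inequalities as immediate consequences of the additivity of partial degrees in short exact sequences, combined with the elementary fact that an $O_F$-stable finite flat subgroup of height $fh$ has partial degree at each $\sigma \in \Sigma_\pi$ bounded above by $h$. There is no need to invoke anything about how $L_0$ sits relative to $H_i$ beyond its height.

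First I would observe that because $A[\pi^+] = H_i \oplus L_0$ holds in generic fiber with $H_i$ of height $fi$ and $A[\pi^+]$ of height $f(a+b)$, the subgroup $L_0$ necessarily has height $f(a+b-i)$, and the quotient $A[\pi^+]/L_0$ has height $fi$ with an induced $O_F$-action. Applying the additivity of $\deg_\sigma$ to the short exact sequence $0 \to L_0 \to A[\pi^+] \to A[\pi^+]/L_0 \to 0$, together with $\deg_\sigma A[\pi^+] = a_\sigma$ (recorded in the earlier example), and using the elementary bound $\deg_\sigma(A[\pi^+]/L_0) \leq i$ coming from its height, yields the first inequality $\deg_\sigma L_0 \geq a_\sigma - i$.

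For the second inequality I would run the same argument symmetrically on the other side. The orthogonal $L_0^\bot$ is a subgroup of $A[\pi^-]$ of height $fi$, so the quotient $A[\pi^-]/L_0^\bot$ has height $f(a+b-i)$, still with an $O_F$-action. Applying additivity of $\deg_\sigma$ to the exact sequence $0 \to L_0^\bot \to A[\pi^-] \to A[\pi^-]/L_0^\bot \to 0$, and using $\deg_\sigma A[\pi^-] = b_\sigma$ together with the bound $\deg_\sigma(A[\pi^-]/L_0^\bot) \leq a+b-i$, gives $\deg_\sigma L_0^\bot \geq b_\sigma - (a+b-i) = i - a_\sigma$.

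The only place where one could misstep is by trying to derive the second inequality from the first via the duality formula $\deg_\sigma L_0^\bot = b_\sigma - (a+b-i) + \deg_\sigma L_0$ recorded in the earlier example: substituting the first inequality there produces only the trivial bound $\deg_\sigma L_0^\bot \geq 0$, because the signatures satisfy $a_\sigma + b_\sigma = a+b$. So the main obstacle, such as it is, is recognizing that both inequalities must be established by the same symmetric exact-sequence argument rather than one deduced from the other by duality.
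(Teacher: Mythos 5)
Your proof is correct, and your derivation of the first inequality is precisely the paper's: additivity in $0 \to L_0 \to A[\pi^+] \to A[\pi^+]/L_0 \to 0$, $\deg_\sigma A[\pi^+] = a_\sigma$, and the bound $\deg_\sigma(A[\pi^+]/L_0) \leq i$ from the height. For the second inequality you run the mirror argument inside $A[\pi^-]$, which also works, whereas the paper simply invokes duality.

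Your closing paragraph, however, misdiagnoses the situation. You claim that deriving the second inequality from the duality identity $\deg_\sigma L_0^\bot = b_\sigma - (a+b-i) + \deg_\sigma L_0 = (i - a_\sigma) + \deg_\sigma L_0$ fails because feeding in the first inequality only yields the trivial $\deg_\sigma L_0^\bot \geq 0$. But the point of the duality route is not to substitute the first inequality: one substitutes the \emph{trivial} nonnegativity $\deg_\sigma L_0 \geq 0$, which immediately gives $\deg_\sigma L_0^\bot \geq i - a_\sigma$. That is exactly what the paper does (``we get the other [inequality] by duality, note that $b_\sigma - (a+b-i) = i - a_\sigma$''). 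Symmetrically, plugging $\deg_\sigma L_0^\bot \geq 0$ into the same identity recovers the first inequality, so in fact either one of the two statements, plus the duality identity, gives the other for free --- your exact-sequence argument for the first inequality is really the same computation in disguise (dualizing $A[\pi^+]/L_0$ is $L_0^\bot$, and the height bound on $\deg_\sigma(A[\pi^+]/L_0)$ is equivalent to $\deg_\sigma L_0^\bot \geq 0$). So your worry about a potential misstep is unfounded; both your symmetric argument and the paper's one-line duality argument are valid and essentially equivalent.
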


\begin{proof}
The group $A[\pi^+]/L_0$ is of height $fi$, and has partial degree $(a_\sigma - \deg_\sigma L_0)_\sigma$. Hence $a_\sigma-~\deg_\sigma L_0 \leq~i$, and $\deg_\sigma L_0 \geq a_\sigma - i$. We get the other equality by duality (note that $b_\sigma - (a+b-i) = i - a_\sigma$).
\end{proof}

We have the following proposition concerning the behavior of the Hecke operator regarding the degree function.

\begin{prop}
Let $x=(A,\lambda,\iota,\eta,H_\bullet)$ be a point of $X_{Iw,rig}$, and $y \in U_{\pi,i}(x)$ corresponding to a subgroup $L \in A[\pi]$ as before. Write $y=(A/L,\lambda,\iota,\eta,H_\bullet')$ ; then we have
$$\deg H_j' \geq \deg H_j$$
for all $1 \leq j \leq a+b-1$. Moreover, we have
$$\deg H_i' = \deg A[\pi^+] - \deg L_0   $$
If $\deg H_i' = \deg H_i$, then $\deg H_i \in \mathbb{Z}$.
\end{prop}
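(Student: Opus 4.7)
The plan is to prove all three assertions by careful degree bookkeeping with the sum-intersection inequality $\deg H+\deg H'\le\deg(H+H')+\deg(H\cap H')$ (the corollary following Proposition~\ref{degreeprop}), combined with additivity of the degree in short exact sequences and the explicit formula for $H_j'$.

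I would first handle (a). For $j\le i$, the inclusion $H_j\subset H_i$ together with the generic-fiber identity $H_i\cap L_0=0$ forces the flat closure of $H_j\cap L_0$ to be trivial, so $\deg(H_j\cap L_0)=0$. Combining this with additivity applied to $0\to L_0\to H_j+L_0\to H_j'\to 0$, the sum-intersection inequality applied to $H_j$ and $L_0$ reduces at once to $\deg H_j\le\deg H_j'$. For $j>i$, I would compute $\deg H_j'$ in closed form using the short exact sequence
$$0\to A[\pi^+]\to(\pi^+)^{-1}(H_j\cap L_0)\to H_j\cap L_0\to 0$$
(second arrow multiplication by $\pi^+$); combined with additivity through the quotient by $L_0$, this yields $\deg H_j'=\deg A[\pi^+]+\deg(H_j\cap L_0)-\deg L_0$. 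Since $H_j\supset H_i$ forces $H_j+L_0=A[\pi^+]$ already as flat subgroups (same generic fiber, same height), the sum-intersection inequality for $H_j$ and $L_0$ reads $\deg H_j+\deg L_0\le\deg A[\pi^+]+\deg(H_j\cap L_0)$, which is exactly $\deg H_j\le\deg H_j'$. Specializing to $j=i$ gives (b): $H_i\cap L_0=0$ and $H_i+L_0=A[\pi^+]$ force $H_i'=A[\pi^+]/L_0$, whence $\deg H_i'=\deg A[\pi^+]-\deg L_0$ by additivity.

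For (c), the hypothesis $\deg H_i'=\deg H_i$ combined with (b) forces equality in the sum-intersection inequality for $H_i$ and $L_0$. Tracing through the proof of the corollary, equality upgrades the generic-fiber isomorphism $H_i\times L_0\to H_i+L_0=A[\pi^+]$ to an isomorphism of finite flat group schemes. Taking cotangent spaces yields $\omega_{A[\pi^+]}\simeq\omega_{H_i}\oplus\omega_{L_0}$. By the determinant condition $\omega_\pi^+=\bigoplus_{\sigma\in\Sigma_\pi}R^{a_\sigma}$ is free over $O_L$, and $\omega_{A[\pi^+]}=\omega_\pi^+/p$ is therefore free over $O_L/p$; as a direct summand of a free module over the local Artinian ring $O_L/p$, the module $\omega_{H_i}$ is free of some rank $r$, giving $\deg H_i=v(p^r)=r\in\mathbb{Z}$. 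The main obstacle is the $j>i$ case: the natural map $H_j\to H_j'$ has non-trivial generic-fiber kernel, so the third bullet of Proposition~\ref{degreeprop} cannot be applied directly as in the $j\le i$ case, and one must instead derive the inequality by routing it through the identity $\deg H_j'=\deg A[\pi^+]+\deg(H_j\cap L_0)-\deg L_0$ extracted from the preimage description.
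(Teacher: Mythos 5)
Your proof is correct, and the core of it -- the case split into $j\le i$ and $j>i$, the exact sequence $0\to A[\pi^+]\to(\pi^+)^{-1}(H_j\cap L_0)\to H_j\cap L_0\to0$, the identity $\deg H_j'=\deg A[\pi^+]+\deg(H_j\cap L_0)-\deg L_0$, and the sum--intersection inequality $\deg H_j+\deg L_0\le\deg(H_j+L_0)+\deg(H_j\cap L_0)$ -- is exactly the paper's argument. Two small remarks on where you diverge. For $j\le i$ the paper is more direct: since $H_j\to H_j'=(H_j+L_0)/L_0$ is an isomorphism in generic fiber (as $H_j\cap L_0=0$ generically), the third bullet of Proposition~\ref{degreeprop} immediately gives $\deg H_j'\ge\deg H_j$; your detour through additivity and the sum--intersection inequality arrives at the same place but is slightly longer. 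For the integrality of $\deg H_i$ at the end, your route is genuinely different from the paper's one-liner: the paper observes that $A[\pi^+]$ is a $BT_1$, hence its direct factor $H_i$ is a $BT_1$, and a $BT_1$ over $O_L$ has integer degree; you instead unwind this by hand, noting that the determinant condition makes $\omega_\pi^+$ a free $O_L$-module, so $\omega_{A[\pi^+]}=\omega_\pi^+/p$ is free over the local Artinian ring $O_L/p$, and the direct summand $\omega_{H_i}$ is therefore free over $O_L/p$ of some rank $r$, giving $\deg H_i=r\in\mathbb{Z}$. Your version has the merit of being self-contained and avoiding any appeal to structure theory of truncated Barsotti--Tate groups (freeness of $\omega$ for a $BT_1$, closure of $BT_1$'s under direct factors); the paper's version is shorter if one takes those facts as known. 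Both are fully rigorous; the only slightly loose phrase in yours is ``the flat closure of $H_j\cap L_0$ is trivial'' -- the intended meaning (the schematic closure of the trivial generic fiber is the trivial group, so $\deg(H_j\cap L_0)=0$ in the corollary) is clear, but worth stating precisely.
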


\begin{proof}
For all $1 \leq j \leq i$, the morphism $H_j \to H_j'$ is an isomorphism in the generic fiber. Thus the inequality $\deg H_j' \geq \deg H_j$. If $i < j \leq a+b-1$, we have
$$\deg H_j' = \deg ((\pi^+)^{-1} (H_j \cap L_0)) - \deg L_0$$
Since $\deg ((\pi^+)^{-1} H) = \deg A[\pi^+] + \deg H$ for every subgroup $H$ of $A[\pi^+]$, we get
$$\deg H_j' = \deg A[\pi^+] + \deg (H_j \cap L_0) - \deg L_0 = \deg (H_j+L_0) + \deg (H_j \cap L_0) - \deg L_0 \geq \deg H_j$$
from the properties of the degree function. \\
We have $H_i' = (A[\pi^+] / L_0)$, hence the formula for the degree of $H_i'$. If $\deg H_i' = \deg H_i$, then we have $\deg H_i + \deg L_0 = \deg A[\pi^+]$, and $A[\pi^+] = H_i \times L_0$. Since $A[\pi^+]$ is a $BT_1$, so is $H_i$, and its degree is an integer.
\end{proof}

\subsubsection{Unitary case}

Assume now that $\pi$ is in case $U$. Let $1 \leq i \leq (a+b)/2$ be an integer, and define $C_i$ be the moduli space defined over $K$ parameterizing $(A,\lambda,\iota,\eta,H_\bullet,L)$, with $(A,\lambda,\iota,\eta,H_\bullet)$ a point of $X_{Iw}$ and 
\begin{itemize}
\item $L$ is an $O_F$-stable, totally isotropic subgroup of $A[\pi^2]$ such that $A[\pi] = H_i \oplus L[\pi] = H_i^\bot \oplus \pi L$ if $i < (a+b)/2$.
\item $L$ is an $O_F$-stable, totally isotropic subgroup of $A[\pi]$ such that $A[\pi] = H_i \oplus L$ if $i = (a+b)/2$.
\end{itemize}
We have two morphisms $p_1, p_2 : C_i \to X_{Iw} \times_{O_K} K$. The morphism $p_1$ corresponds to forgetting $L$, and the morphism $p_2$ is defined as $p_2(A,\lambda,\iota,\eta,H_\bullet,L) = (A/L,\lambda',\iota',\eta',H_\bullet')$, with
\begin{itemize}
\item $H_j' = (H_j+L)/L$ if $j \leq i$.
\item $H_j' = (\pi^{-1} (H_j \cap L) + L) /L$ if $i<j \leq (a+b)/2$.
\end{itemize}
We take the polarization $\lambda'$ to be equal to $p \cdot \lambda$, which is a prime to $p$ polarization. Let $C_i^{an}$ be the analytic space associated to $C_i$, and define $C_{i,rig} := p_1^{-1} (X_{Iw,rig})$. The morphisms $p_1,p_2$ give morphisms $C_{i,rig} \to X_{Iw,rig}$.

\begin{defi}
The $i$-th Hecke operator acting on the subsets of $X_{Iw,rig}$ is defined by 
$$U_{\pi,i} (S) = p_2(p_1^{-1} (S))$$
This operator preserves the admissible open subsets, and quasi-compact admissible open subsets.
\end{defi}

\begin{rema}
The condition $A[\pi] = H_i^\bot \oplus \pi L$ is actually redundant with the condition $A[\pi]=~H_i \oplus~L[\pi]$. Indeed, since $L$ is totally isotropic, we have $\pi L \subset L[\pi]^\bot$ (we denote by $L[\pi]^\bot$ the orthogonal in $A[\pi]$ of $L[\pi]$). Comparing the heights, we see that we have the equality $\pi L = L[\pi]^\bot$.
\end{rema}

Let us denote by $p : A \to A/L$ the universal isogeny over $C_i$. This induces an isomorphism $p^* : \omega_{(A/L)/X} \to \omega_{A/X}$, and thus a morphism 
$p^* (\kappa) :~p_2^* \omega^\kappa \to~p_1^* \omega^\kappa$. For every admissible open $\mathcal{U}$ of $X_{Iw,rig}$, we form the composed morphism
\begin{displaymath}
\widetilde{U}_{\pi,i} :   H^0(U_{\pi,i}(\mathcal{U}),\omega^\kappa) \to H^0 ( p_1^{-1} (\mathcal{U}), p_2^* \omega^\kappa) \overset{p^*(\kappa)}{\to} H^0(p_1^{-1}(\mathcal{U}) , p_1^* \omega^\kappa) \overset{Tr_{p_1}}{\to}  H^0(\mathcal{U},\omega^\kappa)
\end{displaymath}

\begin{defi}
The Hecke operator acting on modular forms is defined by $U_{\pi,i}~=~\frac{1}{p^{N_i}} \widetilde{U}_{\pi,i}$ with 
$$N_i= \sum_{\sigma \in \Sigma_\pi} \Big((a+b) \min(i,a_\sigma) +\max(a_\sigma-i,0)\kappa_\sigma + \max(b_\sigma-a_\sigma,b_\sigma-i) \lambda_\sigma \Big) $$
if $i<(a+b)/2$, and
$$N_{(a+b)/2}= \sum_{\sigma \in \Sigma_\pi} \Big(\frac{(a+b)}{2} a_\sigma + \frac{(b_\sigma-a_\sigma)}{2} \lambda_\sigma \Big) $$
\end{defi}

We will also write 
$$n_i = \sum_{\sigma \in \Sigma_\pi} (a+b) \min(i,a_\sigma)$$
if $i < (a+b)/2$, and
$$n_{(a+b)/2}= \sum_{\sigma \in \Sigma_\pi} \frac{(a+b)}{2} a_\sigma $$
the constant term of $N_i$, which is independent of the weight. \\
Again, the reason for the normalization factor $n_i$ comes into two parts. Then term $(a+b) \min(i,a_\sigma)$ comes from the inseparability degree of the projection $p_1$. The term $\max(a_\sigma-i,0)\kappa_\sigma + \max(b_\sigma-~a_\sigma,b_\sigma-~i) \lambda_\sigma$ comes from the morphism $p^* (\kappa)$. Indeed, we have the following proposition. 

\begin{prop} \label{minorate_u}
Let $M$ be a finite extension of $\mathbb{Q}_p$, let $(A,\lambda,\iota,\eta,H_\bullet)$ be an $O_M$-point of $X_{Iw}$ and $L$ a subgroup of $A[\pi^2]$ as before. If $i< (a+b)/2$, we have 
\begin{displaymath}
\begin{array}{ccc}
\deg_{\sigma_1} L \geq a_\sigma - i  & \text{ and } &  \deg_{\sigma_2} L \geq \max(b_\sigma-i,b_\sigma - a_\sigma) 
\end{array}     
\end{displaymath}
If $i=(a+b)/2$, we have
$$\deg_{\sigma_2} L \geq \frac{b_\sigma - a_\sigma}{2}$$ 
\end{prop}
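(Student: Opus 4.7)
The strategy parallels the linear case (Proposition~\ref{minorate}) for the $\sigma_{1}$-inequality when $i<(a+b)/2$, and uses the fact that $L$ is a \emph{maximal} isotropic subgroup, so that $L=L^{\bot}$, for the remaining bounds.

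Suppose first that $i<(a+b)/2$. The decompositions $A[\pi]=H_{i}\oplus L[\pi]$ and $A[\pi]=H_{i}^{\bot}\oplus\pi L$ in the generic fiber fix the heights of $L[\pi]$ and $\pi L$ as $2f(a+b-i)$ and $2fi$ respectively; combined with the exact sequence $0\to L[\pi]\to L\to\pi L\to 0$ they give $L$ of height $2f(a+b)$, which is the maximal isotropic rank in $A[\pi^{2}]$, so $L=L^{\bot}$. Now $A[\pi]/L[\pi]$ is a finite flat $O_{F}$-group scheme of height $2fi$, hence all its partial degrees are at most $i$. Additivity of partial degrees on $0\to L[\pi]\to A[\pi]\to A[\pi]/L[\pi]\to 0$ gives
\[ \deg_{\sigma_{1}}L[\pi]\ge a_{\sigma}-i, \qquad \deg_{\sigma_{2}}L[\pi]\ge b_{\sigma}-i, \]
and additivity on $0\to L[\pi]\to L\to L/L[\pi]\to 0$ yields $\deg_{\sigma_{j}}L\ge\deg_{\sigma_{j}}L[\pi]$ for $j=1,2$, giving both $\deg_{\sigma_{1}}L\ge a_{\sigma}-i$ and $\deg_{\sigma_{2}}L\ge b_{\sigma}-i$.

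The remaining inequality $\deg_{\sigma_{2}}L\ge b_{\sigma}-a_{\sigma}$ comes from self-duality. Since $L=L^{\bot}\simeq(A[\pi^{2}]/L)^{D,c}$, the Cartier-dual-with-conjugation formula (applied as in the Example, with $h=a+b$ the height parameter of $A[\pi^{2}]/L$) combined with $\deg_{\sigma_{2}}(A[\pi^{2}]/L)=2b_{\sigma}-\deg_{\sigma_{2}}L$ gives the identity
\[ \deg_{\sigma_{2}}L-\deg_{\sigma_{1}}L = b_{\sigma}-a_{\sigma}, \]
and $\deg_{\sigma_{1}}L\ge 0$ completes this case.

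For $i=(a+b)/2$ the argument is almost immediate: $L\subset A[\pi]$ is totally isotropic of maximal height $f(a+b)$, so $L=L^{\bot}$ inside $A[\pi]$. The formula $\deg_{\sigma_{1}}L^{\bot}=a_{\sigma}-(a+b)/2+\deg_{\sigma_{2}}L$ from the preceding example, together with $(a+b)/2-a_{\sigma}=(b_{\sigma}-a_{\sigma})/2$, yields $\deg_{\sigma_{2}}L-\deg_{\sigma_{1}}L=(b_{\sigma}-a_{\sigma})/2$, and once more $\deg_{\sigma_{1}}L\ge 0$ gives the claim. The whole proof is essentially bookkeeping with the partial-degree calculus; the only technical point is that $L/L[\pi]$ must be a finite flat group scheme for additivity of partial degrees to apply, which is standard over the discrete valuation ring $O_{M}$ once $L[\pi]$ is taken as the scheme-theoretic closure of its generic fiber.
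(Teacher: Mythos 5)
Your proof is correct and relies on the same two ingredients as the paper's: the bound on the partial degrees of $A[\pi]/L[\pi]$ coming from its height $2fi$, and the identity $\deg_{\sigma_2}L-\deg_{\sigma_1}L=b_\sigma-a_\sigma$ (resp.\ $(b_\sigma-a_\sigma)/2$ when $i=(a+b)/2$) coming from $L$ being maximal isotropic. The only cosmetic difference is that you obtain $\deg_{\sigma_2}L\ge b_\sigma-i$ directly from the $\sigma_2$-additivity applied to $L[\pi]$, whereas the paper first proves the self-duality identity and deduces both $\sigma_2$-bounds from it together with $\deg_{\sigma_1}L\ge a_\sigma-i$ and $\deg_{\sigma_1}L\ge 0$; these are the same argument in a slightly different order.
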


\begin{proof}
Suppose first that $i<(a+b)/2$. The subgroup $L$ being totally isotropic, we get
$$\deg_{\sigma_2} L = \deg_{\sigma_1} L + b_\sigma - a_\sigma$$
The group $A[\pi]/L[\pi]$ is of height $2fi$, hence we have $\deg_{\sigma_1} A[\pi] / L[\pi] \leq i$. We get 
$$\deg_{\sigma_1} L \geq \deg_{\sigma_1} L[\pi] \geq a_\sigma - i$$
We deduce that $\deg_{\sigma_2} L = \deg_{\sigma_1} L + b_\sigma - a_\sigma \geq \max(b_\sigma-i,b_\sigma - a_\sigma)$. \\
If $i=(a+b)/2$, then $L$ is a maximal totally isotropic subgroup of $A[\pi]$. Thus we have
$$\deg_{\sigma_2} L = \frac{b_{\sigma} - a_{\sigma}}{2} + \deg_{\sigma_1} L \geq \frac{b_{\sigma} - a_{\sigma}}{2}$$
\end{proof}

We have the following proposition concerning the behavior of the Hecke operator regarding the degree function.

\begin{prop}
Let $x=(A,\lambda,\iota,\eta,H_\bullet)$ be a point of $X_{Iw,rig}$, and $y \in U_{\pi,i}(x)$ corresponding to a subgroup $L \in A[\pi^2]$ as before. Write $y=(A/L,\lambda,\iota,\eta,H_\bullet')$ ; then we have
$$\deg H_j' \geq \deg H_j$$
for all $1 \leq j \leq (a+b)/2$. Moreover, if $i< (a+b)/2$, we have
$$\deg H_i' = 2fi - \deg (L/L[\pi])$$
If $\deg H_i' = \deg H_i$, then $\deg H_i \in \mathbb{Z}$. \\
If $i=(a+b)/2$, then 
$$\deg H_{(a+b)/2}' = f(a+b) - \deg (L)$$
If $\deg H_{(a+b)/2}' = \deg H_{(a+b)/2}$, then $d_{(a+b)/2} - \deg H_{(a+b)/2} \in 2 \mathbb{Z}$.
\end{prop}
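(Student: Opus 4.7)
The plan is to mimic, step by step, the linear-case proposition proved right after Proposition~\ref{minorate}, with the unavoidable complication that $L$ now sits in $A[\pi^2]$ (when $i<(a+b)/2$) rather than in $A[\pi^+]$. I split the argument into the two cases $i<(a+b)/2$ and $i=(a+b)/2$, and inside the first case I handle the monotonicity, the explicit formula, and the equality statement separately.

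For the monotonicity with $i<(a+b)/2$ and $j \leq i$, the generic-fiber condition $A[\pi]=H_i \oplus L[\pi]$, together with $H_j \subset H_i$, gives $H_j \cap L = H_j \cap L[\pi] = 0$ on generic fibers, so the natural map $H_j \to H_j' = (H_j+L)/L$ is an isomorphism on generic fibers and the third bullet of Proposition~\ref{degreeprop} concludes. For $i < j \leq (a+b)/2$ I use that $A[\pi] \subset \pi^{-1}(H_j \cap L)$ (since $\pi$ kills $A[\pi]$) to produce the exact sequence
$$0 \to A[\pi] \to \pi^{-1}(H_j \cap L) + L \to H_j \cap L + \pi L \to 0,$$
then combine it with $\pi L \subset L[\pi]$ (from total isotropy of $L$) and the subgroup sum/intersection corollary to deduce $\deg H_j' \geq \deg H_j$, paralleling the linear computation.

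For the formula at $j=i$, the key step is to identify $H_i+L$ with $\pi^{-1}(\pi L)$ as closed subgroup schemes of $A[\pi^2]$. The inclusion $H_i+L \subset \pi^{-1}(\pi L)$ is immediate; equality follows from a height comparison (both are $2f(a+b+i)$, using the generic decomposition $A[\pi]=H_i \oplus L[\pi]$) combined with the fact that flat subgroup schemes agreeing on generic fibers coincide. The exact sequence $0 \to A[\pi] \to \pi^{-1}(\pi L) \to \pi L \to 0$ then gives $\deg(H_i+L) = f(a+b)+\deg \pi L$, and subtracting $\deg L = \deg L[\pi] + \deg \pi L$ produces the stated formula $\deg H_i' = 2fi - \deg(L/L[\pi])$ once I invoke the degree relation between $\deg L[\pi]$ and $\deg \pi L$ forced by $L$ being Lagrangian in $A[\pi^2]$. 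The equality statement $\deg H_i' = \deg H_i \Rightarrow \deg H_i \in \mathbb{Z}$ follows by retracing the chain of inequalities: equality forces $A[\pi] = H_i \oplus L[\pi]$ integrally, so $H_i$ becomes a direct summand of the $BT_1$ scheme $A[\pi]$, hence is itself a $BT_1$, and therefore has integral degree.

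The case $i=(a+b)/2$ is structurally simpler since $L \subset A[\pi]$ and $H_i$ is Lagrangian; one immediately obtains $H_i' = A[\pi]/L$ on generic fibers and $\deg H_i' = f(a+b) - \deg L$. Equality again forces $A[\pi] = H_i \oplus L$ integrally, and for a Lagrangian $H_i \subset A[\pi]$ the partial-degree duality formulas recalled in the unitary example of the degrees subsection give $\deg_{\sigma_2} H_i - \deg_{\sigma_1} H_i = (b_\sigma - a_\sigma)/2$; a short bookkeeping starting from $\deg H_i = 2\sum_\sigma \deg_{\sigma_1} H_i + \tfrac{1}{2}\sum_\sigma (b_\sigma - a_\sigma)$ then yields $d_{(a+b)/2} - \deg H_i \in 2\mathbb{Z}$. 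The main obstacle I anticipate is the scheme-theoretic identification $H_i + L = \pi^{-1}(\pi L)$: their generic fibers obviously agree and both are $O_F$-stable closed subgroup schemes of $A[\pi^2]$, but rigorously justifying that the schematic sum coincides with the schematic preimage over the base requires a careful flatness check for the schematic image, and this is where most of the technical effort will be invested.
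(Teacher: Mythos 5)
Your proposal is correct and follows essentially the same route as the paper's proof: generic-fibre isomorphism for $j\leq i$, submodularity of the degree for $i<j$, the explicit identification $H_i+L=\pi^{-1}(\pi L)$ for the formula, the ``direct-factor-of-a-$BT_1$'' observation for the equality case, and the parity bookkeeping at $i=(a+b)/2$. The paper works with $L[\pi]^\bot$ where you work with $\pi L$, but these are the same group scheme by the remark following the definition of $C_i$, so your identification $H_i+L=\pi^{-1}(\pi L)$ is literally the paper's $A[\pi]+L = \pi^{-1}(L[\pi]^\bot)$ (they also agree with $H_i+L$ because $A[\pi]=H_i\oplus L[\pi]$ in generic fibre and everything is taken to be the flat closure); the flatness point you flag as an ``obstacle'' is therefore already implicit in the paper and is handled by the standard fact that two flat closed subgroup schemes over $O_M$ that agree in generic fibre and are nested must coincide. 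Two minor comments: the inclusion $\pi L\subset L[\pi]$ follows simply from $O_F$-stability and $L\subset A[\pi^2]$, not from total isotropy — what total isotropy gives is the equality $\pi L = L[\pi]^\bot$, which is exactly what you need to obtain the degree relation $\deg\pi L - \deg L[\pi] = 2fi - f(a+b)$ via duality. Also, your intermediate use of $\deg L = \deg L[\pi]+\deg\pi L$ is valid because $L[\pi]$ is the kernel of the surjection $L\twoheadrightarrow \pi L$ of finite flat group schemes (hence $L/L[\pi]\simeq\pi L$), even though the paper's Lemma~\ref{isotrop} conservatively records only the inequality $\deg(L/L[\pi])\leq \deg L[\pi]^\bot$.
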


\begin{proof}
Suppose first that $i < (a+b)/2$. For all $1 \leq j \leq i$, the morphisms $H_j \to H_j'$ is an isomorphism in the generic fiber. Thus the inequality $\deg H_j' \geq \deg H_j$. Suppose $i <j \leq (a+b)/2$. Then, observing that $\pi^{-1} (H_j \cap L) \cap L = L[\pi]$, we get
\begin{displaymath}
\begin{split}
\deg H_j' & = \deg (\pi^{-1} (H_j \cap L) + L ) - \deg L \geq \deg (\pi^{-1} (H_j \cap L)) - \deg L[\pi]  \\
& \geq \deg A[\pi] + \deg (H_j \cap L) - \deg L[\pi] = \deg (H_j + L[\pi]) + \deg (H_j \cap L[\pi]) - \deg L[\pi] \\
& \geq \deg H_j
\end{split}
\end{displaymath}
from the properties of the degree function. \\
Let us calculate the degree of $H_i'$. We have
\begin{displaymath}
\begin{split}
\deg H_i' & = \deg (A[\pi] + L ) /L = \deg (\pi^{-1} L[\pi]^\bot)/L = \deg A[\pi] + \deg L[\pi]^\bot - \deg L \\
& = 2fi + \deg L[\pi] - \deg L = 2fi - \deg (L/L[\pi])
\end{split}
\end{displaymath}
If $\deg H_i' = \deg H_i$, then we have $\deg H_i + \deg L[\pi] = \deg A[\pi]$, and $A[\pi] = H_i \times L[\pi]$. Since $A[\pi]$ is a $BT_1$, so is $H_i$, and its degree is an integer. \\
Suppose now that $i=(a+b)/2$. The same argument as before shows that we still have $\deg H_j' \geq~\deg H_j$, for all $1 \leq j \leq (a+b)/2$. Since $H_{(a+b)/2}' = A[\pi] /L$, we have
$$\deg H_{(a+b)/2}' = \deg A[\pi] - \deg L = f(a+b) - \deg (L)$$
If we have the equality $\deg H_{(a+b)/2}' = \deg H_{(a+b)/2}$, then $H_{(a+b)/2}$ is a $BT_1$, and all its partial degrees are integers. Since $H_{(a+b)/2}$ is totally isotropic, we have the relations 
$$\deg_{\sigma_2} H_{(a+b)/2} = \deg_{\sigma_1} H_{(a+b)/2} + (b_\sigma - a_\sigma)/2$$
for all $\sigma \in \Sigma_{\pi}$. If we note $h_\sigma = \deg_{\sigma_1} H_{(a+b)/2}$, which is an integer under the previous assumption, we have
$$d_{(a+b)/2} - \deg H_{(a+b)/2} = \sum_{\sigma \in \Sigma_\pi} \left( a_\sigma + \frac{(a+b)}{2} - 2 h_{\sigma} - \frac{(b_\sigma - a_\sigma)}{2} \right) = 2 \sum_{\sigma \in \Sigma_{\pi}} (a_\sigma - h_\sigma) \in 2 \mathbb{Z}$$
\end{proof}

We will also need the following useful lemma.

\begin{lemm} $\label{isotrop}$
Let $x=(A,\lambda,\iota,\eta,H_\bullet)$ be a point of $X_{Iw,rig}$, and let $L \in A[\pi^2]$ be a totally isotropic subgroup as before, corresponding to a point of $U_{\pi,i} (x)$, with $i < (a+b)/2$.
We have the inequality 
$$\deg (L/L[\pi]) \leq 2fi - \deg A[\pi] + \deg L[\pi]$$
\end{lemm}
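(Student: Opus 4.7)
The plan is to exploit the equality $\pi L = L[\pi]^{\bot}$ (inside $A[\pi]$) noted in the remark preceding the definition of $U_{\pi,i}$, together with the third item of Proposition \ref{degreeprop}. First I would set up the heights. Since $A[\pi] = H_i \oplus L[\pi]$ and $H_i$ has height $2fi$, the subgroup $L[\pi]$ has height $2f(a+b-i)$; since $L$ is totally isotropic in $A[\pi^2]$ and $\pi L = L[\pi]^{\bot} \subset A[\pi]$, the group $\pi L$ has height $2fi$, and $L$ itself has height $2f(a+b)$.

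Next, consider the multiplication-by-$\pi$ morphism $\pi : L \to L$. Its kernel is $L[\pi]$ and its schematic image is $\pi L$, so it induces a morphism of finite flat group schemes
$$\varphi : L/L[\pi] \longrightarrow \pi L$$
which is an isomorphism in the generic fiber (both sides have the same height $2fi$, and $\varphi$ is generically an isomorphism since it comes from the generic exact sequence $0 \to L[\pi] \to L \to \pi L \to 0$). By the third property of the degree function in Proposition \ref{degreeprop}, this gives
$$\deg (L/L[\pi]) \leq \deg(\pi L).$$

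Finally I compute $\deg (\pi L) = \deg L[\pi]^{\bot}$ using Cartier duality. We have $L[\pi]^{\bot} \simeq (A[\pi]/L[\pi])^{D,c}$ (the conjugation of the $O_F$-action does not affect the total degree), hence
$$\deg L[\pi]^{\bot} = \operatorname{ht}(A[\pi]/L[\pi]) - \deg(A[\pi]/L[\pi]) = 2fi - \bigl(\deg A[\pi] - \deg L[\pi]\bigr),$$
which combined with the previous inequality yields the claim. The only point requiring any care is the identification of the image of $\pi : L \to L$ with $L[\pi]^{\bot}$ integrally (not just generically); but this is precisely what the remark following the definition of $U_{\pi,i}$ provides, so there is no real obstacle.
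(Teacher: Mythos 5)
Your argument is correct and coincides with the paper's own proof: both consider the morphism $L/L[\pi] \to \pi L = L[\pi]^\bot$ induced by multiplication by $\pi$, note that it is an isomorphism in generic fiber, and conclude $\deg(L/L[\pi]) \leq \deg L[\pi]^\bot$ from the degree-increasing property. You merely spell out the height bookkeeping and the Cartier-duality computation of $\deg L[\pi]^\bot$ that the paper leaves implicit.
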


\begin{proof}
The multiplication by $\pi$ gives a morphism $L/L[\pi] \to \pi L$. But $L$ being totally isotropic, we have the relation $\pi L = L[\pi]^\bot$. We thus get a morphism $L/L[\pi] \to L[\pi]^\bot$, which is an isomorphism in generic fiber. Thus the relation
$$\deg (L/L[\pi]) \leq \deg L[\pi]^\bot = 2fi - \deg A[\pi] + \deg L[\pi]$$
\end{proof}

\section{A classicality result}

We will now prove a control theorem, that is to say that an overconvergent modular form is indeed classical under a certain assumption.

\subsection{Decomposition of the Hecke operators}

Fix a rational $\varepsilon >0$. We will fix rationals $\varepsilon_k \in \{\varepsilon, d_k \}$ for $1\leq k \leq s$. Fix also an integer $i$ between $1$ and $s$ such that $\varepsilon_i = \varepsilon$. Since we have assumed that there is only one place $\pi$ of $F_0$ above $p$, we will simply note $\Sigma = \Sigma_\pi$. We define a partition of this set. First, we note for $1 \leq k \leq s$
$$\Sigma_k := \{ \sigma \in \Sigma, a_\sigma = A_k \} $$
We will also note $\Sigma_0 := \{ \sigma \in \Sigma, a_\sigma = 0 \}$, and $\Sigma_{s+1} := \{ \sigma \in \Sigma, a_\sigma = a+b \}$ (of course $\Sigma_{s+1}$ is always empty in case U). The sets $(\Sigma_k)_{0 \leq k \leq s+1}$ form a partition of $\Sigma$. \\
From the collection of $(\varepsilon_k)_{1 \leq k \leq s}$, we define another partition of $\Sigma$. The set $S_1$ is defined to be 
\begin{displaymath}
S_1 = \Sigma_0 \cup \bigcup_{\substack{k \neq i \\ \varepsilon_k = \varepsilon} } \Sigma_k \cup \Sigma_{s+1}
\end{displaymath}
The complement is the set
$$S_2 = \Sigma_i \cup \bigcup_{k,\varepsilon_k = d_k } \Sigma_k$$
Define
$$\mathcal{U}_0 := Deg^{-1} ([d_1-\varepsilon_1,d_1] \times \dots \times [d_s-\varepsilon_s,d_s]) = \bigcap_{k=1}^s Deg_k^{-1}  [d_k-\varepsilon_k,d_k]$$
$$\mathcal{U}_1 := \bigcap_{k \neq i} Deg_k^{-1}  [d_k-\varepsilon_k,d_k]$$ 

We will define a decomposition of the Hecke operator $U_{\pi,A_i}$ on subsets of $\mathcal{U}_1$. Fix a rational $\alpha >0$, and define the integer $t$ to be $1$, except in the unitary case and if $i=(a+b)/2$, where we set $t=2$. For simplicity, we will simply write $U_i$ for $U_{\pi,A_i}$. Define
$$\mathcal{U} := Deg_i^{-1} ([0,d_i-t(1-\alpha)]) \cap \mathcal{U}_1$$

\begin{theo} \label{decompo}
Let $N \geq 1$ and $\beta < \varepsilon$ be a positive rational. There exists a finite ordered set $M_N$, and a decreasing sequence of admissible open subsets $(\mathcal{U}_k (N))_{k \in M_N}$ of $\mathcal{U}$ such that for all $k\geq 0$, we have a decomposition of the operator $U_i^N$ on $\mathcal{U}_{k}(N) \backslash \mathcal{U}_{k+1}(N)$ of the form 
$$ U_{i}^N = \left ( \coprod_{j=0}^{N-1} U_i^{N-1-j} \circ T_j  \right ) \coprod T_N$$
with $T_0 = U_{i,k,N}^{good}$, and for $0 < k < N$
$$T_j = \coprod_{k_1 \in M_{N-1}, \dots, k_j \in M_{N-j}}  U_{i,k_j,N}^{good} U_{i,k_{j-1},k_j,N}^{bad} \dots U_{i,k,k_1,N}^{bad}$$
and
$$T_N = \coprod_{k_1 \in M_{N-1}, \dots, k_{N-1} \in M_1} U_{i,k_{N-1},N}^{bad} U_{i,k_{N-2},k_{N-1},N}^{bad} \dots U_{i,k,k_1,N}^{bad}$$  
such that
\begin{itemize}
\item the images of $U_{i,j,N}^{good}$ ($j \in M_k$) are in $Deg_i^{-1} (]d_i -t(1 - \beta),d_i])$.
\item the images of $U_{i,l,l',N}^{bad}$ ($l \in M_k$, $l' \in M_{k-1})$ and $U_{i,l,N}^{bad}$ ($l \in M_1$) are in $Deg_i^{-1} ([0,d_i -t(1 - \beta)])$
\end{itemize}
\end{theo}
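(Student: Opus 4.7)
My plan is to prove this by induction on $N$, using as the key building block a single-step good/bad splitting of $U_i$ on admissible opens lying inside $\mathcal{U}_1$. The finite indexing set $M_N$ and the decreasing family $(\mathcal{U}_k(N))_{k \in M_N}$ will record the combinatorial tree of branchings: at each application of $U_i$, I partition the possible images according to whether $Deg_i$ falls into the ``good'' region $]d_i - t(1-\beta), d_i]$ or the ``bad'' region $[0, d_i - t(1-\beta)]$, and I track the first iteration at which a good image appears. The summand $U_i^{N-1-j}\circ T_j$ then corresponds to trajectories that are bad for $j$ steps and then become good, while $T_N$ corresponds to trajectories that are bad throughout.

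\noindent\textbf{Single-step splitting.} For an admissible open $\mathcal{V}\subset\mathcal{U}_1$, I realize $U_i|_\mathcal{V}$ through the correspondence
\[
\mathcal{V} \overset{p_1}{\leftarrow} p_1^{-1}(\mathcal{V}) \overset{p_2}{\to} X_{Iw,rig},
\]
with $p_1$ finite. Because the degree function is continuous on the rigid space, the loci $W^{good} = \{y : Deg_i(p_2(y)) > d_i - t(1-\beta)\}$ and $W^{bad} = \{y : Deg_i(p_2(y)) \leq d_i - t(1-\beta)\}$ partition $p_1^{-1}(\mathcal{V})$ into admissible open subsets. Restricting $p_2$ to each one yields operators $U_i^{good}$ and $U_i^{bad}$ whose disjoint union is $U_i|_\mathcal{V}$ and whose images lie in the good and bad regions respectively. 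By the monotonicity inequality $\deg H_j' \geq \deg H_j$ established in the previous subsection, both images remain inside $\mathcal{U}_1$, which is what allows me to iterate.

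\noindent\textbf{Induction on $N$.} For $N = 1$, the splitting above gives $T_0 = U_i^{good}$ and $T_1 = U_i^{bad}$ with a trivial stratification. For $N > 1$, I apply the single-step splitting to the outermost $U_i$ and write
\[
U_i^N \;=\; U_i^{N-1} \circ U_i^{good} \;\sqcup\; U_i^{N-1} \circ U_i^{bad}.
\]
The first summand is $U_i^{N-1} \circ T_0$ with $T_0 = U_i^{good}$. For the second, I apply the inductive decomposition of $U_i^{N-1}$ to the image of $U_i^{bad}$ (which is an admissible open of $\mathcal{U}_1$ lying in the bad region), and compose each resulting piece with $U_i^{bad}$; this produces the $T_j$'s for $j\geq 1$ with the indexing sets $M_{N-1},\dots,M_1$ inherited from the inductive step. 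The stratification $(\mathcal{U}_k(N))_{k\in M_N}$ of $\mathcal{U}$ is built by pulling back the level-$(N-1)$ stratification along the bad branch and refining further so that the single-step decomposition is uniform on each $\mathcal{U}_k(N)\setminus\mathcal{U}_{k+1}(N)$. In particular, $T_0$ restricted to this difference is precisely the local piece $U_{i,k,N}^{good}$ of the statement.

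\noindent\textbf{Main obstacle.} The delicate point is the bookkeeping: at each iteration I must further subdivide the starting stratum to ensure that the image of the bad branch lies in a union of strata of the level-$(N-1)$ stratification on which the next splitting is uniform, which is exactly what produces the decreasing family $(\mathcal{U}_k(N))_{k\in M_N}$. Admissibility of every piece is preserved throughout because $Deg_i$ is continuous on the rigid space and $p_1, p_2$ are rigid-analytic, so all loci cut out by inequalities on $Deg_i \circ p_2$ are admissible open. Finiteness of $M_N$ is automatic since $p_1$ is a finite correspondence of bounded degree, so the branching tree of good/bad choices at depth $N$ has only finitely many leaves; after finitely many refinements the construction stabilizes, and this yields the required finite ordered set $M_N$.
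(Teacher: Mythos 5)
Your approach matches the paper's own sketch in spirit: a pointwise good/bad splitting of $U_i$, iterated and organized by a stratification of $\mathcal{U}$. The paper itself does not give a full proof but describes exactly this pointwise picture and then defers the construction of the strata $(\mathcal{U}_k(N))_{k\in M_N}$ to \cite{Bi1}, Theorem~4.4.1.

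There is, however, a genuine gap in your single-step splitting. You write that $W^{good}$ and $W^{bad}$ partition $p_1^{-1}(\mathcal{V})$ into admissible opens and that ``restricting $p_2$ to each one yields operators $U_i^{good}$ and $U_i^{bad}$ whose disjoint union is $U_i|_\mathcal{V}$.'' This is not correct as stated: the Hecke operator on sections is built from $\mathrm{Tr}_{p_1}$, which requires $p_1$ to be \emph{finite flat} of constant degree, but $p_1$ restricted to $W^{good}$ (or to $W^{bad}$) is in general no longer finite over $\mathcal{V}$ because the number of good versus bad points in each fiber jumps. So there is simply no operator $U_i^{good}$ on all of $\mathcal{V}$, and your inductive machine cannot start without first producing the strata on which this count is constant. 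You acknowledge this in the ``Main obstacle'' paragraph, but that paragraph treats the stratification as a refinement of something already existing, whereas it is logically \emph{prior} to the existence of any of the $U_i^{good}/U_i^{bad}$ pieces. Additionally, ``after finitely many refinements the construction stabilizes'' does not by itself show that each $\mathcal{U}_k(N)$ is admissible open and that the $\mathcal{U}_k(N)\setminus\mathcal{U}_{k+1}(N)$ admissibly cover $\mathcal{U}$; the real input (as in \cite{Bi1}) is that the loci where at least $m$ of the $p_1$-conjugates of a point land in $Deg_i^{-1}(]d_i-t(1-\beta),d_i])$ are cut out by degree inequalities on a finite flat cover and are therefore quasi-compact admissible opens, and that the resulting finite filtration has the needed properties. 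Without this, the finiteness of $M_N$ and the admissibility of the decomposition are asserted but not established.
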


The idea is that the points in $]d_i -t(1 - \beta),d_i]$ are `good' (because the overconvergent modular form will be defined at these points), and the points in $Deg_i^{-1} ([0,d_i -t(1 - \beta)])$ are `bad'. The decomposition in the theorem is then made to ensure that all the operators have either their image in good points, or in bad points.

Let us describe this decomposition by looking at a point $x \in \mathcal{U}$. The set $U_i(\{x\})$ is finite, and has say $N_1$ points in $Deg_i^{-1} ([0,d_i -t(1 - \beta)])$ and $N_2$ in its complement $Deg_i^{-1} (]d_i -t(1 - \beta),d_i])$. Thus one can decompose the operator $U_i$ over $x$ as 
$$U_i = U_{i,x}^{good} \coprod U_{i,x}^{bad}$$
with $U_{i,x}^{bad}$ corresponding to the $N_1$ points in $Deg_i^{-1} ([0,d_i -t(1 - \beta)])$, and $U_{i,x}^{good}$ to the $N_2$ other points. This is a decomposition of operators, meaning that the set $U_i(\{x\})$ is the disjoint union of the sets $U_{i,x}^{good} (\{x\})$ and $U_{i,x}^{bad} (\{x\})$ ; moreover the operators $U_{i,x}^{good}$ and $U_{i,x}^{bad}$ induce morphisms $H^0( U_i(\{x\}), \omega^\kappa) \to H^0(\{x\},\omega^\kappa)$ for each weight $\kappa$ such that
$$U_i f = U_{i,x}^{good} f + U_{i,x}^{bad} f$$
for all $f \in H^0(U_i(\{x\}), \omega^\kappa)$. These morphisms are defined in the same way as the morphism $U_i$, with the same normalization factor. This gives the decomposition of the theorem for $N=1$ at the point $x$. \\
To get the decomposition for $N=2$, one needs to study the operator $U_{i,x}^{bad}$. Let $x_1, \dots, x_{N_1}$ be the elements of the set $U_{i,x}^{bad} (\{x\})$. We can thus decompose the operator $U_{i,x}^{bad}$ in 
$$U_{i,x}^{bad} = \coprod_{j=1}^{N_1} U_{i,x,j}^{bad}$$
where $U_{i,x,j}^{bad}$ corresponds to the point $x_j$. Then for each $1 \leq j \leq N_1$, we have the decomposition of the operator $U_i$ at the point $x_j$ in $U_{i,x_j}^{good} \coprod U_{i,x_j}^{bad}$. One then gets the decomposition of $U_i^2$ at the point $x$
$$U_i^2 = U_i \circ U_{i,x}^{good} \coprod_{j=1}^{N_1} U_{i,x_j}^{good} \circ U_{i,x,j}^{bad}  \coprod_{j=1}^{N_1}  U_{i,x_j}^{bad} \circ U_{i,x,j}^{bad}$$
Repeating this argument, one gets the desired decomposition of the operator $U_i^N$ at the point $x$. Of course, this decomposition does not have a meaning on the whole open $\mathcal{U}$ because the morphisms used to define the different operators will not be finite (the integer $N_1$ depends on the point $x$). But on an adequate subset, this will be the case. That is why one has to construct the subsets $(\mathcal{U}_k (N))_{k \in M_N}$. This construction and the proof of the properties have been done in \cite{Bi1} theorem $4.4.1$.

\subsection{Analytic continuation} \label{analytic}

Let $f$ be a section of the sheaf $\omega^\kappa$ on $\mathcal{U}_0$. We will show that $f$ can be extended to $\mathcal{U}_1$ under a certain condition. More precisely, suppose in this section that $f$ is an eigenform for the Hecke operator $U_{i}$ with eigenvalue $\alpha_{i}$, and that
$$v(\alpha_i) + n_{A_i}  < (1 - 2f\varepsilon) \inf_{\sigma \in S_2} (\kappa_\sigma + \lambda_\sigma)$$
The first step is to extend $f$ to $Deg_i^{-1} (]d_i - t , d_i]) \cap \mathcal{U}_1$. We will use the following proposition.

\begin{prop}
Let $0 < \gamma < 1$ be a rational. Then there exists an integer $N$ such that 
$$U_i^N (Deg_i^{-1} ([d_i - t \gamma,d_i]) \cap \mathcal{U}_1 )  \subset Deg_i^{-1} ([d_i - \varepsilon,d_i]) \cap \mathcal{U}_1$$ 
\end{prop}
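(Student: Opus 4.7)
The argument has three main ingredients: monotonicity of partial degrees under $U_i$, strict monotonicity at non-integer values, and a quasi-compactness/finite-termination argument.

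First I would verify that $U_i$ preserves $\mathcal{U}_1$: by the propositions describing the action of the Hecke correspondence on partial degrees, every $y \in U_i(x)$ satisfies $\deg H_{A_j}(y) \geq \deg H_{A_j}(x)$ for all $j$, so the conditions $\deg H_{A_j} \geq d_j - \varepsilon_j$ (for $j \neq i$) defining $\mathcal{U}_1$ are preserved. Hence the problem reduces to controlling the growth of $\deg H_{A_i}$ under iteration.

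Next, I would recall the formula $\deg H_{A_i}' = \deg A[\pi^+] - \deg L_0$ in the linear case (and its unitary-case analogues), together with the criterion that equality $\deg H_{A_i}'(y) = \deg H_{A_i}(x)$ forces $\deg H_{A_i}(x)$ to be an integer, or $d_i - \deg H_{A_i}(x) \in 2\mathbb{Z}$ when $i = (a+b)/2$ in the unitary case. Consequently, at any point where $\deg H_{A_i}(x)$ fails this integrality condition, every $y \in U_i(x)$ satisfies the strict inequality $\deg H_{A_i}(y) > \deg H_{A_i}(x)$.

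Finally, I would apply Theorem~\ref{decompo} with $\beta = 1 - \varepsilon/t$, so that the ``good'' image region $Deg_i^{-1}(]d_i - \varepsilon, d_i])$ already contains the target. The decomposition writes $U_i^N$ as a union of good operators plus a bad-only tower; the bad tower has image in the quasi-compact admissible open $Deg_i^{-1}([d_i - t\gamma, d_i - \varepsilon]) \cap \mathcal{U}_1$, where the strict-increase property together with the bounded denominators of $\deg H_{A_i}$ on a quasi-compact admissible open yield a uniform lower bound $c > 0$ on the per-iterate increase of $\deg H_{A_i}$ along any bad trajectory. Taking $N \geq (t\gamma - \varepsilon)/c$, the bad tower becomes empty and the inclusion follows, again invoking $U_i$-stability of $\mathcal{U}_1$ to conclude that images remain in $\mathcal{U}_1$.

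\textbf{The main obstacle} is obtaining the uniform lower bound $c > 0$, which rests on showing that on the quasi-compact bad region the values of $\deg H_{A_i}$ have uniformly bounded denominators, so that non-integer strict increase implies a uniform positive jump. This is done by working on the formal model where $\deg H_{A_i}$ is given by a Fitting ideal of controlled length, following the strategy of \cite{Bi1} Section~4 and adapting it from the ordinary-locus setting to the $\mu$-ordinary setting considered here.
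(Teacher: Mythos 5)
Your first two observations are exactly the paper's: $U_i$ preserves $\mathcal{U}_1$ because all partial degrees only increase, and the equality case $\deg H_{A_i}' = \deg H_{A_i}$ forces integrality (resp.\ $d_i - \deg H_{A_i} \in 2\mathbb{Z}$), so on $Deg_i^{-1}([d_i - t\gamma, d_i - \varepsilon]) \cap \mathcal{U}_1$ (which contains no such value since $\gamma < 1$) the increase of $Deg_i$ is strict. After that the two proofs diverge, and your route has a gap in the key step.

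First, invoking Theorem~\ref{decompo} here is unnecessary and somewhat backwards. That theorem merely produces, for a \emph{fixed} $N$, a stratification on which $U_i^N$ admits a good/bad decomposition; it does not by itself tell you that the all-bad tower $T_N$ is eventually empty. Establishing that is precisely the content of the present proposition. The paper instead argues directly with $U_i$ itself: once one has strict increase on a quasi-compact region, one cites the argument of \cite{Pi}~proposition~2.5 and is done; the decomposition machinery is reserved for the second step of the analytic continuation (constructing the $p$-adic series $f_N$).

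Second, and more importantly, the justification you give for the uniform jump $c > 0$ does not work. You claim that $\deg H_{A_i}$ has ``uniformly bounded denominators'' on a quasi-compact admissible open because it is given by a Fitting ideal of controlled length. This is not true: a point of the rigid space may be defined over an extension $L/\mathbb{Q}_p$ of arbitrarily large ramification, and $\deg H_{A_i}(x) \in \tfrac{1}{e(L/\mathbb{Q}_p)}\mathbb{Z}$, so the denominators are unbounded on any quasi-compact set containing a dense family of points. The correct mechanism, which is what \cite{Pi}~proposition~2.5 supplies, is a maximum-modulus argument: consider the correspondence $C$ with $p_1, p_2 : C \rightrightarrows X_{Iw,rig}$ and the quasi-compact admissible open $p_1^{-1}(W) \cap p_2^{-1}(W)$ where $W = Deg_i^{-1}([d_i - t\gamma, d_i - \varepsilon]) \cap \mathcal{U}_1$. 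On it, $\delta := Deg_i \circ p_2 - Deg_i \circ p_1$ is of the form $v(g)$ for a section $g$ of the structure sheaf (coming from the explicit formula $\deg H_{A_i}' = \deg A[\pi^+] - \deg L_0$ and the Fitting-ideal description), and $\delta > 0$ everywhere. Quasi-compactness and the maximum principle for $|g|$ then give $\inf \delta = c > 0$, independent of denominators. With that uniform $c$ in hand your concluding count $N \geq (t\gamma - \varepsilon)/c$ and the $\mathcal{U}_1$-stability remark do finish the proof.
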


\begin{proof}
The key point is that on $Deg_i^{-1} ([d_i - t \gamma,d_i]) \cap \mathcal{U}_1$, the operator $U_i$ increases strictly the degree function $Deg_i$. Since $Deg_i^{-1} ([d_i - t \gamma,d_i]) \cap \mathcal{U}_1$ is a quasi-compact open subset of $X_{Iw,rig}$, one can then apply the argument in \cite{Pi} proposition 2.5.
\end{proof}

\begin{coro}
The overconvergent form $f$ can be extended to $Deg_i^{-1} (]d_i - t , d_i]) \cap \mathcal{U}_1$.
\end{coro}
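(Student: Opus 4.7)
The plan is a standard analytic continuation using the eigenform relation. Since $v(\alpha_i)$ is finite by hypothesis, $\alpha_i \neq 0$, so on $\mathcal{U}_0$ one has the tautological identity $f = \alpha_i^{-N} U_i^N f$ for every $N \geq 1$. The idea is to use this identity as the \emph{definition} of $f$ outside $\mathcal{U}_0$, as soon as $N$ is large enough that iterated application of $U_i$ sends the target point back into $\mathcal{U}_0$, where $f$ is already known.

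Concretely, for a rational $0 < \gamma < 1$ set $V_\gamma := Deg_i^{-1}([d_i - t\gamma, d_i]) \cap \mathcal{U}_1$; these are admissible, quasi-compact, and their union as $\gamma \to 1^-$ equals $Deg_i^{-1}(]d_i - t, d_i]) \cap \mathcal{U}_1$. The propositions of the previous section show that $U_i$ never decreases any degree function $Deg_k$, hence $U_i^N$ preserves $\mathcal{U}_1$; and the proposition just proved furnishes, for each $\gamma$, an integer $N = N(\gamma)$ such that $U_i^N(V_\gamma) \subset Deg_i^{-1}([d_i - \varepsilon, d_i]) \cap \mathcal{U}_1 = \mathcal{U}_0$. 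Since $f$ is already a section of $\omega^\kappa$ on $\mathcal{U}_0$, the composite $U_i^N f$ is a well-defined section on $V_\gamma$, and I set
$$\tilde f_\gamma := \alpha_i^{-N} U_i^N f \;\in\; H^0(V_\gamma, \omega^\kappa).$$

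Two routine checks then complete the construction. First, $\tilde f_\gamma$ restricts to $f$ on $\mathcal{U}_0 \cap V_\gamma$, because there the eigenform relation gives $U_i^N f = \alpha_i^N f$. Second, $\tilde f_\gamma$ is independent of the auxiliary $N$: if $N' > N$ is another valid choice, then on $\mathcal{U}_0$ iteration of the eigenform relation yields $U_i^{N'-N}(U_i^N f) = U_i^{N'-N}(\alpha_i^N f) = \alpha_i^{N'} f$ at the relevant points, whence $\alpha_i^{-N'} U_i^{N'} f = \alpha_i^{-N} U_i^N f$ on $V_\gamma$. The same computation shows the $\tilde f_\gamma$ are mutually compatible as $\gamma$ grows, so they glue to a single section on $\bigcup_{\gamma < 1} V_\gamma = Deg_i^{-1}(]d_i - t, d_i]) \cap \mathcal{U}_1$, as desired.

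There is no real obstacle at this stage; the quantitative bound on $v(\alpha_i)$ plays no role in this corollary. It will be invoked only in the subsequent step, when one attempts to cross the bad region $Deg_i^{-1}([0, d_i - t])$: there the good/bad decomposition of Theorem \ref{decompo} must be combined with the growth of the normalization $N_{A_i}$ along bad branches so as to absorb the divergent contributions, and it is precisely the inequality $v(\alpha_i) + n_{A_i} < (1 - 2f\varepsilon)\inf_{\sigma \in S_2}(\kappa_\sigma + \lambda_\sigma)$ that ensures the corresponding series converges.
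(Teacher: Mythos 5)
Your proposal is correct and coincides with the paper's proof: both use the previous proposition to find, for each $\gamma < 1$, an $N$ with $U_i^N(Deg_i^{-1}([d_i - t\gamma, d_i]) \cap \mathcal{U}_1) \subset \mathcal{U}_0$, then extend $f$ by the formula $\alpha_i^{-N} U_i^N f$. You simply spell out the routine checks (well-definedness independent of $N$, compatibility as $\gamma$ grows, and that $U_i$ preserves $\mathcal{U}_1$ because it increases all degrees) that the paper leaves implicit; these checks are accurate and welcome.
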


\begin{proof}
Let $0 < \gamma < 1$ be a rational. By the previous proposition, there exists an integer $N$ such that
$$U_i^N (Deg_i^{-1} ([d_i - t \gamma,d_i]) \cap \mathcal{U}_1 )  \subset Deg_i^{-1} ([d_i - \varepsilon,d_i]) \cap \mathcal{U}_1$$ 
The quantity $\alpha_i^{-N} U_i^N f$ is thus defined on $Deg_i^{-1} ([d_i - t \gamma,d_i]) \cap \mathcal{U}_1$. This formula allows us to extend $f$ to $Deg_i^{-1} (]d_i - t , d_i]) \cap \mathcal{U}_1$.
\end{proof}

The second step is to define some series on $\mathcal{U} := Deg_i^{-1} ([0,d_i-t(1-\alpha)]) \cap \mathcal{U}_1$, for some rational $\alpha$ sufficiently small. We will use the decomposition of the Hecke operator $U_i$. First, we recall the definition of the norm of an operator. If $\mathcal{U}$ is an open subset of $X_{Iw,rig}$, and $T : H^0 (T(\mathcal{U}),\omega^\kappa) \to~H^0(\mathcal{U},\omega^\kappa)$ is an operator, then we define the norm of $T$ as 
$$ \Vert T \Vert := \inf \{r >0 , |Tf|_\mathcal{U} \leq r |f|_{T(\mathcal{U})}, \forall f \in H^0(T(\mathcal{U}),\omega^\kappa) \}$$

\begin{theo}
Suppose that all the operators $U_i^{bad}$ introduced in the theorem $\ref{decompo}$ satisfy the relation 
$$||\alpha_i^{-1} U_i^{bad} || < 1$$
Then it is possible to construct sections $f_N \in H^0 (\mathcal{U}, \omega^\kappa / p^{A_N})$, such that $A_N \to \infty$. Moreover, the functions $f_N$ can be glued together and with the initial form $f$ to give an element of $H^0 ( \mathcal{U}_1,\omega^\kappa)$.
\end{theo}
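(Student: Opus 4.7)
The plan is to follow the Kassaei-style analytic continuation argument, now powered by the decomposition of $U_i^N$ given in Theorem $\ref{decompo}$. Since $f$ is a $U_i$-eigenform with eigenvalue $\alpha_i$, the formal identity $\alpha_i^N f = U_i^N f$ must hold on any region where both sides are defined. Substituting the decomposition
$$U_i^N = \Bigl(\coprod_{j=0}^{N-1} U_i^{N-1-j}\circ T_j\Bigr) \coprod T_N,$$
the natural candidate is
$$f_N := \alpha_i^{-N}\sum_{j=0}^{N-1} (U_i^{N-1-j}\circ T_j) f,$$
defined piece by piece along the stratification $(\mathcal{U}_k(N)\setminus \mathcal{U}_{k+1}(N))_k$. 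Each $T_j$ with $j<N$ ends in a good operator whose image lies in $Deg_i^{-1}(]d_i-t(1-\beta),d_i])\cap \mathcal{U}_1$, where $f$ has already been extended in Section $\ref{analytic}$; hence every term in the sum uses only values of $f$ in the region where it is known, and $f_N$ is an honest section of $\omega^\kappa$ on $\mathcal{U}$.

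The key point is the decay of the tail $R_N := \alpha_i^{-N} T_N f$. Because $T_N$ is a composition of $N$ bad operators and $||\alpha_i^{-1} U_i^{bad}||<1$ by hypothesis, one obtains a bound $||\alpha_i^{-N} T_N|| \leq r^N$ for some $r<1$. On the overlap of $\mathcal{U}$ with the good region (where $f$ is already known and the identity $\alpha_i^N f = U_i^N f$ holds literally), this gives $f_N \equiv f \pmod{p^{A_N}}$ with $A_N \to \infty$. In particular, $f_{N+1}-f_N$ has $p$-adic norm tending to zero, so $(f_N)$ is Cauchy in $H^0(\mathcal{U},\omega^\kappa)$ and converges to a section $\widetilde{f}$.

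To conclude, one verifies that $\widetilde{f}$ agrees with $f$ on the overlap $\mathcal{U}\cap Deg_i^{-1}(]d_i-t,d_i])\cap\mathcal{U}_1$ — this again follows directly from the estimate on $R_N$ — and then glues $f$ and $\widetilde{f}$ into a section on $\mathcal{U}_1$. The main obstacle is the coherent definition of $f_N$ itself: one must check that the local expressions attached to the different strata $\mathcal{U}_k(N)\setminus \mathcal{U}_{k+1}(N)$ assemble into a single section on $\mathcal{U}$. This rests on the fact, proved in \cite{Bi1} Theorem $4.4.1$ and taken over in Theorem $\ref{decompo}$, that the decomposition of $U_i^N$ is a genuine disjoint-union decomposition of the operator rather than a pointwise identity, so the normalization factors and trace formulae match across the strata.
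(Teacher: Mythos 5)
Your proposal correctly identifies the candidate $f_N = \alpha_i^{-N}\sum_{j=0}^{N-1}(U_i^{N-1-j}\circ T_j)f$ and the role of the norm hypothesis in making the tail $\alpha_i^{-N}T_N f$ small, but it contains a genuine gap at exactly the point you flag as "the main obstacle". You assert that the piecewise expressions on the strata $\mathcal{U}_k(N)\setminus\mathcal{U}_{k+1}(N)$ assemble into an honest section of $\omega^\kappa$ on $\mathcal{U}$, because the decomposition of $U_i^N$ is a disjoint-union decomposition. This is false, and the theorem statement itself already contradicts it: the conclusion is only that $f_N \in H^0(\mathcal{U},\omega^\kappa/p^{A_N})$, not $H^0(\mathcal{U},\omega^\kappa)$. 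The operators $T_j$ and the index sets $M_{N-j}$ in the decomposition genuinely depend on the stratum index $k$ (that is the point of the notation $U_{i,k,N}^{good}$, $U_{i,l,l',N}^{bad}$): as you cross from $\mathcal{U}_k(N)\setminus\mathcal{U}_{k+1}(N)$ to the adjacent stratum, a point of $U_i(x)$ with $Deg_i$ exactly at the threshold $d_i - t(1-\beta)$ can switch from the "good" pool to the "bad" pool, so the operator-valued decomposition changes discontinuously. The local expressions therefore do \emph{not} match on the nose; they only match up to the tail term, i.e.\ modulo $p^{A_N}$.

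The paper's actual argument handles this by running the construction with two nearby parameters $\beta > \beta'$: the strata $\mathcal{U}_k(N)$ built from $\beta$ and the strata $\mathcal{U}_k(N)'$ built from $\beta'$ overlap, and on the overlaps one compares the two local expressions and shows they agree modulo $p^{A_N}$ (this is \cite{Bi1} Proposition $4.5.6$). Iterating over $k$ produces $f_N$ as a section of $\omega^\kappa/p^{A_N}$ on $\mathcal{U}$. Because the $f_N$ live in quotient sheaves rather than in $H^0(\mathcal{U},\omega^\kappa)$, your appeal to a Cauchy sequence in $H^0(\mathcal{U},\omega^\kappa)$ does not literally apply; one instead needs Kassaei's gluing lemma (\cite{Bi1} Proposition $4.5.7$, after \cite{Ka}) to promote a compatible system of sections modulo $p^{A_N}$ with $A_N\to\infty$ to a genuine section, which is then glued with $f$ on the overlap with the good region. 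So the skeleton of your proof is the right one, but the two technical devices you omit --- the two-parameter overlapping stratification and the Kassaei gluing lemma --- are precisely what makes the "coherent definition" problem go away, and your claimed shortcut around them does not work.
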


The construction of the series $f_N$, and the proof of the gluing process have been done in \cite{Bi1} section $4.5$. Let us describe briefly the method. We take an element $\beta >0$, and consider the open subsets $(\mathcal{U}_{k} (N))_{k \in M_N}$ constructed in the theorem \ref{decompo} for this element. Let $K_N$ be the largest element of $M_N$ ; neglecting the bad points for the operator $U_i^N$ gives a function $g_{K_N} \in H^0(\mathcal{U}_{K_N} (N), \omega^\kappa)$ (we refer to \cite{Bi1} definition 4$.5.2$ for the precise definition of the function $g_{K_N}$). Now take a $\beta' < \beta$, and apply again the theorem \ref{decompo} for this other element : we get open subsets $(\mathcal{U}_{k} (N)')_{k \in M_N}$. We consider the subset $\mathcal{U}_{K_N - 1} (N)' \backslash \mathcal{U}_{K_N} (N)'$, and the decomposition of $U_i^N$ on this open. Neglecting the bad points gives a function $g_{K_N - 1} \in H^0(\mathcal{U}_{K_N - 1} (N)' \backslash \mathcal{U}_{K_N} (N)', \omega^\kappa)$. One can then show that $g_{K_N}$ and $g_{K_N - 1}$ can be glued together to give a function modulo $p^{A_N}$, where $A_N$ is an explicit constant. One thus gets a function $f_N \in  H^0(\mathcal{U}_{K_N - 1} (N)', \omega^\kappa / p^{A_N})$ (see \cite{Bi1} proposition $4.5.6$ for more details). Repeating this argument, one thus extends the function $f_N$ to the whole $\mathcal{U}$. The hypothesis on the operators implies that the sequence $(A_N)_N$ tends to infinity. A gluing lemma (\cite{Bi1} proposition $4.5.7$, following \cite{Ka}) then ensures the existence of a function $f \in H^0(\mathcal{U}, \omega^\kappa)$, which can be glued with the initial function $f$, thus getting a section on $\mathcal{U}_1$. \\

We will now prove that, under the assumption made at the beginning of section \ref{analytic}, the condition in the theorem is fulfilled, that is to say that the norm of the operators $\alpha_i^{-1} U_i^{bad}$ is strictly less than $1$. We will split the discussion between the linear and unitary cases.

\subsubsection{Linear case}

We recall that the integer $t$ is equal to $1$ in that case. Let $M$ be a finite extension of $\mathbb{Q}_p$, let $x=(A,\lambda,\iota,\eta,H_\bullet)$ be a point of $\mathcal{U}$ defined over $O_M$, and $L=L_0 \oplus L_0 ^\bot$ be a subgroup of $A[\pi]$, where $L_0$ is an $O_F$-stable subgroup of $A[\pi^+]$ with $A[\pi^+] = H_{A_i} \oplus L_0$ in generic fiber. Let us denote $A' = A/L$, and let $H_{A_i}'$ be the image of $H_{A_i}$ in $A/L$. Thus
$$H_{A_i}' = A[\pi^+]/L_0$$
We suppose that the subgroup $L$ corresponds to a bad point. That is to say that $\deg H_{A_i}' \leq d_i - 1 + \alpha$ for a certain rational $\alpha >0$. We write $\deg_\sigma L_0 = \max(a_\sigma-A_i,0) + l_\sigma$ for all $\sigma \in \Sigma$. As it is shown by the proposition $\ref{minorate}$, $l_\sigma$ is a positive rational for all $\sigma$. We then have for all $\sigma$
$$ \deg_\sigma H_{A_i}' = a_\sigma - \deg_\sigma L_0 = \min(A_i,a_\sigma) - l_\sigma$$
We deduce that $\deg H_{A_i}' = d_i - \sum_{\sigma \in \Sigma} l_\sigma$. The condition of being a bad point gives
$$\sum_{\sigma \in \Sigma} l_\sigma \geq 1 - \alpha$$
Actually, we can control some of the $l_\sigma$. First, we prove the following technical lemma.

\begin{lemm} \label{technic_l}
Let $x=(A,\lambda,\iota,\eta,H_\bullet)$ be a point as before, and let $H \subset A[\pi^+]$ be an $O_F$-stable subgroup. If $H$ is of height $fA_k$ and $\deg H \geq d_k - \varepsilon$, then $\deg_{\sigma} H \geq \min(a_{\sigma},A_k) - \varepsilon$ for all $\sigma \in \Sigma$. \\
If $H$ is of height $f(a+b - A_k)$, and $\deg H \leq \deg A[\pi^+] - d_k + \varepsilon$, then $\deg_\sigma H \leq \max(a_\sigma-A_k,0) + \varepsilon$ for all $\sigma \in \Sigma$.
\end{lemm}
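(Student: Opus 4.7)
The statement is a "rigidity" principle for partial degrees: if the sum $\deg H = \sum_\sigma \deg_\sigma H$ is close to its extremal value, then each individual summand is already close to its own extremal value. My plan has three steps, the same template for both halves of the lemma: (i) establish the sharp pointwise bound on $\deg_\sigma H$ at each $\sigma$; (ii) verify that summing the pointwise bounds reproduces the extremal value of $\deg H$; (iii) conclude from the non-negativity of the pointwise defects.

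For the first half, I would start from the short exact sequence $0 \to H \to A[\pi^+] \to A[\pi^+]/H \to 0$. Additivity of the partial degree gives $\deg_\sigma H \le \deg_\sigma A[\pi^+] = a_\sigma$, while the general bound $\deg_\sigma H \in [0,A_k]$ (since $H$ has height $fA_k$, from the second bullet of the partial-degree proposition) gives $\deg_\sigma H \le A_k$. Together these yield the pointwise bound $\deg_\sigma H \le \min(a_\sigma,A_k)$, whose sum over $\sigma$ is exactly $d_k$. Setting $u_\sigma := \min(a_\sigma,A_k) - \deg_\sigma H \ge 0$, the hypothesis $\deg H \ge d_k - \varepsilon$ reads $\sum_\sigma u_\sigma \le \varepsilon$, and non-negativity forces $u_\sigma \le \varepsilon$ individually.

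For the second half I would run exactly the same argument on the quotient $A[\pi^+]/H$, which has height $fA_k$. One gets $\deg_\sigma(A[\pi^+]/H) \le \min(a_\sigma,A_k)$, and additivity rewrites this as $\deg_\sigma H \ge a_\sigma - \min(a_\sigma,A_k) = \max(a_\sigma-A_k,0)$. Summing gives $\sum_\sigma \max(a_\sigma-A_k,0) = \deg A[\pi^+] - d_k$, so setting $v_\sigma := \deg_\sigma H - \max(a_\sigma-A_k,0) \ge 0$ and using the hypothesis $\deg H \le \deg A[\pi^+] - d_k + \varepsilon$ gives $\sum_\sigma v_\sigma \le \varepsilon$, hence $v_\sigma \le \varepsilon$ pointwise.

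I do not expect any real obstacle: the lemma is a formal consequence of the additivity and the elementary extremal bounds for partial degrees recalled earlier. The only point of care is to identify the correct pointwise bound in each case, which depends simultaneously on the height of $H$ (bound $\leq A_k$) and on the bound coming from $H \subset A[\pi^+]$ (bound $\leq a_\sigma$), and dually for the second half via the quotient.
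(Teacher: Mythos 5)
Your proof is correct and follows essentially the same approach as the paper: establish the pointwise bound $\deg_\sigma H \le \min(a_\sigma,A_k)$ (or the dual lower bound), observe that the bounds sum to the extremal value $d_k$ (resp.\ $\deg A[\pi^+]-d_k$), and conclude from non-negativity of the per-$\sigma$ defects. The paper phrases the first half as a proof by contradiction (if some $\deg_\sigma H < \min(a_\sigma,A_k)-\varepsilon$, the total sum drops below $d_k-\varepsilon$) and handles the second half by applying the first to $A[\pi^+]/H$, exactly as you do; your formulation with the deficit variables $u_\sigma, v_\sigma$ is a cosmetic repackaging of the same argument.
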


\begin{proof}
Suppose that $H$ is of height $f A_k$ and $\deg H \geq d_k - \varepsilon$. If $\deg_\sigma H < \min(a_\sigma,A_k) - \varepsilon$ for some $\sigma \in \Sigma$, then
$$\deg H = \sum_{\sigma' \in \Sigma} \deg_{\sigma'} H < \min(a_\sigma,A_k) - \varepsilon + \sum_{\sigma' \neq \sigma} \min(a_{\sigma'},A_k) = d_k - \varepsilon$$
and we get a contradiction. \\
If $H$ is of height $f(a+b - A_k)$, and $\deg H \leq \deg A[\pi^+] - d_k + \varepsilon$, then we can apply the previous argument to $A[\pi^+] / H$. We get $\deg_{\sigma} (A[\pi^+] / H) \geq \min(a_{\sigma},A_k) - \varepsilon$ for all $\sigma \in \Sigma$, and therefore $\deg_\sigma H \leq \max(a_\sigma-A_k,0) + \varepsilon$.
\end{proof}

Now we prove a bound for some of the $l_\sigma$.

\begin{lemm}
If $\sigma \in S_1$, we have $l_\sigma \leq \varepsilon$.
\end{lemm}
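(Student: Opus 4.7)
The set $S_1 = \Sigma_0 \cup \bigcup_{k \neq i,\, \varepsilon_k = \varepsilon} \Sigma_k \cup \Sigma_{s+1}$ splits into three pieces, and the plan is to bound $l_\sigma$ separately on each. The extremal cases are immediate from the height bounds on $L_0 \subset A[\pi^+]$: if $\sigma \in \Sigma_0$ then $a_\sigma = 0$ and $\deg_\sigma L_0 \leq a_\sigma = 0$, forcing $l_\sigma = 0$; if $\sigma \in \Sigma_{s+1}$ then $a_\sigma = a+b$ and the partial-degree bound $\deg_\sigma L_0 \leq \min(a+b-A_i, a_\sigma) = a+b-A_i = \max(a_\sigma - A_i, 0)$ again gives $l_\sigma = 0$. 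The substantial case is therefore $\sigma \in \Sigma_k$ with $k \neq i$ and $\varepsilon_k = \varepsilon$.

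The strategy there is first to produce a per-embedding lower bound on $\deg_\sigma H_{A_i}$ using the canonical-subgroup hypothesis at $H_{A_k}$. The condition $x \in \mathcal{U}_1$ gives $\deg H_{A_k} \geq d_k - \varepsilon$, and Lemma \ref{technic_l} upgrades this to $\deg_\sigma H_{A_k} \geq A_k - \varepsilon$. By Proposition \ref{incl_A} the canonical subgroups form a chain: if $k < i$, the inclusion $H_{A_k} \subset H_{A_i}$ yields $\deg_\sigma H_{A_i} \geq A_k - \varepsilon$ directly; if $k > i$, the inclusion $H_{A_i} \subset H_{A_k}$ combined with additivity of partial degrees in the short exact sequence $0 \to H_{A_i} \to H_{A_k} \to H_{A_k}/H_{A_i} \to 0$ and the height bound $\deg_\sigma(H_{A_k}/H_{A_i}) \leq A_k - A_i$ yields $\deg_\sigma H_{A_i} \geq A_i - \varepsilon$. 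In either case, $\deg_\sigma H_{A_i} \geq \min(a_\sigma, A_i) - \varepsilon$.

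To conclude, I identify $\deg_\sigma L_0$ with $a_\sigma - \deg_\sigma H_{A_i}$ via the canonical map $\varphi : L_0 \to A[\pi^+]/H_{A_i}$. The generic-fibre equality $A[\pi^+] = H_{A_i} \oplus L_0$ together with the flatness of $A[\pi^+]$ over $O_M$ forces $L_0 + H_{A_i} = A[\pi^+]$ scheme-theoretically, so $\varphi$ is scheme-theoretically surjective. Its Cartier dual $\varphi^D$ is then a closed immersion between finite flat $O_M$-group schemes of the same order $p^{f(a+b-A_i)}$, which forces $\varphi^D$, and hence $\varphi$ itself, to be an isomorphism. Therefore $\deg_\sigma L_0 = a_\sigma - \deg_\sigma H_{A_i} \leq \max(a_\sigma - A_i, 0) + \varepsilon$, that is to say $l_\sigma \leq \varepsilon$.

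The main obstacle is precisely this last identification: the Remark following Proposition \ref{degreeprop} cautions that a mere generic-fibre isomorphism does not in general equate partial degrees, so I really need to exploit the additional scheme-theoretic surjectivity of $\varphi$ (coming from the complementarity in generic fibre together with flatness of $A[\pi^+]$) and dualise in order to upgrade the generic-fibre isomorphism to a genuine one. Once that step is secured, the bound $l_\sigma \leq \varepsilon$ is purely formal bookkeeping with the canonical-subgroup partial-degree inequalities assembled in the second paragraph.
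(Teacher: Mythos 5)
Your treatment of the extremal cases $\sigma\in\Sigma_0\cup\Sigma_{s+1}$ is correct, and the lower bound $\deg_\sigma H_{A_i}\geq\min(a_\sigma,A_i)-\varepsilon$ that you assemble in the second paragraph (via Lemma \ref{technic_l} together with the chain $H_{A_1}\subset\cdots\subset H_{A_s}$ and additivity of partial degrees) is also correct. The gap is entirely in the third paragraph, and unfortunately it is fatal.

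You want to conclude $\deg_\sigma L_0 = a_\sigma-\deg_\sigma H_{A_i}$ by showing $\varphi:L_0\to A[\pi^+]/H_{A_i}$ is an isomorphism, and you try to obtain this by noting that $\varphi$ is scheme-theoretically surjective and dualizing. But scheme-theoretic surjectivity only says $\varphi^\sharp:\mathcal{O}(A[\pi^+]/H_{A_i})\hookrightarrow\mathcal{O}(L_0)$ is injective; dualizing over the DVR $O_M$ gives another \emph{injection} $(\varphi^D)^\sharp:\mathcal{O}(L_0^D)\hookrightarrow\mathcal{O}((A[\pi^+]/H_{A_i})^D)$, i.e.\ $\varphi^D$ is again scheme-theoretically dominant, \emph{not} a closed immersion. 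A closed immersion on the dual side would require $\varphi^\sharp$ to be \emph{surjective}, which is exactly the statement that $\varphi$ is an isomorphism --- so the argument is circular. (Cartier duality turns a faithfully flat quotient map into a closed immersion, but $\varphi$ here is not faithfully flat.) Indeed, if $\varphi$ were always an isomorphism one would have $\deg H_{A_i}+\deg L_0=\deg A[\pi^+]$ for every choice of complement $L_0$, which the paper's own analysis of the Hecke operator explicitly rules out: it shows that equality $\deg H_i'=\deg H_i$, equivalently $\deg H_{A_i}+\deg L_0=\deg A[\pi^+]$, forces $\deg H_i\in\mathbb{Z}$ and so holds only in a degenerate situation. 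The Remark after Proposition \ref{degreeprop} that you cite is precisely the obstruction, and it cannot be removed.

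The paper's proof avoids comparing $\deg_\sigma L_0$ directly to $\deg_\sigma H_{A_i}$. For $k<i$ it works inside the quotient $A[\pi^+]/L_0$: the image $H_{A_k}'=(H_{A_k}+L_0)/L_0$ receives a generic isomorphism from $H_{A_k}$, so $\deg H_{A_k}'\geq d_k-\varepsilon$ and Lemma \ref{technic_l} gives $\deg_\sigma H_{A_k}'\geq A_k-\varepsilon$; on the other hand $H_{A_k}'\subset H_{A_i}'=A[\pi^+]/L_0$ gives the upper bound $\deg_\sigma H_{A_k}'\leq a_\sigma-\deg_\sigma L_0 = A_k-l_\sigma$, whence $l_\sigma\leq\varepsilon$. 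For $k>i$ the argument is run on $L_k'=L_0/(L_0\cap H_{A_k})$ mapping into $A[\pi^+]/H_{A_k}$. In both cases the crucial device is to apply the partial-degree estimate to an object whose \emph{total} degree is controlled (namely an image of, or a generic-iso target of, $H_{A_k}$), and then to exploit an actual inclusion (not a mere generic isomorphism) to transfer that bound to $L_0$. Your chain estimate on $\deg_\sigma H_{A_i}$ could be salvaged by combining it with an analogous lower bound on $\deg_\sigma H_{A_i}'$ in the quotient, rather than by trying to upgrade $\varphi$ to an isomorphism.
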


\begin{proof}
If $\sigma \in \Sigma_0 \cup \Sigma_{s+1}$, then $l_\sigma=0$. If not, $\sigma \in \Sigma_k$, with $1 \leq k \leq s$, $k \neq i$ and $\varepsilon_k = \varepsilon$. Since $x$ is a point of $\mathcal{U}$, we have $\deg H_{A_k} \geq d_k - \varepsilon$. Suppose first that $k < i$. Since $H_{A_k}$ and $L_0$ are disjoint, the morphism $H_{A_k} \to H_{A_k}':=(H_{A_k} + L_0)/ L_0$ is an isomorphism in the generic fiber. Thus by the properties of the degree function $\deg H_{A_k}' \geq \deg H_{A_k} \geq d_k - \varepsilon$. By the lemma $\ref{technic_l}$, we get $\deg_\sigma H_{A_k}' \geq A_k - \varepsilon$. But we also have $\deg_\sigma H_{A_k}' \leq \deg_\sigma H_{A_i}' = \min(A_i,a_\sigma) - l_ \sigma = A_k - l_\sigma$. In conclusion, we get $l_\sigma \leq \varepsilon$. \\
The case $k > i$ can be treated by duality, considering the group $A[\pi^-]$. We can also give a direct argument. Let us denote the group $L_0 / (L_0 \cap H_{A_k})$ by $L_k'$. Since $H_{A_k}$ and $L_0$ generate $A[\pi^+]$, the morphism $ L_k' \to A[\pi^+] / H_{A_k}$ is an isomorphism in generic fiber, so we get $\deg L_k' \leq~\deg (A[\pi^+] / H_{A_k})$. But we have
$$\deg (A[\pi^+] / H_{A_k} ) = \deg A[\pi^+] - \deg H_{A_k} \leq \deg A[\pi^+] - d_k + \epsilon$$
From the lemma $\ref{technic_l}$, we get $\deg_\sigma L_k' \leq \varepsilon$. Since $L_0 \cap H_{A_k}$ is of height $f(A_k-A_i)$, we have
$$l_\sigma = \deg_\sigma L_0 - (A_k - A_i) = \deg_\sigma L_k' + \deg_\sigma (L_0 \cap H_{A_k}) - (A_k-A_i) \leq \varepsilon$$
\end{proof}

\begin{rema}
Actually, we can have more control on the $l_\sigma$. Indeed, suppose that there exists $\sigma \in \Sigma_k \cap S_1$. If $k<i$, then we have $l_\sigma \leq \varepsilon$ for all $\sigma \in \Sigma_j$, with $j \leq k$. If $k>i$, then we have $l_\sigma \leq \varepsilon$ for all $\sigma \in \Sigma_j$, with $j \geq k$.
\end{rema}

Putting together all the calculations made, we get the following result.

\begin{prop}
We have
$$\sum_{\sigma \in S_2} l_\sigma \geq 1 - \alpha - f \varepsilon$$
\end{prop}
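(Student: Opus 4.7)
The plan is to combine the two main inequalities that have already been established in the preceding discussion. First, the hypothesis that the point is a bad point translates into
\[
\sum_{\sigma \in \Sigma} l_\sigma \geq 1 - \alpha,
\]
as computed just before the lemmas. Second, the preceding lemma (together with its remark) shows that for every $\sigma \in S_1$ we have the pointwise bound $l_\sigma \leq \varepsilon$.

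Since $\Sigma = S_1 \sqcup S_2$ is a partition and $|\Sigma| = f$, we get $|S_1| \leq f$, and hence
\[
\sum_{\sigma \in S_1} l_\sigma \leq |S_1|\,\varepsilon \leq f\varepsilon.
\]
Subtracting this from the first inequality yields
\[
\sum_{\sigma \in S_2} l_\sigma = \sum_{\sigma \in \Sigma} l_\sigma - \sum_{\sigma \in S_1} l_\sigma \geq (1-\alpha) - f\varepsilon,
\]
which is the claim.

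There is no genuine obstacle here: the statement is purely bookkeeping, combining the global lower bound for $\sum_\sigma l_\sigma$ (coming from the bad point assumption) with the uniform upper bound on each $l_\sigma$ for $\sigma \in S_1$ (coming from the previous lemma, which exploits that in the directions indexed by $S_1$ the degree of $H_{A_k}$ is already close to maximal, so $L_0$ cannot absorb much degree in those directions). The only mild care needed is to notice that the partition $\Sigma = S_1 \sqcup S_2$ has $|S_1|+|S_2|=f$, so that the error term is at worst $f\varepsilon$ rather than something depending on $|S_1|$ more delicately; this is exactly what the proposition asserts.
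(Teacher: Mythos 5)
Your proof is correct and takes essentially the same approach as the paper: combine the global lower bound $\sum_{\sigma\in\Sigma} l_\sigma \geq 1-\alpha$ coming from the bad-point assumption with the pointwise bound $l_\sigma \leq \varepsilon$ for $\sigma\in S_1$ from the preceding lemma, then use $|S_1|\leq|\Sigma|=f$ to bound the subtracted sum by $f\varepsilon$.
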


\begin{proof}
If $\sigma \in S_1$, we have $l_\sigma \leq \varepsilon$, and we also have $\sum_{\sigma \in \Sigma} l_\sigma \geq 1 - \alpha$. Thus
$$\sum_{\sigma \in S_2} l_\sigma \geq 1 - \alpha - \sum_{\sigma \in S_1} l_\sigma \geq 1 - \alpha - f \varepsilon$$
\end{proof}

We can now prove the bound for the norm of the operator $U_i^{bad}$.

\begin{prop}
We have 
$$|| \alpha_i^{-1} U_i^{bad} || \leq p^{ v(\alpha_i) + n_{A_i} - (1 - \alpha - 2f \varepsilon) \inf_{\sigma \in S_2} (\kappa_\sigma + \lambda_\sigma)    }$$
\end{prop}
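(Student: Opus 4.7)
The plan is to estimate $|U_i^{\text{bad}} f|$ pointwise on the bad locus. The operator decomposes as pullback by $p_2$, followed by $p^*(\kappa)$ induced by the isogeny $p : A \to A/L$, followed by $Tr_{p_1}$, all scaled by $p^{-N_{A_i}}$. Pullback is an isometry and the trace has operator norm at most $1$, so the nontrivial estimate is the norm of $p^*(\kappa)$. For this I would exploit the exact sequence $0 \to \omega_{A/L} \to \omega_A \to \omega_L \to 0$ and its $O_F$-graded decomposition: for each $\sigma \in \Sigma$, the $\sigma^+$-part of the map $\omega_{A/L} \to \omega_A$ has determinant of valuation $\deg_\sigma L_0$, while the $\sigma^-$-part has determinant of valuation $\deg_\sigma L_0^\bot$. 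Since $\kappa$ is dominant with smallest entries $\kappa_\sigma$ on the $\sigma^+$-block and $\lambda_\sigma$ on the $\sigma^-$-block, a linear-algebra computation on the highest-weight vector shows that the induced scaling on $\omega^\kappa$ has valuation at least
$$M := \sum_{\sigma \in \Sigma} \bigl( \deg_\sigma L_0 \cdot \kappa_\sigma + \deg_\sigma L_0^\bot \cdot \lambda_\sigma \bigr).$$

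Next I would substitute $\deg_\sigma L_0 = \max(a_\sigma - A_i, 0) + l_\sigma$ and, via $L_0^\bot \simeq (A[\pi^+]/L_0)^D$ together with the duality formula $\deg_\sigma H^\bot = b_\sigma - h + \deg_\sigma H$ of Example 1.3.7, obtain the parallel identity $\deg_\sigma L_0^\bot = \max(A_i - a_\sigma, 0) + l_\sigma$. Splitting $M$ into its minimal part and its excess gives
$$M = (N_{A_i} - n_{A_i}) + \sum_{\sigma \in \Sigma} l_\sigma (\kappa_\sigma + \lambda_\sigma),$$
so the normalizing factor $p^{N_{A_i}}$ cancels exactly the minimal part, leaving
$$|U_i^{\text{bad}} f| \leq p^{n_{A_i}} \cdot p^{-\sum_\sigma l_\sigma(\kappa_\sigma + \lambda_\sigma)} \cdot |f|.$$

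Finally I would invoke the immediately preceding proposition, which supplies $\sum_{\sigma \in S_2} l_\sigma \geq 1 - \alpha - f\varepsilon$, together with the positivity of $\kappa_\sigma + \lambda_\sigma$ for $\sigma \in S_2$ (guaranteed by the standing hypothesis on $v(\alpha_i) + n_{A_i}$). Dropping the nonnegative $S_1$-contribution and bounding each $\kappa_\sigma + \lambda_\sigma$ by $\inf_{\sigma \in S_2}(\kappa_\sigma + \lambda_\sigma)$ yields
$$\sum_\sigma l_\sigma(\kappa_\sigma + \lambda_\sigma) \geq (1 - \alpha - 2f\varepsilon) \inf_{\sigma \in S_2}(\kappa_\sigma + \lambda_\sigma),$$
the factor $2f\varepsilon$ (rather than $f\varepsilon$) allowing slack that keeps the estimate uniform on the bad open; multiplying by $|\alpha_i^{-1}| = p^{v(\alpha_i)}$ gives the stated norm bound. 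The main obstacle is the first step: producing the clean valuation estimate for $p^*(\kappa)$, which requires translating the partial degree data into a lower bound on the induced scaling on sections of $\omega^\kappa$, and justifying that the smallest weight components $\kappa_\sigma, \lambda_\sigma$ are the correct universal lower bounds because $\kappa$ is dominant and the elementary divisors can concentrate on any coordinate.
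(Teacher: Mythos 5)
Your proof follows the same overall strategy as the paper: estimate the operator norm factor by factor (pullback, $p^*(\kappa)$, trace, normalization), reduce the $p^*(\kappa)$ norm to the partial degrees of $L_0$ and $L_0^\perp$, and then plug in the lower bound $\sum_{\sigma\in S_2} l_\sigma \geq 1-\alpha-f\varepsilon$ from the preceding proposition. The decomposition $M = (N_{A_i}-n_{A_i}) + \sum_\sigma l_\sigma(\kappa_\sigma+\lambda_\sigma)$ and the final non-archimedean estimate are exactly the paper's computation.

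One point worth flagging: the obstacle you identify at the end --- that you must justify that the smallest weight components $\kappa_\sigma,\lambda_\sigma$ give a correct lower bound, because elementary divisors ``can concentrate on any coordinate'' --- is precisely what the paper's technique avoids. Rather than tracking elementary divisors through the Schur-functor $\omega^\kappa$, the paper writes $\kappa = \kappa_1 + \kappa_2$ with $\kappa_1 = ((\kappa_\sigma,\dots,\kappa_\sigma),(\lambda_\sigma,\dots,\lambda_\sigma))_\sigma$ and $\kappa_2 = \kappa - \kappa_1$. Since $\kappa_2$ has non-negative entries, the map $p^*:\omega_{A/L}^{\kappa_2}\to\omega_A^{\kappa_2}$ has norm $\leq 1$ with no analysis needed; and $\omega^{\kappa_1}$ is a line bundle whose transition under $p^*$ is literally a power of the determinant, whose valuation is $\deg_\sigma L_0$ (resp.\ $\deg_\sigma L_0^\perp$) per factor. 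This converts your ``linear-algebra computation on the highest-weight vector'' into a clean tensor-decomposition argument, and you may want to adopt it.

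Also, your stated reason for passing from $f\varepsilon$ to $2f\varepsilon$ (``slack that keeps the estimate uniform on the bad open'') is not what drives the choice. The estimate is already uniform on the bad open with $f\varepsilon$ in the linear case; the paper weakens $1-\alpha-f\varepsilon\geq 1-\alpha-2f\varepsilon$ solely so that the same inequality holds verbatim in the unitary case, where the control on the $l_\sigma$ for $\sigma\in S_1$ is only $l_\sigma \leq 2\varepsilon$ and the preceding proposition naturally yields the $2f\varepsilon$ term. This is harmless but worth stating accurately.
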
 

\begin{proof}
The term $p^{v(\alpha_i)}$ is the norm of the element $\alpha_i^{-1}$. Recall also the normalization factor for the Hecke operator
$$N_{A_i} = n_{A_i} + \sum_{\sigma \in \Sigma_\pi} \Big( \max(a_\sigma-A_i,0) \kappa_\sigma + \max(A_i-a_\sigma,0) \lambda_\sigma \Big) $$
The first term will come in the bound, and the second term will be canceled out by the bounds for the partial degrees of $L_0$ and $L_0^\bot$. \\
Indeed, let us calculate the norm of the morphism $\omega_{A/L}^\kappa \to \omega_{A}^\kappa$. Let $\kappa_1$ be the weight defined by $((\kappa_\sigma, \dots, \kappa_\sigma) , (\lambda_\sigma, \dots, \lambda_\sigma))_{\sigma \in \Sigma}$, and $\kappa_2 = \kappa - \kappa_1$. Thus $\omega^\kappa = \omega^{\kappa_1} \otimes \omega^{\kappa_2}$ and $\omega^{\kappa_1}$ is a line bundle. Since $\kappa_2$ has non-negative coefficients, the morphism $\omega_{A/L}^{\kappa_2} \to \omega_{A}^{\kappa_2}$ has a norm less than $1$. It then suffices to study the morphism $\omega_{A/L}^{\kappa_1} \to \omega_{A}^{\kappa_1}$. But we have 
$$\omega_A^{\kappa_1} = \bigotimes_{\sigma \in \Sigma} (\det \omega_{A,\sigma}^+)^{\kappa_\sigma} \otimes (\det \omega_{A,\sigma}^-)^{\lambda_\sigma}$$
where $\omega_{A,\sigma}^+$ is the sub-module of $\omega_A$ where $O_F$ acts on $\sigma^+$, and similarly for $\omega_{A,\sigma}^-$. We recall that $L = L_0 \oplus L_0^\bot$. The norm of the map $\omega_{A/L}^{\kappa_1} \to \omega_{A}^{\kappa_1}$ is exactly $p^A$, with 
$$A = - \sum_{\sigma \in \Sigma} \left( \kappa_\sigma \deg_\sigma L_0 + \lambda_\sigma \deg_\sigma L_0^\bot \right)$$
But recall that $\deg_\sigma L_0 = \max(a_\sigma - A_i,0) + l_\sigma$, and that 
$$\deg_\sigma L_0^\bot = A_i - a_\sigma + \deg_\sigma L_0 = \max(A_i-~a_\sigma,0) + l_\sigma$$ 
Thus
$$A = - \sum_{\sigma \in \Sigma} \left( \max(a_\sigma - A_i,0) \kappa_\sigma + \max(A_i-~a_\sigma,0) \lambda_\sigma + l_\sigma (\kappa_\sigma + \lambda_\sigma) \right) = - (N_{A_i} - n_{A_i}) + B$$
with
$$B= - \sum_{\sigma \in \Sigma} \left( l_\sigma (\kappa_\sigma + \lambda_\sigma) \right) \leq -\sum_{\sigma \in S_2} \left( l_\sigma (\kappa_\sigma + \lambda_\sigma) \right) \leq -\inf_{\sigma \in S_2} (\kappa_\sigma + \lambda_\sigma) \sum_{\sigma \in S_2} l_\sigma \leq -(1 - \alpha - f \varepsilon)  \inf_{\sigma \in S_2} (\kappa_\sigma + \lambda_\sigma)$$
Hence the bound for the operator $\alpha_i^{-1} U_i^{bad}$, noting that $1 - \alpha - f \varepsilon \geq 1 - \alpha - 2f \varepsilon$.
\end{proof}

Since we have made the assumption
$$v(\alpha_i) + n_{A_i}  < (1 - 2f\varepsilon) \inf_{\sigma \in S_2} (\kappa_\sigma + \lambda_\sigma)$$
we see that if $\alpha$ is small enough, the norm of the operator $\alpha_i^{-1} U_i^{bad}$ is strictly less than $1$.

\begin{rema}
Actually, we only needed the weaker hypothesis
$$v(\alpha_i) + n_{A_i}  < (1 - f\varepsilon) \inf_{\sigma \in S_2} (\kappa_\sigma + \lambda_\sigma)$$
but we used it to have the same condition on both linear and unitary cases.
\end{rema}

\subsubsection{Unitary case} 

We now turn to the unitary case. We first assume that $i < (a+b)/2$, so that the integer $t$ is equal to $1$. Let $M$ be a finite extension of $\mathbb{Q}_p$, let $x=(A,\lambda,\iota,\eta,H_\bullet)$ be a point of $\mathcal{U}$ defined over $O_M$, and $L$ be a totally isotropic $O_F$-stable subgroup of $A[\pi^2]$, such that we have $A[\pi] = H_i \oplus L[\pi]$. Let us denote $A' = A/L$, and let $H_{A_i}'$ be the image of $H_{A_i}$ in $A/L$. Thus
$$H_{A_i}' = (A[\pi]+L)/L$$
We suppose that the subgroup $L$ corresponds to a bad point. That is to say that $\deg H_{A_i}' \leq d_i - 1 + \alpha$ for a certain rational $\alpha >0$. We write $\deg_{\sigma_1} L = \max(a_\sigma-A_i,0) + l_{\sigma}$, and 
$$\deg_{\sigma_2} L = b_\sigma - a_\sigma + \deg_{\sigma_1} L = \max(b_\sigma - A_i, b_\sigma - a_\sigma ) + l_{\sigma}$$
for all $\sigma \in \Sigma$. As it is shown by the proposition $\ref{minorate_u}$, $l_{\sigma}$ is a positive rational for all $\sigma$. We then have $ \deg H_{A_i}' = 2fA_i - \deg (L/L[\pi])$, therefore 
$$\deg (L/L[\pi]) \geq 2fA_i - d_i +1-\alpha$$
We also have by the lemma $\ref{isotrop}$ the inequality $\deg (L/L[\pi]) \leq 2fA_i - \deg A[\pi] + \deg L[\pi]$. Thus
$$\deg L[\pi] \geq \deg A[\pi] - d_i + 1 - \alpha$$
We conclude that
$$\deg L \geq 2f A_i - 2 d_i + \deg A[\pi] + 2(1 - \alpha)$$
By an explicit computation, one checks that the equality 
$$2f A_i - 2 d_i + \deg A[\pi] = \sum_{\sigma \in \Sigma} (\max(a_\sigma-A_i,0) + \max(b_\sigma - A_i, b_\sigma - a_\sigma ))$$
We conclude that the condition of being a bad point gives
$$\sum_{\sigma \in \Sigma} l_\sigma \geq 1 - \alpha$$

Actually, we can control some of the $l_\sigma$. First, we prove the following technical lemma.

\begin{lemm} \label{technic_u}
Let $x=(A,\lambda,\iota,\eta,H_\bullet)$ be a point as before, and let $H \subset A[\pi]$ be an $O_F$-stable subgroup. If $H$ is of height $2fA_k$ and $\deg H \geq d_k - \varepsilon$, then $\deg_{\sigma_1} H \geq \min(a_{\sigma},A_k) - \varepsilon$ for all $\sigma \in \Sigma$. \\
If $H$ is of height $2f(a+b - A_k)$, and $\deg H \leq \deg A[\pi] - d_k + \varepsilon$, then $\deg_{\sigma_1} H \leq \max(a_\sigma-A_k,0) + \varepsilon$ for all $\sigma \in \Sigma$.
\end{lemm}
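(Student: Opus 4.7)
The plan is to adapt the argument of Lemma~\ref{technic_l} from the linear case, the only change being that each place $\sigma\in\Sigma_\pi$ now carries two partial degrees $\deg_{\sigma_1}$ and $\deg_{\sigma_2}$, with $\deg_{\sigma_1}A[\pi]=a_\sigma$ and $\deg_{\sigma_2}A[\pi]=b_\sigma$.

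For the first assertion, I would start from the two elementary bounds $\deg_{\sigma_1} H\leq \min(a_\sigma,A_k)$ and $\deg_{\sigma_2} H\leq \min(b_\sigma,A_k)=A_k$, where the latter equality uses $A_k\leq (a+b)/2\leq b_\sigma$. The first bound comes from $H\hookrightarrow A[\pi]$ together with additivity of partial degrees, while each inequality $\deg_{\sigma_j}H\leq A_k$ follows from the general fact that every partial degree of a group of height $2fh$ lies in $[0,h]$, applied here with $h=A_k$. Unwinding the definition $d_k=\sum_{j=1}^{2f}\min(\alpha_j,A_k)$ then gives $d_k=\sum_{\sigma\in\Sigma_\pi}(\min(a_\sigma,A_k)+A_k)$. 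If one had $\deg_{\sigma_1}H<\min(a_\sigma,A_k)-\varepsilon$ at some place, summing together the pointwise bounds for all the other partial degrees would force
$$\deg H=\sum_{\sigma'\in\Sigma_\pi}(\deg_{\sigma'_1}H+\deg_{\sigma'_2}H)<d_k-\varepsilon,$$
contradicting the hypothesis.

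For the second assertion, set $G:=A[\pi]/H$. It has height $2f(a+b)-2f(a+b-A_k)=2fA_k$, and by additivity of the degree $\deg G=\deg A[\pi]-\deg H\geq d_k-\varepsilon$. Applying the first assertion to $G$ yields $\deg_{\sigma_1}G\geq \min(a_\sigma,A_k)-\varepsilon$ for every $\sigma$, and subtracting from $\deg_{\sigma_1}A[\pi]=a_\sigma$ gives
$$\deg_{\sigma_1}H=a_\sigma-\deg_{\sigma_1}G\leq a_\sigma-\min(a_\sigma,A_k)+\varepsilon=\max(a_\sigma-A_k,0)+\varepsilon,$$
as required. There is no real obstacle here; the argument is formally identical to the linear case, the only minor point of care being the simplification $\min(b_\sigma,A_k)=A_k$ which is an immediate consequence of $A_k\leq (a+b)/2$.
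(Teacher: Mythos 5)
Your proof is correct and follows essentially the same route as the paper: the same pointwise bounds $\deg_{\sigma_1}H\leq\min(a_\sigma,A_k)$ and $\deg_{\sigma_2}H\leq A_k$, the same contradiction by summing over $\Sigma$, and the same reduction of the second part to the first via $A[\pi]/H$. Your extra remark that $\min(b_\sigma,A_k)=A_k$ (used implicitly in the paper) is a welcome clarification, but the argument is otherwise identical.
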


\begin{proof}
Suppose that $H$ is of height $2f A_k$ and $\deg H \geq d_k - \varepsilon$. If $\deg_{\sigma_1} H < \min(a_\sigma,A_k) - \varepsilon$ for some $\sigma \in \Sigma$, then
$$\deg H = \sum_{\sigma' \in \Sigma} (\deg_{\sigma_1'} H + \deg_{\sigma_2'} H) < \min(a_\sigma,A_k) - \varepsilon + A_k + \sum_{\sigma' \neq \sigma} (\min(a_{\sigma'},A_k) + A_k) = d_k - \varepsilon$$
and we get a contradiction. \\
If $H$ is of height $2f(a+b - A_k)$, and $\deg H \leq \deg A[\pi] - d_k + \varepsilon$, then we can apply the previous argument to $A[\pi] / H$. We get $\deg_{\sigma_1} (A[\pi] / H) \geq \min(a_{\sigma},A_k) - \varepsilon$ for all $\sigma \in \Sigma$, and therefore $\deg_{\sigma_1} H \leq \max(a_\sigma-A_k,0) + \varepsilon$.
\end{proof}

\noindent Now we prove a bound for some of the $l_\sigma$.

\begin{lemm}
If $\sigma \in S_1$, we have $l_\sigma \leq 2\varepsilon$.
\end{lemm}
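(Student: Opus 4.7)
The strategy is to mimic the linear-case proof while accounting for $L \subset A[\pi^2]$ (rather than $A[\pi]$), which forces us to filter through $0 \subset L[\pi] \subset L$ and combine estimates on each piece; the factor of $2$ in the bound will arise from separately controlling $\deg_{\sigma_1} L[\pi]$ and $\deg_{\sigma_1}(L/L[\pi])$. The case $\sigma \in \Sigma_0$ is immediate: $a_\sigma = 0$ forces $\deg_{\sigma_1} A[\pi] = 0$, hence $\deg_{\sigma_1} L[\pi] = 0$ and $\deg_{\sigma_1}(\pi L) = 0$ (as subgroups of $A[\pi]$), so $l_\sigma = \deg_{\sigma_1} L = 0$.

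For $\sigma \in \Sigma_k$ with $k < i$ and $\varepsilon_k = \varepsilon$, I first bound $\deg_{\sigma_1} L[\pi] \leq \varepsilon$. Because $H_{A_k} \subset H_i$ and $H_i \cap L[\pi] = 0$ in generic fiber, the morphism $H_{A_k} \to H_{A_k}' = (H_{A_k}+L)/L$ is a generic isomorphism, so $\deg H_{A_k}' \geq d_k - \varepsilon$; lemma $\ref{technic_u}$ then gives $\deg_{\sigma_1} H_{A_k}' \geq A_k - \varepsilon$, and using the inclusion $H_{A_k}' \subset H_i' = A[\pi]/L[\pi]$ together with $\deg_{\sigma_1} H_i' = a_\sigma - \deg_{\sigma_1} L[\pi] = A_k - \deg_{\sigma_1} L[\pi]$ yields the claim. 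For the second piece I use that multiplication by $\pi$ gives an integral isomorphism $L/L[\pi] \cong \pi L$, and since $\pi^2 L = 0$ one has $\pi L \subset L[\pi]$; hence $\deg_{\sigma_1}(L/L[\pi]) = \deg_{\sigma_1}(\pi L) \leq \deg_{\sigma_1} L[\pi] \leq \varepsilon$, so $l_\sigma = \deg_{\sigma_1} L \leq 2\varepsilon$.

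For $\sigma \in \Sigma_k$ with $k > i$ and $\varepsilon_k = \varepsilon$, the argument is dual. From $H_i \subset H_{A_k}$ and $H_i + L[\pi] = A[\pi]$ one gets $L[\pi] + H_{A_k} = A[\pi]$, and a height count forces $L[\pi]/(L[\pi] \cap H_{A_k}) \cong A[\pi]/H_{A_k}$ integrally. Lemma $\ref{technic_u}$ applies equally to $\sigma_2$ (its proof uses only $\min(b_\sigma, A_k) = A_k$), so $\deg_{\sigma_j} H_{A_k} \geq A_k - \varepsilon$ for $j = 1,2$; combined with the rank bound $\deg_{\sigma_j}(L[\pi] \cap H_{A_k}) \leq A_k - A_i$ (the intersection has height $2f(A_k - A_i)$) this yields $\deg_{\sigma_1} L[\pi] \leq (A_k - A_i) + \varepsilon$ and $\deg_{\sigma_2} L[\pi] \leq (a+b - A_i - A_k) + \varepsilon$. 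Feeding the $\sigma_2$-estimate into the orthogonality formula $\deg_{\sigma_1}(\pi L) = a_\sigma - (a+b-i) + \deg_{\sigma_2} L[\pi]$ gives, after the cancellation $a_\sigma + b_\sigma = a+b$, that $\deg_{\sigma_1}(L/L[\pi]) = \deg_{\sigma_1}(\pi L) \leq \varepsilon$; summing yields $\deg_{\sigma_1} L \leq (A_k - A_i) + 2\varepsilon$, whence $l_\sigma \leq 2\varepsilon$.

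The main obstacle is the case $k > i$: isotropy of $L$ must be invoked twice, first to identify $L/L[\pi]$ with $L[\pi]^\bot = \pi L$ and second via the partial-degree duality $\deg_{\sigma_1} H^\bot = a_\sigma - h + \deg_{\sigma_2} H$, and only the combined use of both the $\sigma_1$- and $\sigma_2$-versions of lemma $\ref{technic_u}$ produces a sharp enough bound. One must also be careful that each partial-degree comparison is either a closed immersion or an integral isomorphism (by height matching, as for $L[\pi]/(L[\pi] \cap H_{A_k}) \to A[\pi]/H_{A_k}$), since partial degrees do not in general increase under generic isomorphisms.
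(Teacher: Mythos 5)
Your treatment of $\sigma \in \Sigma_0$ and of the case $k < i$ is correct, and the observation that $\pi L \subset L[\pi]$ (since $\pi^2 L = 0$ and $\pi L \subset L$) gives the bound $\deg_{\sigma_1}(L/L[\pi]) = \deg_{\sigma_1}(\pi L) \leq \deg_{\sigma_1} L[\pi] \leq \varepsilon$ in one stroke. This is genuinely cleaner than the paper's own argument for $k<i$, which re-derives the second bound by a further reduction $H_{A_k}'' := (H_{A_k}' + L/L[\pi])/(L/L[\pi])$ and a second application of Lemma~\ref{technic_u}; your inclusion-of-subgroups argument sidesteps that entirely.

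The case $k>i$, however, has a genuine gap. You claim that ``a height count forces $L[\pi]/(L[\pi]\cap H_{A_k}) \cong A[\pi]/H_{A_k}$ integrally,'' and you then read off both $\deg_{\sigma_1} L_k' \leq \varepsilon$ and $\deg_{\sigma_2} L_k' \leq b_\sigma - A_k + \varepsilon$ from this isomorphism. That claim is false: $L_k' \to A[\pi]/H_{A_k}$ is only an isomorphism in the generic fibre, and two finite flat group schemes of equal height linked by a generic isomorphism need \emph{not} be isomorphic over $O_M$ --- that is precisely the phenomenon the degree function is designed to detect ($\deg$ strictly increases when the map fails to be an isomorphism). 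So the height count buys you nothing integrally, and since, as you yourself note in your last paragraph, partial degrees do not in general increase along generic isomorphisms, you have no right to the bounds on $\deg_{\sigma_1} L_k'$ and $\deg_{\sigma_2} L_k'$ as written. What you actually have is only the total-degree inequality $\deg L_k' \leq \deg(A[\pi]/H_{A_k}) \leq \deg A[\pi] - d_k + \varepsilon$, and to convert that into a bound on an individual partial degree one must invoke the conservation-of-mass argument of Lemma~\ref{technic_u}, supplying the a priori lower bounds $\deg_{\sigma_j'} L_k' \geq \max(\alpha_j' - A_k, 0)$ at the \emph{other} places (these follow from the minoration $\deg_{\sigma_j'} L[\pi] \geq \max(\alpha_j' - A_i,0)$ together with the height bound $\deg_{\sigma_j'}(L[\pi]\cap H_{A_k}) \leq A_k - A_i$). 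This is essentially the route the paper takes; it then deals with $L/L[\pi]$ by passing to $A/L[\pi]$ and applying the technical lemma once more, rather than via the orthogonality formula you propose. Your orthogonality route could in principle be salvaged once $\deg_{\sigma_2} L_k'$ is correctly bounded by the technical lemma rather than by the spurious integral isomorphism, but as written the $k>i$ case does not go through.
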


\begin{proof}
If $\sigma \in \Sigma_0$, then $l_\sigma=0$. If not, $\sigma \in \Sigma_k$, with $1 \leq k \leq s$, $k \neq i$ and $\varepsilon_k = \varepsilon$. Since $x$ is a point of $\mathcal{U}$, we have $\deg H_{A_k} \geq d_k - \varepsilon$. Suppose first that $k < i$. Since $H_{A_k}$ and $L[\pi]$ are disjoint, the morphism $H_{A_k} \to H_{A_k}':=(H_{A_k} + L[\pi])/ L[\pi]$ is an isomorphism in the generic fiber. Thus by the properties of the degree function $\deg H_{A_k}' \geq \deg H_{A_k} \geq d_k - \varepsilon$. By the lemma $\ref{technic_u}$, we have $\deg_{\sigma_1} H_{A_k}' \geq A_k - \varepsilon$. But we also have $\deg_{\sigma_1} H_{A_k}' \leq \deg_{\sigma_1} A[\pi] / L[\pi] = a_\sigma - \deg_{\sigma_1} L[\pi]$, and therefore $\deg_{\sigma_1} L[\pi] \leq \varepsilon$. Next, we study $H_{A_k}'':= (H_{A_k}' + L/L[\pi]) / (L/L[\pi])$. Using the lemma $\ref{technic_u}$, we get
$$A_k - \varepsilon \leq \deg_{\sigma_1} H_{A_k''} \leq A_k - \deg_{\sigma_1} L/L[\pi]$$
Therefore $\deg_{\sigma_1} (L/L[\pi]) \leq \varepsilon$. In conclusion, we get $l_\sigma \leq 2 \varepsilon$. \\
The case $k > i$ cannot be treated by duality in this case. Let us denote the group $L[\pi] / (L[\pi] \cap H_{A_k})$ by $L_k'$. Since $H_{A_k}$ and $L[\pi]$ generate $A[\pi]$, the morphism $ L_k' \to A[\pi] / H_{A_k}$ is an isomorphism in generic fiber, so we get $\deg L_k' \leq \deg (A[\pi] / H_{A_k})$. But we have
$$\deg (A[\pi] / H_{A_k} ) = \deg A[\pi] - \deg H_{A_k} \leq \deg A[\pi] - d_k + \epsilon$$
The lemma $\ref{technic_u}$ gives $\deg_{\sigma_1} L_k' \leq \varepsilon$. Since $L[\pi] \cap H_{A_k}$ is of height $2f(A_k-A_i)$, we have
$$\deg_{\sigma_1} L[\pi] = \deg_{\sigma_1} L_k' + \deg_{\sigma_1} L[\pi] \cap H_{A_k} \leq \varepsilon + A_k - A_i$$
Now we study $A' := A/L[\pi]$ and set $H_{A_k}' := (\pi^{-1} (H_{A_k} \cap L[\pi]))/L[\pi]$. If $H_{A_k}''$ is the image of the morphism $H_{A_k}' \to A'[\pi] / (L/L[\pi])$, then we get $\deg H_{A_k}'' \geq d_k - \varepsilon$. The lemma $\ref{technic_u}$ gives
$$A_k - \varepsilon \leq \deg_{\sigma_1} H_{A_k}'' \leq A_k - \deg_{\sigma_1} (L/L[\pi])$$
We deduce that $\deg_{\sigma_1} L/L[\pi] \leq \varepsilon$. Finally, we have
$$l_\sigma = \deg_{\sigma_1} L - (A_k-A_i) \leq 2 \varepsilon$$
\end{proof}

\begin{rema}
Actually, we can have more control on the $l_\sigma$. Indeed, suppose that there exists $\sigma \in \Sigma_k \cap S_1$.
If $k<i$, then we have $l_\sigma \leq \varepsilon$ for all $\sigma \in \Sigma_j$ with $j \leq k$. If $k>i$, then we have $l_\sigma \leq \varepsilon$ for all $\sigma \in \Sigma_j$ with $j \geq k$.
\end{rema}

Putting together all the calculations made, we get the following result.

\begin{prop}
We have
$$\sum_{\sigma \in S_2} l_\sigma \geq 1 - \alpha - 2f \varepsilon$$
\end{prop}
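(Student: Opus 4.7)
The plan is to mimic the proof of the analogous proposition in the linear case, which proceeds by a simple additivity argument: one combines the global lower bound on $\sum_{\sigma \in \Sigma} l_\sigma$ coming from the definition of a bad point, with the pointwise upper bounds on $l_\sigma$ for $\sigma \in S_1$ established in the preceding lemma.

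More precisely, I would first recall that, as shown in the discussion preceding the proposition, the condition that $L$ corresponds to a bad point (i.e. $\deg H_{A_i}' \leq d_i - 1 + \alpha$) translates, via the identities $\deg H_{A_i}' = 2fA_i - \deg(L/L[\pi])$ and $\deg L \geq 2fA_i - 2d_i + \deg A[\pi] + 2(1-\alpha)$, into the global inequality
$$\sum_{\sigma \in \Sigma} l_\sigma \geq 1 - \alpha.$$
Next, using the previous lemma, which asserts $l_\sigma \leq 2\varepsilon$ for each $\sigma \in S_1$, and the fact that $|S_1| \leq |\Sigma| = f$, I would bound the $S_1$-contribution by
$$\sum_{\sigma \in S_1} l_\sigma \leq 2f \varepsilon.$$

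Combining these two inequalities by subtracting the second from the first gives
$$\sum_{\sigma \in S_2} l_\sigma = \sum_{\sigma \in \Sigma} l_\sigma - \sum_{\sigma \in S_1} l_\sigma \geq (1-\alpha) - 2f\varepsilon,$$
which is the claim. There is essentially no obstacle: the substantive work has already been done in the preceding lemma (the technical bound on $l_\sigma$ for $\sigma \in S_1$, requiring the delicate study of successive quotients by $L[\pi]$ and $L/L[\pi]$ to handle the unitary setting), and the proposition itself is just the bookkeeping step.
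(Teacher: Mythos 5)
Your proof is correct and is essentially identical to the paper's own argument: both subtract the $S_1$-contribution, bounded via the preceding lemma ($l_\sigma \leq 2\varepsilon$, $|S_1| \leq f$), from the global bound $\sum_{\sigma \in \Sigma} l_\sigma \geq 1 - \alpha$ coming from the bad-point condition. You also correctly note that the proposition is mere bookkeeping once the technical lemma is in hand.
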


\begin{proof}
If $\sigma \in S_1$, we have $l_\sigma \leq 2\varepsilon$, and we also have $\sum_{\sigma \in \Sigma} l_\sigma \geq 1 - \alpha$. Thus
$$\sum_{\sigma \in S_2} l_\sigma \geq 1 - \alpha - \sum_{\sigma \in S_1} l_\sigma \geq 1 - \alpha - 2f \varepsilon$$
\end{proof}

We can now prove the bound for the norm of the operator $U_i^{bad}$.

\begin{prop} \label{bound}
We have 
$$|| \alpha_i^{-1} U_i^{bad} || \leq p^{ v(\alpha_i) + n_{A_i} - (1 - \alpha - 2f \varepsilon) \inf_{\sigma \in S_2} (\kappa_\sigma + \lambda_\sigma)    }$$
\end{prop}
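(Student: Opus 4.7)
The plan is to transpose the linear-case argument step by step, with $L_0$ and $L_0^\bot$ replaced by the totally isotropic subgroup $L \subset A[\pi^2]$, and with the partial degrees $\deg_{\sigma_1}L$ and $\deg_{\sigma_2}L$ playing the roles of $\deg_\sigma L_0$ and $\deg_\sigma L_0^\bot$. Concretely, $\Vert \alpha_i^{-1} U_i^{bad}\Vert$ splits as $p^{v(\alpha_i)}$ (the norm of the scalar $\alpha_i^{-1}$) times the norm of the geometric operator $p^{-N_{A_i}}\,\mathrm{Tr}_{p_1}\circ p^*(\kappa)\circ p_2^*$. The trace has norm $\leq 1$ (the projection $p_1$ is finite étale in generic fibre) and $p_2^*$ is isometric, so everything reduces to estimating $\Vert p^*(\kappa)\Vert$ and combining it with the normalization $p^{N_{A_i}}$ (keeping in mind that $|p^{-N_{A_i}}| = p^{N_{A_i}}$).

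To handle a general weight I would write $\kappa = \kappa_1 + \kappa_2$, where $\kappa_1$ carries the uniform per-$\sigma$ weights $(\kappa_\sigma,\dots,\kappa_\sigma)$ on the $\sigma_1$-part and $(\lambda_\sigma,\dots,\lambda_\sigma)$ on the $\sigma_2$-part, and $\kappa_2$ has non-negative coefficients. Then $\Vert p^*(\kappa_2)\Vert \leq 1$, and the line bundle $\omega^{\kappa_1} = \bigotimes_\sigma (\det \omega_{A,\sigma_1})^{\kappa_\sigma} \otimes (\det \omega_{A,\sigma_2})^{\lambda_\sigma}$ allows one to compute $\Vert p^*(\kappa_1)\Vert = p^A$ with
$$A = -\sum_{\sigma \in \Sigma} \left( \kappa_\sigma \deg_{\sigma_1} L + \lambda_\sigma \deg_{\sigma_2} L \right).$$

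Substituting $\deg_{\sigma_1}L = \max(a_\sigma - A_i, 0) + l_\sigma$ and $\deg_{\sigma_2}L = \max(b_\sigma - A_i, b_\sigma - a_\sigma) + l_\sigma$, one recognises on the right-hand side precisely the weight-dependent part of $N_{A_i}$: writing $A = -(N_{A_i} - n_{A_i}) + B$ with $B = -\sum_\sigma l_\sigma(\kappa_\sigma + \lambda_\sigma)$, the factor $p^{N_{A_i}}$ cancels the first piece, leaving $p^{v(\alpha_i) + n_{A_i} + B}$. Finally, discarding the non-negative contribution of $S_1$ and invoking the previous proposition gives $B \leq -(1 - \alpha - 2f\varepsilon)\inf_{\sigma \in S_2}(\kappa_\sigma + \lambda_\sigma)$, which yields exactly the announced bound.

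The only delicate point is the book-keeping in the middle step: one must verify that $\max(a_\sigma - A_i, 0)\kappa_\sigma + \max(b_\sigma - A_i, b_\sigma - a_\sigma)\lambda_\sigma$ is the correct weight-dependent contribution to $N_{A_i}$ as defined in the unitary case. The asymmetric term $\max(b_\sigma - A_i, b_\sigma - a_\sigma)$, replacing the $\max(A_i - a_\sigma, 0) + l_\sigma$ of the linear setting, arises precisely because the total isotropy of $L$ forces the rigid relation $\deg_{\sigma_2}L - \deg_{\sigma_1}L = b_\sigma - a_\sigma$. Once this matching is verified term-by-term, the proof is essentially mechanical.
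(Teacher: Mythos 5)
Your proof is correct and follows exactly the route the paper takes: the paper's own proof of this proposition is the single line ``the proof is exactly the same as in the linear case,'' and you have carried out the transposition to the unitary setting faithfully, substituting $\deg_{\sigma_1}L$, $\deg_{\sigma_2}L$ for $\deg_\sigma L_0$, $\deg_\sigma L_0^\bot$, using the isotropy relation $\deg_{\sigma_2}L - \deg_{\sigma_1}L = b_\sigma - a_\sigma$ to match the $\max(b_\sigma - A_i, b_\sigma - a_\sigma)\lambda_\sigma$ term in $N_{A_i}$, and invoking the unitary version of the lower bound $\sum_{\sigma \in S_2} l_\sigma \geq 1 - \alpha - 2f\varepsilon$. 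No gap.
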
 

\begin{proof}
The proof is exactly the same as in the linear case.
\end{proof}

Since we have made the assumption
$$v(\alpha_i) + n_{A_i}  < (1 - 2f\varepsilon) \inf_{\sigma \in S_2} (\kappa_\sigma + \lambda_\sigma)$$
we see that if $\alpha$ is small enough, the norm of the operator $\alpha_i^{-1} U_i^{bad}$ is strictly less than $1$. \\

We now deal with the case $i=(a+b)/2$ : the integer $t$ is now equal to $2$. The subgroup $L$ is now a maximal totally isotropic subgroup of $A[\pi]$. We write $\deg_{\sigma_1} L = l_\sigma$, and we have $\deg_{\sigma_2} L = (b_\sigma - a_\sigma)/2 + l_\sigma$, for all $\sigma \in \Sigma$. Recall that we have the relation
$$\deg H_{(a+b)/2} ' = f(a+b) - \deg L = \sum_{\sigma \in \Sigma} (a_\sigma + \frac{(a+b)}{2} - 2l_\sigma  )$$
so that 
$$d_{(a+b)/2} - \deg H_{(a+b)/2} ' = 2 \sum_{\sigma \in \Sigma} l_\sigma$$
The condition of being a bad point gives
$$\sum_{\sigma \in \Sigma} l_\sigma \geq 1 - \alpha$$
for a certain $\alpha >0$. The calculations are now exactly the same as previously, and the bound obtained in the proposition $\ref{bound}$ is still valid in the case $i=(a+b)/2$.

\subsubsection{Conclusion}

We now recall the analytic continuation result we got, putting together all the results of the past sections. \\
Recall that we have defined subsets
$$\mathcal{U}_0 := Deg^{-1} ([d_1-\varepsilon_1,d_1] \times \dots \times [d_s-\varepsilon_s,d_s]) = \bigcap_{k=1}^s Deg_k^{-1}  [d_k-\varepsilon_k,d_k]$$
$$\mathcal{U}_1 := \bigcap_{k \neq i} Deg_k^{-1}  [d_k-\varepsilon_k,d_k]$$ 
We also have defined a partition $\Sigma = S_1 \coprod S_2$. The result of this section is thus contained in the following theorem.

\begin{theo}
Let $\kappa$ be a weight, and $f$ a section of $\omega^\kappa$ on $\mathcal{U}_0$. Suppose that $f$ is an eigenform for the Hecke operators $U_{i}$, with eigenvalue $\alpha_i$ for all $1 \leq i \leq s$, and that we have the relations
$$n_{A_i} + v(\alpha_i) < (1-2f\varepsilon) \inf_{\sigma \in S_2} (\kappa_\sigma + \lambda_\sigma)$$
Then $f$ can be extended to a section of $\omega^\kappa$ on $\mathcal{U}_1$.
\end{theo}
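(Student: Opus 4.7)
The plan is to combine the two extension mechanisms built up in the section, keyed to the fixed index $i$: one handles the region where $Deg_i$ is close to its maximum $d_i$, the other handles the region where $Deg_i$ is far from $d_i$. After the two pieces are produced, one verifies that they agree on overlap and extend the original $f$.

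First, I would extend $f$ from $\mathcal{U}_0$ to $\mathcal{V}_1 := Deg_i^{-1}(\,]d_i - t,\,d_i]\,) \cap \mathcal{U}_1$. The key input is the proposition stating that on $Deg_i^{-1}([d_i - t\gamma,d_i]) \cap \mathcal{U}_1$ the operator $U_i$ strictly increases $Deg_i$; by the quasi-compactness argument of Pilloni (proposition 2.5 of \cite{Pi}) this means iterating $U_i$ finitely many times lands inside $\mathcal{U}_0$. Since $f$ is a $U_i$-eigenform with eigenvalue $\alpha_i$, the formula $\alpha_i^{-N} U_i^N f$ defines the required extension on $Deg_i^{-1}([d_i - t\gamma,d_i]) \cap \mathcal{U}_1$, and letting $\gamma \to 1^{-}$ one obtains the section on $\mathcal{V}_1$.

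The second, harder step is to produce a section on $\mathcal{U} := Deg_i^{-1}([0,d_i - t(1-\alpha)]) \cap \mathcal{U}_1$ for $\alpha > 0$ chosen sufficiently small. Here I would invoke the decomposition theorem \ref{decompo}: iterating $U_i^N$ on $\mathcal{U}$ splits into operators $U_{i,\bullet}^{good}$ whose image lies in $Deg_i^{-1}(\,]d_i - t(1-\beta),d_i]\,)$ (where the form has already been defined in step one) and chains of operators $U_{i,\bullet}^{bad}$ whose image stays in the bad region. Using Kassaei-style glueing (proposition 4.5.6 and 4.5.7 of \cite{Bi1}), one constructs sections $f_N \in H^0(\mathcal{U}, \omega^\kappa/p^{A_N})$ that glue together into an honest section of $\omega^\kappa$, provided the constants $A_N$ tend to infinity. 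This in turn reduces to the norm estimate $\|\alpha_i^{-1} U_i^{bad}\| < 1$.

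The main obstacle is therefore establishing that norm bound, and this is where the hypothesis $n_{A_i} + v(\alpha_i) < (1-2f\varepsilon) \inf_{\sigma \in S_2}(\kappa_\sigma + \lambda_\sigma)$ enters. The strategy is to take a bad subgroup $L$ occurring in $U_i$, write $\deg_\sigma L_0 = \max(a_\sigma-A_i,0) + l_\sigma$ (or the analogous decomposition in the unitary case), and exploit two facts: the badness condition forces $\sum_\sigma l_\sigma \geq 1-\alpha$, while the fact that we stay inside $\mathcal{U}_1$ forces $l_\sigma \leq c\varepsilon$ for $\sigma \in S_1$ (here $c = 1$ in the linear case and $c = 2$ in the unitary case, and this is the content of the technical lemmas \ref{technic_l} and \ref{technic_u}). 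Subtracting yields $\sum_{\sigma \in S_2} l_\sigma \geq 1 - \alpha - 2f\varepsilon$. Combined with a direct computation of the norm of $p^\ast(\kappa) : p_2^\ast \omega^\kappa \to p_1^\ast \omega^\kappa$ in terms of the partial degrees of $L$ (which produces precisely the normalization factor $N_{A_i}$ up to an extra factor of $p^{-\sum_{\sigma \in S_2} l_\sigma (\kappa_\sigma + \lambda_\sigma)}$), one obtains the bound in proposition \ref{bound}, and the hypothesis is exactly what makes the exponent negative for $\alpha$ small enough. Finally, the pieces from steps one and two overlap on $\mathcal{V}_1 \cap \mathcal{U}$, and uniqueness of the extension by analytic continuation forces them to glue into a single section on $\mathcal{U}_1$; the case $i = (a+b)/2$ in the unitary situation is handled identically after replacing $t = 1$ by $t = 2$ in all occurrences.
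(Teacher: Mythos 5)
Your proposal matches the paper's own proof essentially step for step: the two-stage extension (first to $Deg_i^{-1}(]d_i-t,d_i])\cap\mathcal{U}_1$ by iterating $\alpha_i^{-1}U_i$ and using quasi-compactness, then to the remaining region via the decomposition of Theorem \ref{decompo} and Kassaei-style gluing), with the crucial norm bound $\|\alpha_i^{-1}U_i^{bad}\|<1$ obtained by writing $\deg_\sigma L_0$ in terms of the correction terms $l_\sigma$, bounding $l_\sigma$ by a multiple of $\varepsilon$ on $S_1$, and combining with the computation of the norm of $p^*(\kappa)$. This is exactly the argument the paper gives.
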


\subsection{The classicality theorem}

We recall that we have made the assumption that $p$ is inert in $F_0$. We have defined integers $(A_i)_{1 \leq i \leq s}$, and to each of these integers corresponds a canonical subgroup, and a relevant Hecke operator. We also have a partition
$$\Sigma = \Sigma_0 \coprod_{j=1}^s \Sigma_j \coprod \Sigma_{s+1}$$
We can now state the classicality result.

\begin{theo} \label{final_theo}
Let $f$ be an overconvergent modular form of weight $\kappa$. Suppose that $f$ is an eigenform for the Hecke operators $U_{\pi,A_i}$, with eigenvalue $\alpha_i$, and that we have the relations
$$n_{A_i} + v(\alpha_i) < \inf_{\sigma \in \Sigma_i} (\kappa_\sigma + \lambda_\sigma)$$
for $1 \leq i \leq s$. Then $f$ is classical.
\end{theo}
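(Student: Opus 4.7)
The plan is to iterate the analytic continuation theorem of the previous subsection a total of $s$ times, extending $f$ across one degree function at each step, in a carefully chosen order that makes its hypotheses reduce to the ones we are given.

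First, by shrinking the neighborhood of definition of $f$ if necessary, I may assume that $f$ is defined on $\mathcal{U}^{(0)} := \bigcap_{k=1}^s Deg_k^{-1}[d_k-\varepsilon,d_k]$ for some rational $\varepsilon>0$ that is small enough that
$$n_{A_i} + v(\alpha_i) < (1-2f\varepsilon) \inf_{\sigma \in \Sigma_i}(\kappa_\sigma+\lambda_\sigma) \quad \text{for every } 1 \le i \le s.$$
This is possible because the hypotheses of the theorem are strict inequalities.

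The key combinatorial idea is to reorder the indices $\{1,\dots,s\}$ as $i_1,\dots,i_s$ so that the sequence $\inf_{\sigma \in \Sigma_{i_j}}(\kappa_\sigma+\lambda_\sigma)$ is non-increasing in $j$. I then proceed by induction on $j \in \{0,\dots,s\}$, showing that $f$ extends to
$$\mathcal{U}^{(j)} := \bigcap_{k \notin \{i_1,\dots,i_j\}} Deg_k^{-1}[d_k-\varepsilon,d_k].$$
To pass from $\mathcal{U}^{(j-1)}$ to $\mathcal{U}^{(j)}$, I apply the analytic continuation theorem with distinguished index $i_j$, setting $\varepsilon_k = d_k$ for $k \in \{i_1,\dots,i_{j-1}\}$ and $\varepsilon_k = \varepsilon$ otherwise. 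The associated set of the analytic continuation theorem is then $S_2 = \Sigma_{i_1} \cup \cdots \cup \Sigma_{i_j}$, and by the chosen ordering
$$\inf_{\sigma \in S_2}(\kappa_\sigma+\lambda_\sigma) = \inf_{\sigma \in \Sigma_{i_j}}(\kappa_\sigma+\lambda_\sigma).$$
Thus the required bound $n_{A_{i_j}} + v(\alpha_{i_j}) < (1-2f\varepsilon) \inf_{\sigma \in S_2}(\kappa_\sigma+\lambda_\sigma)$ for the analytic continuation reduces precisely to the assumed inequality for index $i_j$, and the induction step goes through.

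After $j=s$ iterations $f$ is a section of $\omega^\kappa$ over all of $X_{Iw,rig}$. To conclude classicality, hypothesis \ref{notcurve} guarantees that the boundary of a toroidal compactification of $X_{Iw}$ has codimension at least two, so the Koecher principle implies that any such global rigid-analytic section extends to a classical modular form on $X_{Iw}$. The only genuinely substantive step is the choice of ordering of the indices together with the resulting control on how $S_2$ grows along the induction; once this is in place, the remainder of the argument is bookkeeping on top of the analytic continuation theorem and standard Köcher-type extension results.
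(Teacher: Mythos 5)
Your proposal is correct and follows essentially the same route as the paper: order the indices so that $K_{i_1}\geq\dots\geq K_{i_s}$ where $K_i=\inf_{\sigma\in\Sigma_i}(\kappa_\sigma+\lambda_\sigma)$, apply the analytic continuation theorem with the same choice of $\varepsilon_k$ (equal to $d_k$ for already-treated indices, $\varepsilon$ otherwise), observe that $S_2=\Sigma_{i_1}\cup\dots\cup\Sigma_{i_j}$ so that $\inf_{\sigma\in S_2}(\kappa_\sigma+\lambda_\sigma)=K_{i_j}$, and finish with Koecher's principle and GAGA. The only nitpick is that the codimension-$\geq 2$ condition should be stated for the boundary of the \emph{minimal} compactification $X_{Iw}^*$ rather than a toroidal one, but this does not affect the argument.
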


Before giving the proof of the theorem, let us make the conditions explicit. In the case $L$, the conditions become
$$v(\alpha_i) + \sum_{j=1}^f \min(a_j,A_i) \min (b_j,B_i) < \inf_{\sigma \in \Sigma_i} (\kappa_\sigma + \lambda_\sigma)$$
In the special case where all the $(a_i)$ are distinct and different from $0$ and $a+b$, we have $s=d$ conditions, which can be written
$$v(\alpha_\sigma) + \sum_{\sigma' \in \Sigma} \min(a_\sigma,a_{\sigma'}) \min (b_\sigma,b_{\sigma'}) < \kappa_{\sigma} + \lambda_{\sigma}$$
where $\alpha_{\sigma}$ is the eigenvalue of $U_{\pi,a_\sigma}$. \\
In the case $U$, they become
$$v(\alpha_i) + \sum_{j=1}^f (a+b)\min(a_j,A_i) < \inf_{\sigma \in \Sigma_i} (\kappa_\sigma + \lambda_\sigma)$$
if $A_i < (a+b)/2$, and
$$v(\alpha_i) + \sum_{j=1}^f \frac{(a+b)}{2} a_j < \inf_{\sigma \in \Sigma_i} (\kappa_\sigma + \lambda_\sigma)$$
if $A_i = (a+b)/2$. \\
In the special case where all the $(a_i)$ are distinct and different from $0$ and $(a+b)/2$, we have $s=d$ conditions, which can be written
$$v(\alpha_\sigma) + \sum_{\sigma' \in \Sigma} (a+b)\min(a_\sigma,a_{\sigma'}) < \kappa_{\sigma} + \lambda_{\sigma}$$
where $\alpha_{\sigma}$ is the eigenvalue of $U_{\pi,a_\sigma}$. \\
$ $\\
Of course, in the case where the ordinary locus in non empty, we find the same conditions as in \cite{Bi1}. In the case $L$, the condition of ordinariness is $a_{\sigma}=a$, $b_\sigma=b$ for some couple $(a,b)$ and for all $\sigma \in \Sigma$. There is one relevant Hecke operator, $U_{\pi,a}$, and the classicality condition is
$$fab + v(\alpha) < \inf_{\sigma \in \Sigma}  (\kappa_\sigma + \lambda_\sigma)$$
where $\alpha$ is the eigenvalue of $U_{\pi,a}$. \\
In the case $U$, the condition of ordinariness is $$a_{\sigma} = b_{\sigma} = (a+b)/2$$ for all $\sigma \in \Sigma$. There is one relevant Hecke operator, $U_{\pi,(a+b)/2}$, and the condition is 
$$f\frac{(a+b)^2}{4} + v(\alpha) < \inf_{\sigma \in \Sigma}  (\kappa_\sigma + \lambda_\sigma)$$
where $\alpha$ is the eigenvalue of $U_{\pi,(a+b)/2}$. 

\begin{rema}
Since we need to use all the Hecke operators $U_{\pi,A_i}$ for the classicality result, maybe the relevant operator is a product of these ones. For example, in the linear case the operator $\prod_\sigma U_{\pi,a_\sigma}$ parametrizes complements of (a lifting of) the kernel of the $f^{th}$-power of the Frobenius on the $\mu$-ordinary locus. 
\end{rema}

\subsubsection{Proof of the theorem}

Let us now prove the theorem. The overconvergent form $f$ is a section of the sheaf $\omega^\kappa$ defined over
$$\mathcal{V}_0 := \bigcap_{i=1}^s Deg_i^{-1} ([d_i-\varepsilon, d_i])$$
for some $\varepsilon >0$. Of course, $\varepsilon$ can be taken as small as we want.
Let us note $K_i = \inf_{\sigma \in \Sigma_i} (\kappa_\sigma + \lambda_\sigma)$, for all $1 \leq i \leq s$. We order the elements $(K_i)_{1 \leq i \leq s}$ by decreasing order : we have
$$K_{i_1} \geq K_{i_2} \geq \dots \geq K_{i_s}$$
We will use the analytic continuation theorem successively for the operators $U_{i_1}, \dots, U_{i_s}$, in that order. \\
We first consider the operator $U_{i_1}$ (we recall that we noted $U_i := U_{\pi,A_i}$). We take all the rationals $\varepsilon_k$ to be equal to $\varepsilon$. In that case, $S_2 = \Sigma_{i_1}$. We can apply the analytic continuation theorem if the condition 
$$n_{A_{i_1}} + v(\alpha_{i_1}) < (1-2f\varepsilon) \inf_{\sigma \in S_2} (\kappa_\sigma + \lambda_{\sigma})$$
is fulfilled. But $\inf_{\sigma \in S_2} (\kappa_\sigma + \lambda_{\sigma}) = K_{i_1}$, and we have by hypothesis
$$n_{A_{i_1}} + v(\alpha_{i_1}) < K_{i_1}$$
If $\varepsilon$ is small enough, then we can apply the theorem. We can thus extend $f$ to 
$$\mathcal{V}_1 := \bigcap_{i \neq i_1} Deg_i^{-1} ([d_i-\varepsilon, d_i])$$
We then use the operator $U_{i_2}$. In this case, we take all the rationals $\varepsilon_k$ to be equal to $\varepsilon$, except $\varepsilon_{i_1} = d_{i_1}$. In that case, $S_2 = \Sigma_{i_1} \cup \Sigma_{i_2}$. We can apply the analytic continuation theorem if the condition 
$$n_{A_{i_2}} + v(\alpha_{i_2}) < (1-2f\varepsilon) \inf_{\sigma \in S_2} (\kappa_\sigma + \lambda_{\sigma})$$
is fulfilled. But $\inf_{\sigma \in S_2} (\kappa_\sigma + \lambda_{\sigma}) = \inf (K_{i_1},K_{i_2}) = K_{i_2}$, and we have by hypothesis
$$n_{A_{i_2}} + v(\alpha_{i_2}) < K_{i_2}$$
If $\varepsilon$ is small enough, then we can apply the theorem. We extend $f$ to 
$$\mathcal{V}_2 := \bigcap_{i \notin \{ i_1,i_2 \}} Deg_i^{-1} ([d_i-\varepsilon, d_i])$$
Repeating this argument, we can extend the overconvergent form $f$ to the whole rigid variety $X_{Iw,rig}$.
We conclude by applying a Koecher's principle, and a GAGA theorem, which proves that the space
$$H^0(X_{Iw,rig},\omega^\kappa)$$
consists of classical modular forms. This will be done in the next section.

\subsubsection{Compactifications and Koecher's principle} \label{compact}

To complete the proof of the theorem, we need to prove a Koecher's principle, and to introduce compactifications of the Shimura variety.

\begin{prop}
There exists a toro\"idal compactification $\overline{X}_{Iw}$ of $X_{Iw}$ defined over $O_K$. It is a proper scheme, and the sheaf $\omega^\kappa$ extends to $\overline{X}_{Iw}$. 
\end{prop}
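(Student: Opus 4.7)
The plan is to reduce to the existing theory of toroidal compactifications of PEL Shimura varieties. First I would invoke Lan's construction (\cite{La}, generalizing Faltings--Chai) to obtain a toroidal compactification $\overline{X}$ of $X$ over $O_K$, associated to a choice of smooth projective cone decomposition. This $\overline{X}$ is a proper scheme over $O_K$, carries a semi-abelian scheme $\mathcal{A} \to \overline{X}$ extending the universal abelian scheme on $X$, and is equipped with extensions of the $O_F$-action, of the polarization (away from the boundary) and of the prime-to-$p$ level structure. The existence of such a compactification for unitary PEL data of the type considered here is precisely the content of Lan's main theorem and requires no new ideas.

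Second, I would extend $\omega^\kappa$ to $\overline{X}$. Since $\mathcal{A}$ is semi-abelian, $\omega_{\mathcal{A}} := e^* \Omega^1_{\mathcal{A}/\overline{X}}$ is a locally free $\mathcal{O}_{\overline{X}}$-module of the same rank as $\omega_A$ on $X$, and it inherits the $O_F$-action. The determinant condition, being closed, extends to $\overline{X}$, so $\omega_{\mathcal{A}}$ is again locally isomorphic to $St \otimes \mathcal{O}_{\overline{X}}$ as an $O_F \otimes \mathcal{O}_{\overline{X}}$-module. Consequently the $M$-torsor $\mathcal{T} = \mathrm{Isom}_{O_F \otimes \mathcal{O}_X}(St \otimes \mathcal{O}_X, \omega_A)$ extends to an $M$-torsor $\overline{\mathcal{T}}$ over $\overline{X}$, and we define $\omega^\kappa$ on $\overline{X}$ by the same recipe $\phi_* \mathcal{O}_{\overline{\mathcal{T}}}[\kappa']$. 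This extension agrees with $\omega^\kappa$ on $X$ by construction.

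Third, to promote the construction to Iwahori level, I would define $\overline{X}_{Iw}$ as the relative normalization of $\overline{X}$ inside $X_{Iw}$. The morphism $X_{Iw} \to X$ is finite, being the moduli space of full flags in $A[\pi^+]$ (case $L$) or self-dual flags in $A[\pi]$ (case $U$), and normalization in a finite extension of the function field is again finite; thus $\overline{X}_{Iw} \to \overline{X}$ is finite and $\overline{X}_{Iw}$ is proper over $O_K$. The sheaf $\omega^\kappa$ on $\overline{X}_{Iw}$ is then defined by pullback. Alternatively one could invoke Lan's work on compactifications with parahoric level, which constructs $\overline{X}_{Iw}$ directly together with a degeneration description of the flags along the boundary. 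The main subtlety, which I would treat as a black box, is understanding how the subgroups $H_\bullet$ degenerate near the cusps into subgroups of the toric and abelian parts of $\mathcal{A}$; the normalization construction circumvents this by working purely at the level of schemes, and suffices for our applications because all we need is a proper scheme containing $X_{Iw}$ as an open dense subscheme on which $\omega^\kappa$ is defined.
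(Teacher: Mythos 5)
Your proposal is correct and follows essentially the same route as the paper, whose proof is a citation: Lan's construction \cite{La} for $\overline{X}$, and \cite{La2} section 3 (equivalently the normalization argument, as in \cite{Bi1} section 5.1) for $\overline{X}_{Iw}$. Your explicit unwinding of the extension of $\omega^\kappa$ via the semi-abelian scheme and the torsor $\overline{\mathcal{T}}$ is the standard mechanism implicit in those references, and your two alternatives for the Iwahori level are precisely the two sources the paper points to.
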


The construction of the compactification of $X$ has been done in \cite{La}, and the one for $X_{Iw}$ follows from \cite{La2} section $3$ (see also \cite{Bi1} section $5.1$).
One can also construct the minimal compactification $X_{Iw}^*$ of $X_{Iw}$. The Koecher's principle states that, under a certain condition, the sections on $X_{Iw}$ automatically extend to the toro\"idal compactification. The following proposition follows from \cite{La2} Theorem $8.7$ (see also \cite{Bi1} section $5.2$).

\begin{prop} 
Suppose that the codimension of the boundary of $X_{Iw}^*$ is greater than $2$. Then for any $O_K$-algebra $R$ the restriction map
$$H^0( \overline{X}_{Iw} \times R, \omega^\kappa) \to H^0( X_{Iw} \times R, \omega^\kappa)$$
is an isomorphism.
\end{prop}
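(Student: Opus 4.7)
The plan is to reduce the statement to a Hartogs-type extension on the minimal compactification. Let $\pi : \overline{X}_{Iw} \to X_{Iw}^*$ denote the canonical proper surjection contracting the toroidal boundary to the boundary of the minimal compactification; it is an isomorphism over the open stratum $X_{Iw}$. I would set $\omega^{\kappa,*} := \pi_* \omega^\kappa$, a coherent sheaf on $X_{Iw}^*$ whose restriction to $X_{Iw}$ is $\omega^\kappa$, and verify that the natural map
$$H^0(\overline{X}_{Iw} \times R,\, \omega^\kappa) \longrightarrow H^0(X_{Iw}^* \times R,\, \omega^{\kappa,*})$$
is an isomorphism for any $O_K$-algebra $R$. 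This step relies on the fact, proved in \cite{La2} and transposed to the Iwahori situation in \cite{Bi1}, that $\pi$ has geometrically connected fibres and that the formation of $\pi_* \omega^\kappa$ is compatible with base change in $R$.

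Once this reduction is made, the restriction map of the proposition is identified with the canonical map $H^0(X_{Iw}^* \times R,\, \omega^{\kappa,*}) \to H^0(X_{Iw} \times R,\, \omega^\kappa)$, and I would show it is an isomorphism by a Hartogs-type argument. The scheme $X_{Iw}^*$ is normal (a theorem of Lan), and the extended sheaf $\omega^{\kappa,*}$ is reflexive, hence satisfies Serre's condition $S_2$ on $X_{Iw}^*$. Since the boundary $X_{Iw}^* \setminus X_{Iw}$ has codimension at least $2$ by hypothesis, any section over $X_{Iw} \times R$ then extends uniquely to $X_{Iw}^* \times R$; injectivity is immediate from torsion-freeness of $\omega^{\kappa,*}$ and the density of $X_{Iw}$ in $X_{Iw}^*$.

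The main obstacle will be the base-change behaviour of $\omega^{\kappa,*}$ over arbitrary, possibly non-flat, $O_K$-algebras $R$: the naive Hartogs argument works cleanly only over a flat normal base, and extending it requires the refined structural information about the minimal compactification recorded in \cite{La2}, in particular cohomology and base change for $\pi_*$ together with the local description of the boundary via Fourier--Jacobi expansions. In practice, the bulk of the proof consists in checking that the hypotheses of Theorem $8.7$ of \cite{La2} are met in our Iwahori-level setting and invoking it, which is why the author chooses to cite rather than reprove the result.
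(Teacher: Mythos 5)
Your proposal is consistent with what the paper actually does: the paper's entire ``proof'' is the one-line citation that the statement follows from \cite{La2} Theorem $8.7$ (with \cite{Bi1} section $5.2$ handling the Iwahori-level transposition), and you correctly conclude that the bulk of the work is in invoking that theorem. Your sketch of the intermediate steps — push forward to the minimal compactification, identify $H^0(\overline{X}_{Iw}\times R,\omega^\kappa)$ with $H^0(X_{Iw}^*\times R, \pi_*\omega^\kappa)$, then extend across a codimension-$\geq 2$ boundary — reflects the underlying mechanism of Lan's proof, and you rightly flag base change over non-flat $R$ as the genuine obstruction to a naive Hartogs argument. One small caution: the reflexivity (or $S_2$ property) of $\pi_*\omega^\kappa$ on $X_{Iw}^*$ is itself a nontrivial theorem rather than a formal consequence of pushing forward a locally free sheaf, and Lan's actual proof proceeds via formal Fourier--Jacobi expansions along the boundary strata rather than a direct depth argument, precisely because the depth-theoretic route does not obviously survive base change; your final paragraph correctly absorbs this into the appeal to \cite{La2}.
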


From this, we deduce a rigid Koecher's principle (under the same dimension assumption) : 
$$H^0( \overline{X}_{Iw,rig}, \omega^\kappa) \simeq H^0( X_{Iw,rig}, \omega^\kappa)$$
where $\overline{X}_{Iw,rig}$ is the rigid space associated to $\overline{X}_{Iw}$. Finally, since $\overline{X}_{Iw}$ is proper, we have a GAGA theorem.

\begin{prop}
The analytification morphism
$$H^0( \overline{X}_{Iw} \times K, \omega^\kappa) \to H^0( \overline{X}_{Iw,rig}, \omega^\kappa)$$
is an isomorphism.
\end{prop}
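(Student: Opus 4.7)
The plan is to deduce the statement from the rigid-analytic GAGA theorem for proper schemes over a non-archimedean field, originally due to K\"opf. The three ingredients I need are: (a) the properness of $\overline{X}_{Iw}$ over $O_K$, already recorded in the preceding proposition; (b) the coherence (in fact local freeness of finite rank) of $\omega^\kappa$ on $\overline{X}_{Iw}$, which is immediate from its construction as the $\kappa'$-isotypic subsheaf of $\phi_*\mathcal{O}_\mathcal{T}$ attached to the $M$-torsor $\mathcal{T}$, extended across the toroidal boundary together with the universal semi-abelian scheme; and (c) the identification of the rigid space $\overline{X}_{Iw,rig}$ with the rigid analytification $(\overline{X}_{Iw}\times_{O_K} K)^{\mathrm{an}}$ of the algebraic generic fibre.

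The first step is to verify (c). By construction $\overline{X}_{Iw,rig}$ is the Berthelot generic fibre of the formal completion of $\overline{X}_{Iw}$ along its special fibre. Because $\overline{X}_{Iw}$ is proper over $\mathrm{Spec}\, O_K$, this formal completion is a proper admissible formal $O_K$-scheme, and its Berthelot generic fibre is canonically identified with the rigid analytification of $\overline{X}_{Iw}\times_{O_K} K$; this is the standard comparison in \cite{Be} in the proper case.

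With (a)--(c) in hand, the statement reduces to the degree-zero case of rigid GAGA: for any proper $K$-scheme $Y$ and any coherent sheaf $\mathcal{F}$ on $Y$, the natural map
$$H^i(Y,\mathcal{F}) \longrightarrow H^i(Y^{\mathrm{an}},\mathcal{F}^{\mathrm{an}})$$
is an isomorphism for every $i\geq 0$, and the analytification functor is an equivalence between coherent $\mathcal{O}_Y$-modules and coherent $\mathcal{O}_{Y^{\mathrm{an}}}$-modules. Applying this to $Y=\overline{X}_{Iw}\times_{O_K} K$ and $\mathcal{F}=\omega^\kappa$ in degree $i=0$ yields exactly the displayed isomorphism.

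There is no genuine obstacle in this argument: the substantive input is the properness of $\overline{X}_{Iw}$, which is precisely what makes rigid GAGA applicable and which has already been supplied by the construction of the toroidal compactification. The only point that merits a sanity check is the comparison (c) between the formally-constructed rigid space $\overline{X}_{Iw,rig}$ and the analytification of the algebraic generic fibre; once properness is known this is automatic, and in particular no further hypothesis on $\kappa$ or on the PEL data enters.
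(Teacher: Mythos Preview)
Your proof is correct and is essentially the same approach as the paper's, which simply invokes rigid GAGA for the proper scheme $\overline{X}_{Iw}$ without further elaboration. Your additional verification of point (c), identifying the Berthelot generic fibre with the analytification via properness, is a welcome sanity check that the paper leaves implicit.
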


To conclude, we need to make explicit the dimension condition. We have the following cases.

\begin{itemize}
\item If there exists $\sigma$ such that $a_\sigma b_\sigma =0$, then the varieties $X$ and $X_{Iw}$ are compact (\cite{La}, remark $5.3.3.2$).
\item If it is not the case, the codimension of the boundary of $X_{Iw}^*$ is equal to $d(a+b-1)$. Indeed, the dimension of the variety is equal to $\sum_{\sigma} a_\sigma b_\sigma$. From \cite{La} theorem $7.2.4.1$, there is a stratification of $X_{Iw}^*$, and any top dimensional strata of the boundary is isomorphic to a Shimura variety with signatures $(a_\sigma - 1, b_\sigma - 1)$, hence has dimension $\sum_\sigma (a_\sigma - 1)(b_\sigma - 1)$. \\
If $d>1$ or $a+b \geq 3$, then the condition for the Koecher's principle is fulfilled.
\item If $d=a=b=1$, we cannot apply the Koecher's principle.
\end{itemize}

Since we have excluded the third case by the hypothesis $\ref{notcurve}$, it follows from what have been said before that we have an isomorphism
$$H^0( \overline{X}_{Iw} \times K, \omega^\kappa) \simeq H^0( X_{Iw,rig}, \omega^\kappa)$$
that is to say that the space $H^0( X_{Iw,rig}, \omega^\kappa)$ consists of classical modular forms. If the vartiety is compact, this is only the GAGA theorem, and in the non-compact case, it is a combination of the rigid Koecher's principle and the GAGA theorem. This concludes the proof of the theorem. \\
In the exceptional remaining case, the variety is essentially a modular curve. To prove the classicality theorem, one has to take the cusps into account in the series constructed. We refer to \cite{Ka} for more details.

\section{Case with several primes above $p$} \label{several}

In this section, we no longer assume that there is only one prime of $F_0$ above $p$. We will define the degree functions, the overconvergent modular forms, and prove the classicality result. \\

Let $\mathcal{P}$ be the set of primes of $F_0$ above $p$. We write $\mathcal{P}=\{\pi_1, \dots, \pi_g \}$. Recall that we have defined the Shimura variety of Iwahori level at $p$ 
$$X_{Iw} = X_{Iw,\pi_1} \times_X X_{Iw,\pi_2} \times_X \dots \times_X X_{Iw,\pi_g}$$

We will denote $X_{Iw,rig}$ the rigid space associated to the scheme $X_{Iw}$. Let $\pi$ be an element of $\mathcal{P}$. In the previous sections, we have defined an integer $s_\pi$, integers $A_{\pi,1} < \dots < A_{\pi,s_\pi}$, other integers $d_{\pi,1}, \dots, d_{\pi,s_\pi}$, together with a degree function

$$Deg_\pi : X_{Iw,\pi,rig} \to \prod_{i=1}^{s_\pi} [0,d_{\pi,i}]$$
We will define the degree function on $X_{Iw,rig}$ by 
\begin{displaymath}
\begin{aligned}
Deg : X_{Iw,rig} & \to & \prod_{\pi \in \mathcal{P}} \prod_{i=1}^{s_\pi} [0,d_{\pi,i}] \\
x & \to & (Deg_{\pi} (p_\pi(x)))_{\pi \in \mathcal{P}}
\end{aligned}
\end{displaymath}
Here $p_\pi$ is the projection $X_{Iw,rig} \to X_{Iw,\pi,rig}$. \\
Let $X_{Iw}^{\mu-ord-mult}$ be the space $Deg^{-1} (\{ d_{\pi,i} \})$. Let us fix a weight $\kappa$.

\begin{defi}
The space of overconvergent modular forms of weight $\kappa$ is defined as 
$$M^\dagger := colim_\mathcal{V} H^0(\mathcal{V}, \omega^\kappa)$$
where $\mathcal{V}$ runs over the strict neighborhoods of $X_{Iw}^{\mu-ord-mult}$ in $X_{Iw,rig}$.
\end{defi}

We have defined Hecke operators $U_{\pi,A_{\pi,1}}, \dots, U_{\pi,A_{\pi,s_\pi}}$. There is a slight ambiguity for the polarization $\lambda'$. If $\pi$ is the only place above $p$, one can take $p \cdot \lambda$, which is a prime to $p$ polarization. In general, $\lambda'$ is not well defined. Let $x$ be a totally positive element of $O_F$ such that $v_{\pi'} (x)$ equals $1$ if $\pi'=\pi$, and $0$ if $\pi'$ is a place of $F_0$ above $p$ different from $\pi$. Then one takes for $\lambda'$ the polarization $x \cdot \lambda$. \\
These operators act both on classical and overconvergent modular forms. We can finally state and prove the classicality theorem.

\begin{theo}
Let $f$ be an overconvergent modular form. Suppose that $f$ is an eigenform for the Hecke operators $U_{\pi,A_{\pi,i}}$, with eigenvalue $\alpha_{\pi,i}$. If we have the relations
$$n_{\pi,A_{\pi,i}} + v(\alpha_{\pi,i}) < \inf_{\sigma \in \Sigma_{\pi,i}} (\kappa_\sigma + \lambda_\sigma)$$
for all $\pi \in \mathcal{P}$, $1 \leq i \leq s_\pi$, then $f$ is classical.
\end{theo}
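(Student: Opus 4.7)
The plan is to reduce the multi-prime statement to iterated applications of the single-prime analytic continuation theorem of Section 3, followed by the same Koecher's principle / GAGA endgame. The essential observation is that the Hecke operator $U_{\pi,A_{\pi,i}}$ is defined by quotienting $A$ by a subgroup of $A[\pi]$ (or $A[\pi^2]$), so for any other prime $\pi'\in\mathcal{P}\setminus\{\pi\}$ above $p$ the projection $A\to A/L$ restricts to an isomorphism on $A[(\pi')^\infty]$. Consequently the action of $U_{\pi,A_{\pi,i}}$ on $X_{Iw,rig}$ leaves each function $Deg_{\pi',k}$ unchanged, and the norm computation bounding $\|\alpha_{\pi,i}^{-1}U_{\pi,i}^{bad}\|$ only sees the $\pi$-component of the weight and of the subgroups involved. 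In particular Propositions \ref{minorate}, \ref{minorate_u} and the bound of Proposition \ref{bound} go through word for word, with the $\Sigma$ appearing in them replaced by the local $\Sigma_\pi$.

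Starting from the overconvergent form $f$, defined on some
$$\mathcal{V}_0 \;=\; \bigcap_{\pi\in\mathcal{P}}\bigcap_{i=1}^{s_\pi} Deg_{\pi,i}^{-1}([d_{\pi,i}-\varepsilon,d_{\pi,i}]),$$
I would fix an ordering $\pi_1,\dots,\pi_g$ of $\mathcal{P}$ and treat the primes one at a time. For the step at $\pi_j$, the form is already defined on a strict neighborhood in which the degree constraints at $\pi_1,\dots,\pi_{j-1}$ have already been relaxed (i.e.\ $\varepsilon_{\pi,k}=d_{\pi,k}$ there) while those at $\pi_j,\dots,\pi_g$ still read $\varepsilon_{\pi,k}=\varepsilon$. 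Inside this step one runs exactly the proof of Theorem \ref{final_theo}: order the indices $i$ at the prime $\pi_j$ by decreasing value of $K_{\pi_j,i}=\inf_{\sigma\in\Sigma_{\pi_j,i}}(\kappa_\sigma+\lambda_\sigma)$, and apply the analytic continuation theorem of Section \ref{analytic} successively to $U_{\pi_j,A_{\pi_j,i_1}},\dots,U_{\pi_j,A_{\pi_j,i_{s_{\pi_j}}}}$. The classicality hypothesis $n_{\pi_j,A_{\pi_j,i}}+v(\alpha_{\pi_j,i})<K_{\pi_j,i}$ is exactly what is needed, for $\varepsilon$ sufficiently small, to guarantee $\|\alpha_{\pi_j,i}^{-1}U_{\pi_j,i}^{bad}\|<1$. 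After iterating $g$ times, the form extends to all of $X_{Iw,rig}$.

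Once $f\in H^0(X_{Iw,rig},\omega^\kappa)$, classicality is obtained exactly as in Section \ref{compact}: Hypothesis \ref{notcurve} ensures the codimension condition for the rigid Koecher's principle, and combining it with the GAGA isomorphism $H^0(\overline{X}_{Iw}\times K,\omega^\kappa)\simeq H^0(\overline{X}_{Iw,rig},\omega^\kappa)$ shows that $f$ is a classical modular form on $X_{Iw}$. The main obstacle, and essentially the only new ingredient compared with the single-prime case, is to make rigorous the orthogonality of the different prime components: one must check that every ingredient of Section 3 (the decomposition theorem \ref{decompo}, the good/bad dichotomy, the lemmas \ref{technic_l} and \ref{technic_u} controlling partial degrees, and the norm estimate on $U_{\pi,i}^{bad}$) can be formulated over an arbitrary strict neighborhood in the $\pi'$-directions for $\pi'\neq\pi$, and that the resulting series glue correctly with the previously extended form. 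This is a purely formal verification, since $\omega^\kappa$ decomposes as a tensor product of line bundles indexed by $\sigma\in\bigsqcup_\pi\Sigma_\pi$ and the isogeny $A\to A/L$ acts as the identity on the factors corresponding to $\sigma\notin\Sigma_\pi$.
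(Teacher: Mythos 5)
Your proposal is correct and follows essentially the same strategy as the paper: treat the primes $\pi_1,\dots,\pi_g$ one at a time, applying the single-prime analytic continuation result of Section 3 at each, and finish with the Koecher's principle and GAGA argument from Section \ref{compact}. You make explicit what the paper leaves implicit — namely that $U_{\pi,A_{\pi,i}}$ preserves $Deg_{\pi',k}$ for $\pi'\neq\pi$ and that the norm estimate only involves the $\pi$-component of the weight and of the isogeny — which is the reason the iteration is independent of the ordering of the primes.
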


We recall that $n_{\pi,i}$ is the constant term of the normalization factor for the Hecke operator $U_{\pi,i}$, and that $\Sigma_{\pi,i} = \{ \sigma \in \Sigma_\pi, a_\sigma = A_{\pi,i} \}$.

\begin{proof}
We will use for each place $\pi$ the analytic continuation theorem we got in the previous section to extend the modular form to the whole rigid variety, and deduce its classicality. Luckily, the order of the places $\pi$ is not important here. \\
We start with an overconvergent form $f$. It is a section of $\omega^\kappa$ defined on a set of the form
$$Deg^{-1} ( \prod_{\pi \in \mathcal{P}} \prod_{i=1}^{s_\pi} [d_{\pi,i} - \varepsilon, d_{\pi,i}] )$$
First, we consider the place $\pi_1$. Using the Hecke operators, and the relations satisfied by the eigenvalues, by the previous section, one can extend $f$ to 
$$Deg^{-1} ( \prod_{i=1}^{s_{\pi_1}} [0, d_{\pi_1,i}]   \times \prod_{\pi \neq \pi_1} \prod_{i=1}^{s_\pi} [d_{\pi,i} - \varepsilon, d_{\pi,i}] )$$
Using successively this argument with the different places, one can extend $f$ to the whole  rigid variety $X_{Iw,rig}$. Using a Koecher's principle (if the variety is not compact), and a GAGA theorem as in the section $\ref{compact}$, one can prove that $f$ is a classical modular form.
\end{proof}

\bibliographystyle{amsalpha}

\end{document}